\definecolor{webgreen}{rgb}{0,0,0.6}
\definecolor{webbrown}{rgb}{0.6,0,0}
\newtheorem{theorem}{Theorem}[section]
\newtheorem{conjecture}[theorem]{Conjecture}
\newtheorem{corollary}[theorem]{Corollary}
\newtheorem{definition}[theorem]{Definition}
\newtheorem{lemma}[theorem]{Lemma}
\newtheorem*{notation}{Notation}
\newtheorem{proposition}[theorem]{Proposition}
\newtheorem{remark}[theorem]{Remark}
\newtheorem{question}{Question}
\begin{document}  

\label{'ubf'}  
\setcounter{page}{1}                                 


\begin{center}
{ 
       {\Large \textbf { \sc 
    Combinatorial properties of sparsely totient numbers
                               }
       }
\\

\medskip

{\sc  Mithun Kumar Das, Pramod Eyyunni and Bhuwanesh Rao Patil} \\
{Harish-Chandra Research Institute,\\ HBNI, Chhatnag Road, Jhunsi, Allahabad - 211019, Uttar Pradesh, India.}\\

{\footnotesize e-mail: {\it mithundas@hri.res.in; pramodeyy@gmail.com; bhuwanesh1989@gmail.com}}
}
\end{center}

\hrulefill

\begin{abstract}  
 Let $N_1(m)=\max\{n \colon \phi(n) \leq m\}$ and $N_1 = \{N_1(m) \colon m \in \phi(\mathbb{N})\}$
where $\phi(n)$ denotes the Euler's totient function. 
Masser and Shiu  \cite{masser} call the elements of $N_1$ as `sparsely totient numbers' and initiated the study of these numbers. In this article,
we establish several results for sparsely totient numbers. First, we show that a squarefree integer divides all sufficiently large 
sparsely totient numbers and a non-squarefree integer divides infinitely many sparsely totient numbers.  Next, we construct explicit infinite families
of sparsely totient numbers and describe their relationship with the distribution of consecutive primes. We also study the sparseness of $N_1$ and prove that 
it is multiplicatively piecewise syndetic but not additively piecewise syndetic. Finally, we investigate arithmetic/geometric progressions and
other additive and multiplicative patterns like $\{x, y, x+y\}, \{x, y, xy\}, \{x+y, xy\}$ and their generalizations in the  sparsely totient numbers.

 \end{abstract}

\hrulefill

{\small \textbf{Keywords: Euler's function, Sparsely totient numbers, IP Set, piecewise syndetic set} }

\indent {\small {\bf 2010 Mathematics Subject Classification: Primary 11B75; Secondary  11A41, 11A25, 11B05, 11B25} }


\section{Introduction}\label{introduction}
Euler's totient function $\phi(n),$ which enumerates the number of positive integers which are co-prime to 
and less than or equal to $n,$ is a classical arithmetical function. It is a well known fact that the
number of solutions to the equation $\phi(x) = m$ is finite for each $m \in \mathbb{N}$ ($\mathbb{N}$ is the set of positive integers).
It is natural, then, to ask the following question: 
 \begin{question}
For a given $m \in \mathbb{N},$ what is the largest integer $n$ such that $\phi(n) \leq m$?
 \end{question}
We denote the set $\{x : \phi(x) = m\}$ by $\phi^{-1}(m)$ and the image of $\phi$ by $V,$ i.e.
$V = \{\phi(m) : m \in \mathbb{N}\}.$ The elements of $V$ are called \textbf{totients}.
For $m \in V,$ we define the following quantities with the above question in view:
\begin{align*}
 N_1(m) &= \max\{x : \phi(x) \leq m\} \ \text{ and } \
 N_1 = \{N_1(m) : m \in V\} .
\end{align*}
Note that $N_1(m)$ can be defined on the whole of $\mathbb{N}.$
But this doesn't contribute any new elements to the image $N_1$ of $N_1(m),$ since $N_1(m) = N_1(m - 1)$ if $m \notin V.$ Hence,
from here on, we study $N_1(m)$ only for $m \in V.$ 
\begin{definition}[Sparsely totient number]
 An element of $N_1$ is called a \textbf{sparsely totient number}.
\end{definition}

 D. W. Masser and P. Shiu initiated the study of sparsely totient numbers and described  several of their properties. Here we will focus on describing
 prime divisors of sparsely totient numbers, constructing infinite families of sparsely totient numbers, understanding the  relationship of sparsely totient
 numbers with primes in short intervals and searching for interesting configurations involving addition as well as multiplication in the sparsely totient numbers.

In Section \ref{section2}, we study some divisibility properties of sparsely totient numbers. It follows from the definition of $N_1(m)$
that $N_1$ is an infinite set. 
Masser and Shiu  \cite{masser} 
showed 
that given a prime $p,$ all large enough elements of $N_1$ are divisible by $p.$ This leads to the first theorem in this paper
where we give an alternate proof of this result. 

\begin{theorem}\label{t1} 
Let $n\in \mathbb{N}.$ Then the following
are equivalent:
\begin{enumerate}[label=(\roman*)]
 \item  n is a squarefree integer.
\item There exists $M,$ depending on $n,$ such that $N_1(m) \equiv 0 \pmod{n}$ for all $m > M.$
\end{enumerate}

\end{theorem}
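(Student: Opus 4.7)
The plan is to prove the two implications separately. For (i) $\Rightarrow$ (ii), I first reduce to the case $n = p$ prime: if for each $p_i \mid n$ one has $p_i \mid N_1(m)$ whenever $m > M_i$, then for squarefree $n = p_1 \cdots p_k$ one has $n \mid N_1(m)$ for all $m > \max_i M_i$. For a fixed prime $p$, I argue by contradiction, supposing that $N = N_1(m)$ satisfies $p \nmid N$ for arbitrarily large $m$, and producing an integer $N' > N$ with $\phi(N') \leq \phi(N) \leq m$, which contradicts the maximality defining $N_1(m)$.

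The contradictory $N'$ is obtained by a swap. Let $P = P(N)$ denote the largest prime factor of $N$; by the Masser--Shiu lower bounds, $P \to \infty$ with $m$. Assuming $P \| N$, I would set $N' := Npr/P$ where $r$ is a suitably chosen prime. A direct calculation gives $N'/N = pr/P$ and $\phi(N')/\phi(N) = (p-1)(r-1)/(P-1)$, so the required conditions reduce to finding a prime $r$, distinct from $p$ and coprime to $N/P$, in the interval $\bigl(P/p,\; 1 + (P-1)/(p-1)\bigr]$. This interval has length of order $P/(p(p-1))$, and effective versions of the prime number theorem (or Nagura's theorem for small $p$) guarantee it contains many primes once $P$ is large enough, with only the finitely many prime divisors of $N/P$ excluded. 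The parallel case $P^2 \mid N$ is handled by an analogous computation.

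For (ii) $\Rightarrow$ (i), I argue by contrapositive: if $n$ is not squarefree then $p^2 \mid n$ for some prime $p$, and it suffices to exhibit infinitely many $N \in N_1$ with $p^2 \nmid N$. A natural route is to show that the primorial $P_k := p_1 p_2 \cdots p_k$ belongs to $N_1$ for all sufficiently large $k$; since $P_k$ is squarefree, this immediately yields the desired family. To verify $P_k \in N_1$, I would suppose for contradiction that some $N > P_k$ has $\phi(N) \leq \phi(P_k)$; then $N/\phi(N) > P_k/\phi(P_k) = \prod_{i \leq k} p_i/(p_i - 1)$, and combining this with Mertens-type estimates and the well-known extremal property that $N/\phi(N)$ is maximized on primorials should force $N \leq P_k$, a contradiction.

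The principal obstacle is in the converse direction: rigorously verifying that primorials are sparsely totient requires ruling out \emph{every} competing integer $N > P_k$ whose prime factorization may differ substantially from that of $P_k$, not merely those obtained from $P_k$ by a simple swap. If this direct approach proves intractable, an alternative is to invoke Masser--Shiu's structural results bounding the exponent $v_p(N)$ for $N \in N_1$ and then extract, by a pigeonhole-type argument along the infinite sequence $N_1$, a subsequence with $v_p(N) = 1$, which likewise gives infinitely many $N \in N_1$ with $p^2 \nmid N$.
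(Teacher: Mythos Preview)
Your plan for (i)$\Rightarrow$(ii) is a genuine alternative to the paper's. The paper does \emph{not} swap out the largest prime $P$; instead it shows directly that if $p\nmid N\in N_1$ then $v_q(N)$ is bounded for every prime $q$ (Propositions~\ref{p1} and~\ref{p2}), so only finitely many $N$ can occur. Your swap $N\mapsto Npr/P$ is closer in spirit to Proposition~\ref{p1detail} but organized differently. Two issues deserve attention.

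First, the coprimality condition on $r$ is a red herring, and as stated it creates a real gap. You require $r$ coprime to $N/P$ and then assert that the interval $\bigl(P/p,\,1+(P-1)/(p-1)\bigr]$ contains ``many primes'' while only ``finitely many'' are excluded. But $\omega(N/P)$ can be of order $\log N/\log\log N$, and the number of primes in your interval is of order $P/(p^2\log P)$; since Masser--Shiu give $P\asymp\log N$, these two quantities are of the \emph{same} order, so the comparison is not automatic. The fix is simply to drop the coprimality requirement: if $r\mid N/P$ then the same swap gives $\phi(N')/\phi(N)=(p-1)r/(P-1)$, so any prime $r\in\bigl(P/p,\,(P-1)/(p-1)\bigr]$ works regardless. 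Existence of such a prime for large $P$ then follows from any effective Bertrand-type result, since the ratio of endpoints tends to $p/(p-1)>1$.

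Second, your appeal to Masser--Shiu for $P(N)\to\infty$ is not circular, but it does import the prime number theorem, which the paper explicitly sets out to avoid. In fact $P(N)\to\infty$ follows elementarily from $\sum 1/p=\infty$: if $P(N)\le P_0$ then $N/\phi(N)\le\prod_{q\le P_0}q/(q-1)=:C$, whereas comparing $N_1(m)$ with primorials shows $N_1(m)/m\to\infty$, forcing $N_1(m)>Cm\ge C\phi(N_1(m))$ eventually.

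For (ii)$\Rightarrow$(i) you have the right target --- primorials lie in $N_1$ --- but you misjudge the difficulty. You call it ``the principal obstacle'' and float a pigeonhole fallback, yet the paper dispatches it in a few lines: if $y>Z_p:=\prod_{q\le p}q$ with $\phi(y)\le\phi(Z_p)$, one first gets $|W(y)|\le|W(Z_p)|$, and since every prime in $W(y)\setminus W(Z_p)$ exceeds $p$ an injection argument gives $\prod_{q\in W(y)}\frac{q-1}{q}\ge\prod_{q\le p}\frac{q-1}{q}$, whence $\phi(y)/\phi(Z_p)\ge y/Z_p>1$, a contradiction. No Mertens estimates or extremal principles are needed; the ``well-known extremal property'' you invoke does not by itself rule out $N>P_k$ with $N/\phi(N)>P_k/\phi(P_k)$ (e.g.\ $N=P_{k+1}$), so your sketch as written is incomplete.
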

In their proof, Masser and Shiu studied the smallest prime not dividing $N_1(m)$ and proved that it tends to infinity as
$m \rightarrow \infty.$ To achieve this, they showed the following using the prime number theorem:
\begin{displaymath}
 \liminf_{n \in N_1} \frac{Q(n)}{\log n}\geq \sqrt{2} - 1, 
\end{displaymath}
where $Q(n)$ is the smallest prime factor not dividing $n.$ We give an alternate proof of Theorem \ref{t1} without using the prime number 
theorem. This proof  also gives some quantitative information about prime divisors of sparsely totient numbers in specific cases which is described below.
We prove the following two propositions from which the `sufficient' part
of Theorem \ref{t1} follows as a consequence. For this, we need the concept of $q$-valuations of an integer $n,$ namely, $v_q(n)$(see Definition \ref{valuation}).

\begin{proposition}\label{p1}
Suppose that $p$ and $q$ are primes such that $q>p.$ Then there exists $C_p(q)\in \mathbb{N}\cup \{0\}$ such that
for each $N\in N_1$ with $N\not\equiv 0 \pmod{p},$ we have  $v_q(N)\leq C_p(q).$ In fact, we have the following upper bounds for $C_p(q)$:
\begin{table}[ht]
\begin{center}
\begin{tabular}{|c|c|c|c|}
\hline
Prime $q>$& $p$ &$\displaystyle\frac{1}{2}\left(p+\sqrt{p^2+4(p-1)^2}\right)$&$ p(p-1) $\\ 
\hline
$C_p(q)\leq $ & $2$&$1$&$0$\\
 \hline
\end{tabular} 
\caption{}\label{table1}
\end{center}
\end{table}
\end{proposition}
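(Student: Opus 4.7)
The plan is to proceed by contradiction. Suppose $N \in N_1$ with $p \nmid N$ and $v_q(N) = k$ exceeds the claimed upper bound $C_p(q)$. Writing $N = q^k L$ with $\gcd(qL, p) = 1$ and $\gcd(q, L) = 1$, the task is to construct $N' > N$ with $\phi(N') \leq \phi(N)$, which would contradict the maximality definition $N = N_1(\phi(N))$.

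The primary family of substitutions to consider is $N' := p^a q^{k-j} L$ for positive integers $a \geq 1$ and $1 \leq j \leq k$. Since $\gcd(p, qL) = 1$, $\phi$ factors multiplicatively, and an elementary computation yields $N' > N \iff p^a > q^j$, while $\phi(N') \leq \phi(N)$ is equivalent to $p^{a-1}(p-1) \leq q^j$ (when $j < k$) or to $p^{a-1}(p-1) \leq q^{k-1}(q-1)$ (when $j = k$).

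For each row of Table~\ref{table1} I would specialize this framework. For $C_p(q) \leq 0$ (case $q > p(p-1)$) I would set $k = 1$, $j = 1$, $a = 2$: the two conditions become $p^2 > q$ and $p(p-1) \leq q - 1$, compatible precisely when $p(p-1) < q < p^2$. For $q \geq p^2$ I would iterate with $a = \lfloor \log_p q \rfloor + 1$ or fall back on a combined substitution of the form $N' = p^a r L$ with an auxiliary prime $r$. For $C_p(q) \leq 1$ (case $q > \alpha_p$) I would set $k = 2$ and deploy a blend of the partial substitution ($j = 1$), the full-replacement substitution ($j = k = 2$), and the combined substitution $N' = p^a r L$ with $r$ prime; the threshold $\alpha_p$, being the positive root of $q(q-p) = (p-1)^2$, should emerge naturally when one equates the resulting size and $\phi$-inequalities. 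For $C_p(q) \leq 2$ (the weakest hypothesis $q > p$) I would set $k = 3$; with the added flexibility of choosing $j \in \{1, 2, 3\}$, at least one substitution is forced to succeed.

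The principal obstacle is the scenario in which no substitution of the pure form $N' = p^a q^{k-j} L$ works at all. For instance, with $p = 3$, $q = 11$, $k = 1$, no integer $a$ satisfies both $3^a > 11$ and $2 \cdot 3^{a-1} \leq 10$. In such cases one must enlarge the admissible family to include auxiliary prime factors $r$, leading to substitutions of the form $N' = p^a r L$ or $N' = p^a q^{k-j} r L$, and prove the existence of such $r$ with $r \nmid L$ drawn from an explicit short window of the form $(q/p^a, (q-1)/(p^{a-1}(p-1)) + 1]$. This typically requires either a Bertrand-type density statement for primes in that window or an argument exploiting the structural constraints on $L$ imposed by $N$ being sparsely totient. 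Isolating the exact threshold $\alpha_p$ in the middle row --- verifying that the combined substitution succeeds for all primes $q > \alpha_p$ but fails at the boundary --- will constitute the most delicate computation.
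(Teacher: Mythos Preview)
Your framework of replacing $q^j$ by a pure prime power $p^a$ is too rigid, and you have correctly identified the failure mode yourself: for $p=3$, $q=11$, $k=1$ there is no exponent $a$ with $3^a>11$ and $2\cdot 3^{a-1}\le 10$. Your proposed remedy --- adjoining an auxiliary prime $r\nmid L$ drawn from a short window --- is not a proof but a hope: you would need to guarantee such an $r$ exists, and since $L$ can absorb arbitrarily many small primes, no simple Bertrand-type bound will suffice without further input. The ``structural constraints on $L$ imposed by $N$ being sparsely totient'' that you allude to are exactly what the proposition is trying to establish, so invoking them here is circular.

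The paper's construction sidesteps this entirely with a single idea you are missing: instead of replacing $q^r$ by a prime power $p^a$, replace it by the smallest \emph{multiple} of $p$ exceeding $q^r$, namely $n=q^r+k$ with $k\in[1,p-1]$ chosen so that $p\mid n$. One then sets $y=STn\,q^{v_q(N)-r}$, where the prime factors of $N$ other than $q$ are split as $S$ (coprime to $n$) and $T$ (dividing $n$). Because $p\mid n$ and $p\nmid N$, the estimate $\phi(Tn)\le \phi(T)\frac{p-1}{p}n$ holds, and since $n\le q^r+p-1$ the whole argument reduces to the elementary inequality $(p-1)(q^r+p-1)\le p\,q^{r-1}(q-1)$, i.e.\ $(p-1)^2\le q^{r-1}(q-p)$. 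The three thresholds in the table drop out immediately from this single inequality at $r=1,2,3$. In your problematic example one takes $n=12$, and the construction works with no case analysis and no auxiliary primes.
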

\vspace*{-1cm}
\begin{proposition}\label{p2}
Suppose that $p$ and $q$ are primes such that $q<p.$ Then there exists $D_p(q)\in \mathbb{N}\cup \{0\}$ such that for
$N\in N_1$  with each prime factor less than $p,$ we have   $ v_q(N)\leq D_p(q).$
\end{proposition}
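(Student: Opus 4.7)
Like the argument for Proposition \ref{p1}, the plan exploits maximality: since $N = N_1(\phi(N))$, any integer $N^* > N$ must satisfy $\phi(N^*) > \phi(N)$. I will show that if $v_q(N) = a$ exceeds an explicit threshold $D_p(q)$ depending only on $p$ and $q$, one can construct $N^* > N$ with $\phi(N^*) \le \phi(N)$, contradicting $N \in N_1$. The bound $D_p(q)$ is then the largest $a$ for which no such construction exists.

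The natural swap is $N^* = (N/q^a) \cdot r_1 r_2 \cdots r_t$, where $r_1 < \cdots < r_t$ are distinct primes with each $r_i \ge p$. The hypothesis that every prime factor of $N$ is less than $p$ guarantees that each $r_i$ is coprime to $N/q^a$, so the factorization is genuine. A direct computation of $\phi$ gives
\begin{align*}
N^* > N \iff \prod_{i=1}^{t} r_i > q^a, \qquad \phi(N^*) \le \phi(N) \iff \prod_{i=1}^{t}(r_i - 1) \le q^{a-1}(q-1),
\end{align*}
which, upon dividing, collapse into the single density inequality $\prod_{i=1}^{t}(1 - 1/r_i) < 1 - 1/q$.

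The task then is to exhibit, for all sufficiently large $a$, primes $r_i \ge p$ satisfying both the product bound and the density bound. Since $1 - 1/r_i > 1 - 1/q$ for every $r_i \ge p > q$, the density condition forces $t$ to be large; by elementary prime distribution (iterated Bertrand's postulate together with Mertens-type estimates, and without appealing to the full prime number theorem) one can drive $\prod(1-1/r_i)$ below $1 - 1/q$ by taking enough primes in a suitable interval $[p, M]$. One then tunes $t$, $M$, and the individual $r_i$'s so that $\prod r_i$ just exceeds $q^a$ while $\prod(r_i - 1)$ remains at most $q^{a-1}(q-1)$.

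The main obstacle I anticipate is arranging a uniform construction: for values of $a$ close to $D_p(q)$, the admissible window for $\prod r_i$ is narrow and the basic swap may fail. One then enlarges the swap family by allowing simultaneous small perturbations of the exponents of other primes $< p$ dividing $N$, as in the elementary modification $2 \cdot 3^5 \to 2^2 \cdot 3 \cdot 41$ (where $\phi$ drops from $162$ to $160$ while the number strictly increases). This extra flexibility is what pins down the explicit value of $D_p(q)$ and makes the swap available uniformly once $a$ is large enough.
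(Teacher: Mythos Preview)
Your overall strategy---replace a high power of $q$ by a product of primes $\ge p$---matches the paper's, but you have made the construction harder than necessary by stripping out \emph{all} of $q^a$. The paper instead removes only a \emph{fixed} power $q^\alpha$ and keeps $q^{v_q(N)-\alpha}$ in the new number: it fixes once and for all $A=\prod_{r\in[p,\beta]\cap\mathbb{P}}r$ with $\phi(A)/A<1/q$ (such $\beta$ exists by the divergence of $\sum_r 1/r$), takes $\alpha$ with $q^\alpha<A<q^{\alpha+1}$, and sets $y=(N/q^{v_q(N)})\cdot q^{v_q(N)-\alpha}\cdot A$. Then $y/N=A/q^\alpha>1$ and $\phi(y)/\phi(N)=\phi(A)/q^\alpha<1$, uniformly for every $v_q(N)>\alpha$. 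No tuning in $a$ is needed, and one simply takes $D_p(q)=\alpha$.

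By contrast, your swap forces $\prod r_i$ to land in the window $\bigl(q^a,\,q^a(1-1/q)\big/\prod(1-1/r_i)\bigr]$, and since $\prod r_i$ is fixed once the $r_i$ are chosen, a single choice hits only a bounded range of $a$'s; you must rebuild the prime set for every $a$. That is exactly the ``uniform construction'' obstacle you flag, and it is the heart of the matter, not a fringe case near $D_p(q)$. Two specific points: your claim that the two inequalities ``collapse'' into the single density inequality is not correct---the density condition $\prod(1-1/r_i)<1-1/q$ is necessary for the window to be nonempty but does not by itself place $\prod r_i$ inside it; and your proposed fix of perturbing exponents of \emph{other} primes below $p$ dividing $N$ is unsafe, since the hypothesis only says every prime factor of $N$ lies below $p$, not that any particular small prime actually divides $N$. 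The paper's device of retaining the residual factor $q^{v_q(N)-\alpha}$ sidesteps all of this in one line.
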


In particular, these propositions give upper bounds for the exponents of prime factors $q$ of those elements of $N_1$ which are not divisible by a
particular prime $p.$ 
 Table \ref{table1} shows that if an element of $N_1$ is not divisible by $p,$ then it cannot be divisible by any prime $q>p(p-1).$ In particular,
this gives us the
following corollary in the case when $p=3.$
\begin{corollary}\label{t1corr1}
 All sparsely totient numbers other than $2,$ are divisible by $3.$ In other words, $N_1(m)$ is divisible by $3$ for $m>1.$ 
\end{corollary}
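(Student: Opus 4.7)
The plan is to combine Proposition \ref{p1} (taking $p=3$) with a short explicit substitution argument. First, I would read off Table \ref{table1} with $p=3$: the third column gives $p(p-1)=6$, so for any $N\in N_1$ coprime to $3$ and any prime $q\geq 7$ we have $v_q(N)=0$; the middle column gives $\tfrac{1}{2}(3+\sqrt{9+16})=4$, so $v_5(N)\leq 1$. Consequently every $N\in N_1$ with $3\nmid N$ must be of the form $N=2^{a}5^{b}$ with $a\geq 0$ and $b\in\{0,1\}$.

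Next, I would eliminate every such $N$ except $N=2$ by exhibiting, in each remaining case, a strictly larger integer $N'$ with $\phi(N')\leq\phi(N)$, which contradicts the maximality in the definition of $N_1$. For $N=2^{a}$ with $a\geq 2$, take $N'=3\cdot 2^{a-1}$; then $\phi(N')=2^{a-1}=\phi(N)$ and $N'>N$. For $N=2^{a}\cdot 5$ with $a\geq 1$, take $N'=3\cdot 2^{a+1}$; then $\phi(N')=2^{a+1}=\phi(N)$ while $N'=6\cdot 2^{a}>5\cdot 2^{a}=N$. The lone remaining case $N=5$ is handled by $N'=12$, for which $\phi(12)=4=\phi(5)$.

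Thus the only sparsely totient number coprime to $3$ is $N=2=N_1(1)$. The corollary then follows because for $m>1$ we have $\phi(3)=2\leq m$, forcing $N_1(m)\geq 3>2$, so the unique $3$-coprime sparsely totient number is excluded and $3\mid N_1(m)$. The main (mild) hurdle is the bookkeeping in the substitution step: Proposition \ref{p2} bounds $v_{2}(N)$ only when every prime factor of $N$ is strictly less than $3$, so the subcase $5\mid N$ cannot be dispatched purely by Proposition \ref{p2} and must be handled by the explicit replacement $N'=3\cdot 2^{a+1}$ above.
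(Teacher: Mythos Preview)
Your proof is correct and follows essentially the same approach as the paper's: use Proposition~\ref{p1} with $p=3$ to restrict $N$ to the form $2^{a}5^{b}$ with $b\in\{0,1\}$, then eliminate all but $N=2$ by explicit replacement. The only cosmetic differences are that the paper observes $a\geq 1$ at the outset (every sparsely totient number is even, since $\phi(2N)=\phi(N)$ for odd $N$), and it uses the single uniform replacement $N'=2^{a-1}\cdot 3\cdot 5^{b}$ for all $a\geq 2$ rather than splitting on $b$; your case-split with $N'=3\cdot 2^{a+1}$ when $b=1$ is equally valid.
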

\noindent
However, in Proposition \ref{p2} where we deal with primes $q < p,$ we have not been able to get explicit values of $D_p(q)$ as above.
We get existence of $D_{p}(q)$ in Proposition \ref{p2} due to the fact that Riemann Zeta function $\zeta(s)$ has a simple pole at $s=1$
[see Question \ref{sectionq1}].

Next,  we study  equations of the type $N_1(x)=N_1(x+1)=\dots=N_1(x+k)$ with $N_1(x-1)<N_1(x)$ and $N_1(x+k)<N_1(x+k+1)$ as an application of the sufficient part of Theorem \ref{t1} and show that $k$
can be arbitrarily large. For this, we define a subset $BN_1$ of $V$ comprising the 
``first'' elements of such  blocks $[x,x+k]\cap \mathbb{N}.$ 
$$ BN_1 := \{m \in V : N_1(m) = \max(\phi^{-1}(m))\}.$$  If $m_1$ and $m_2$ are two consecutive elements in $BN_1,$ then 
$N_1(m)=N_1(m_1)$ for each $m\in [m_1,m_2)\cap\mathbb{N}.$ We get a nice pattern in the set $BN_1$ as follows:
\begin{corollary}\label{t1corr2}
For $n\in \mathbb{N},$ $n$ divides all sufficiently large elements of $BN_1.$
\end{corollary}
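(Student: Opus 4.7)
The plan is to deduce the corollary from Theorem \ref{t1} by combining it with Dirichlet's theorem on primes in arithmetic progressions. The key observation is that membership in $BN_1$ forces the equality $\phi(N_1(m)) = m$ (rather than the weaker inequality $\phi(N_1(m)) \leq m$ that is automatic from the definition of $N_1(m)$); indeed, if $N_1(m) = \max \phi^{-1}(m)$ then $N_1(m)$ itself lies in $\phi^{-1}(m)$. Consequently, divisibility properties of $N_1(m)$ will translate through $\phi$ into divisibility properties of $m$.

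Given $n \in \mathbb{N}$, factor $n = p_1^{a_1} \cdots p_k^{a_k}$. For each $i$, Dirichlet's theorem furnishes a prime $q_i \equiv 1 \pmod{p_i^{a_i}}$, and we may arrange the $q_i$ to be pairwise distinct. Since each $q_i$ is a squarefree integer, Theorem \ref{t1} supplies an integer $M_i$ with $q_i \mid N_1(m)$ for every $m > M_i$. Setting $M := \max_i M_i$, we obtain $q_i \mid N_1(m)$ for all $i$ simultaneously whenever $m > M$.

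Now fix any $m \in BN_1$ with $m > M$. Because $m \in BN_1$, we have $\phi(N_1(m)) = m$. For each $i$, let $e_i := v_{q_i}(N_1(m)) \geq 1$. By multiplicativity of $\phi$, the quantity $\phi(q_i^{e_i}) = q_i^{e_i-1}(q_i-1)$ divides $\phi(N_1(m)) = m$, so $(q_i - 1) \mid m$. Combined with $p_i^{a_i} \mid q_i - 1$, this yields $p_i^{a_i} \mid m$ for each $i$, and since the $p_i^{a_i}$ are pairwise coprime, we conclude $n \mid m$.

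The only ingredient beyond Theorem \ref{t1} is the Dirichlet-theorem trick of selecting primes $q_i \equiv 1 \pmod{p_i^{a_i}}$, which converts the guaranteed prime divisors of $N_1(m)$ into guaranteed prime-power divisors of $m = \phi(N_1(m))$. I do not foresee a real obstacle: once one recognises that $BN_1$ is characterised by the strict equality $\phi(N_1(m)) = m$, the rest of the argument is a direct application of Theorem \ref{t1} together with the multiplicativity of $\phi$.
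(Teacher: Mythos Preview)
Your proof is correct and follows essentially the same approach as the paper: observe that $m \in BN_1$ forces $\phi(N_1(m)) = m$, invoke the existence of a prime congruent to $1$ modulo a given modulus, and then feed Theorem~\ref{t1} through $\phi$. The only difference is that the paper avoids the prime-power decomposition by choosing a single prime $p \equiv 1 \pmod{n}$ directly (via Lemma~\ref{simpledirichlet}), which immediately gives $n \mid (p-1) \mid \phi(N_1(m)) = m$; your per-factor argument is valid but unnecessary.
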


The `necessary' part of Theorem \ref{t1} follows from the observation that $\prod_{q \leq p}q \in N_1.$ In Section \ref{section_construction_stn}, we generalize
this observation to construct  infinite families of sparsely totient numbers which are defined as follows:
\begin{definition}\label{family_name}
Let $\mathbb{P}$ be the set of primes and $\mathcal{O}_p=(p,\infty)\cap\mathbb{P}.$ If $p \in \mathbb{P}, n, k \in \mathbb{N}$ and $p_i$ is the $i\text{th}$ smallest prime greater
than $p,$ we define
$$X_{n, p} := \left(n\prod_{q \mid n, q \in \mathbb{P}} q^{-1}\right)\prod_{q \leq p, q \in \mathbb{P}}q, \ \ \
Y_{p, k} := \prod_{\substack{q \in [2, p_k]\cap\mathbb{P} \\ q \neq p}}q, \ \  D(p):= \frac{(p_1-p)(p_2-p)}{p(p-1)},$$
$$A(p)=\frac{p_1p_2}{p}, \ \ E(2) := \{p \in \mathbb{P} \colon \left(A(p), A(p) + D(p)\right]\cap \mathbb{P}=\varnothing\},$$
$$
E(3):=\left\{p\in \mathbb{P}: \left(\frac{p_1p_2p_3}{pq}, 1+ \frac{(p_1-1)(p_2-1)(p_3-1)}{(p-1)(q-1)}\right]\cap \mathcal{O}_q=\varnothing ~\forall q\in \mathcal{O}_p\right\},
$$
$$
\mathcal{X} := \left\{u : u=X_{n, p} \text{ for } n \in \mathbb{N}, p \in \mathbb{P}\cap \left(\frac{n}{2},\infty\right)\right\},   
\mathcal{Y}_1 := \{Y_{p, 1} : p\in \mathbb{P}\cap [5,\infty)\}, 
$$
$$
\mathcal{Y}_2 := \{Y_{p, 2} : p \in E(2) \text{ and }\ p \geq 11 \} \ \ \text{and }
\mathcal{Y}_3 := \{Y_{p, 3} : p \in E(3) \text{ and }\ p \geq 11 \}.
$$
\end{definition}

\begin{theorem} \label{t2}
The sets $\mathcal{X}$ and $\mathcal{Y}_1$ are infinite subsets of $N_1.$ Also,  $\mathcal{Y}_2$ and $\mathcal{Y}_3$ are non-empty subsets of $N_1.$ 
 \end{theorem}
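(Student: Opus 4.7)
The plan rests on the characterization $M \in N_1 \iff \phi(y) > \phi(M)$ for every integer $y > M$, which is immediate from the definition of $N_1$. I would verify this inequality separately for each of the four families by analyzing the prime factorization of an arbitrary $y > M$ and reducing to a finite list of ``elementary alterations'' of $M$: (i) multiplying by a prime already dividing $M$; (ii) inserting a new prime; (iii) removing a prime $r \mid M$ and inserting a prime $q_0 \nmid M$. Any $y > M$ is built from $M$ by a sequence of such moves, and the inequality $\phi(y) > \phi(M)$ reduces to showing each atomic move is ``favourable'' in the ratio $y/\phi(y)$.

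For $\mathcal{X}$, let $u = X_{n,p} = (n/\mathrm{rad}(n))\,P_p$ where $P_p = \prod_{q \le p} q$. Given $y > u$, if $\mathrm{supp}(y) \supseteq \mathrm{supp}(u)$ then the identity $\phi(y)/y = \prod_{q \mid y}(1-1/q) \le \phi(u)/u$ combined with $y > u$ gives $\phi(y) > \phi(u)$ directly. If $y$ drops some prime $r \le p$ from the support or replaces it by a larger prime $q_0$, the primorial structure of $P_p$ together with the hypothesis $p > n/2$ (which controls the size of the ``powerful part'' $n/\mathrm{rad}(n) < 2p$) yields a ratio estimate that closes the case. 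The argument for $\mathcal{Y}_1 = \{Y_{p,1}\}$ is similar but simpler, since $Y_{p,1} = (p_1/p)\,P_p$: enumerating the elementary alterations (insert a prime $\ge p_2$, swap $p_1$ for a larger prime, raise a prime power, swap a prime $< p$ for a larger prime) and using $p \ge 5$ gives $\phi(y) > \phi(Y_{p,1})$ in every case by an elementary inequality.

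For $\mathcal{Y}_2$ and $\mathcal{Y}_3$, the key observation is that the primary competitor to $M = Y_{p,k}$ is $y = P_p \cdot q_0$, obtained by collapsing the factor $p_1 \cdots p_k / p$ of $M$ into a single larger prime $q_0$. A direct computation yields $y > M \iff q_0 > p_1 p_2/p = A(p)$ (for $k=2$, with the analogous endpoint for $k=3$) and $\phi(y) \le \phi(M) \iff q_0 \le A(p) + D(p)$. Thus $p \in E(k)$ is precisely the hypothesis excluding this competitor. Auxiliary competitors (swapping a small prime $r < p$ for $q_0$, swapping a single $p_i$, or increasing prime powers) produce shifted intervals of the same shape, whose endpoints shrink by factors like $r/p < 1$; a direct endpoint comparison using $p \ge 11$ shows each such interval is empty. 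Non-emptiness of $\mathcal{Y}_2, \mathcal{Y}_3$ is then verified by an explicit check: for example $p = 11$ gives $A(11) = 221/11 \approx 20.09$ and $A(11) + D(11) = 1 + 12 \cdot 16/10 = 20.2$, so the interval $(20.09, 20.2]$ contains no prime, placing $11 \in E(2)$ and giving $Y_{11,2} = 46410 \in \mathcal{Y}_2$.

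The main obstacle is the case analysis for $\mathcal{Y}_2$ and $\mathcal{Y}_3$: while $E(k)$ neatly encodes the primary obstruction, one must also systematically rule out every auxiliary modification of the prime factorization of $M$, each reducing to an explicit endpoint comparison. The bookkeeping is tedious but elementary, and the crux is identifying the right reduction so that every $y > M$ can be compared to one of finitely many canonical competitors, at which point the arithmetic conditions baked into $E(2)$, $E(3)$ and the constraints $p \ge 5$, $p \ge 11$ close the remaining gaps.
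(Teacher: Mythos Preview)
Your proposal contains a concrete logical error and a structural gap.

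First, in the $\mathcal{X}$ case you write that when $\mathrm{supp}(y) \supseteq \mathrm{supp}(u)$, the inequality $\phi(y)/y \le \phi(u)/u$ combined with $y > u$ gives $\phi(y) > \phi(u)$ ``directly''. The implication runs the wrong way: from $\phi(y)/y \le \phi(u)/u$ you obtain only $\phi(y) \le (y/u)\phi(u)$, an \emph{upper} bound on $\phi(y)$, not the lower bound you need. (The conclusion does hold in this subcase, but it requires a genuine argument: one must use that any extra prime $q_0 \in W(y)\setminus W(u)$ satisfies $q_0 > p$ together with the bound $n/\mathrm{rad}(n) < p$ coming from $p > n/2$.)

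Second, the ``atomic moves'' reduction is not sound as stated. Inserting a new prime \emph{increases} $y/\phi(y)$, so not every move is favourable in that ratio; and if you instead interpret ``favourable'' as ``$\phi$ increases along the move'', then the moves do not compose, since an intermediate step may land below $M$ where nothing is controlled. You cannot reduce the global statement $\phi(y) > \phi(M)$ for all $y > M$ to a local check along an arbitrary chain of alterations.

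The paper sidesteps both issues by working with the contrapositive: assume $\phi(y) \le \phi(M)$ and deduce $y \le M$. The organizing device is the ratio
\[
D(A,B) \;=\; \prod_{q\in A}\frac{q-1}{q}\,\prod_{q\in B}\frac{q}{q-1},
\]
for which $\phi(y)\le\phi(M)$ and $y>M$ force $D(W(y),W(M))<1$. The substantive step is to show, from $\phi(y)\le\phi(M)$ alone, that $|W(y)|\le|W(M)|$; for $\mathcal{X}$ this is exactly where the hypothesis $p>n/2$ enters, and for $\mathcal{Y}_k$ it is a short counting argument. A purely combinatorial lemma on finite sets of primes then yields $D(W(y),W(M))\ge 1$ whenever either $W(y)\subset W(M)$ or every prime of $W(y)\setminus W(M)$ exceeds $\max W(M)$, giving the contradiction. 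For $\mathcal{Y}_2,\mathcal{Y}_3$ one residual case remains, namely $[2,p]\cap\mathbb{P}\subset W(y)$; here your identification of the competitor $y=P_p\cdot q_1\,(\cdot\,q_2)$ and the role of $E(k)$ is correct, and the non-squarefree competitors are eliminated via Nagura's bound $p_3<\tfrac{9}{5}p$ for $p\ge 11$ rather than ad hoc endpoint comparisons.
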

 \noindent
 From Theorem \ref{t2}, we have the following general
divisibility result by observing that $X_{n,p}$ is a sparsely totient number for all $p>\frac{n}{2}$:
\begin{corollary}
Every  integer divides infinitely many sparsely totient numbers.
\end{corollary}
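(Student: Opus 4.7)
The plan is to extract the corollary directly from Theorem \ref{t2}: the family $\mathcal{X}\subseteq N_1$ is already flexible enough that, for each prescribed $n\in\mathbb{N}$, an infinite sub-family of multiples of $n$ can be isolated just by taking the auxiliary prime $p$ large enough to absorb all prime divisors of $n$.

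Concretely, fix $n\in\mathbb{N}$ and write $\mathrm{rad}(n):=\prod_{q\mid n,\, q\in\mathbb{P}}q$ for the squarefree radical. I would let $p$ range over primes with $p>n$; in particular $p>n/2$, so $X_{n,p}\in\mathcal{X}\subseteq N_1$ by Theorem \ref{t2}. Rewriting the definition,
$$X_{n,p}\;=\;\frac{n}{\mathrm{rad}(n)}\prod_{\substack{q\leq p\\ q\in\mathbb{P}}}q.$$
Every prime divisor of $n$ is at most $n<p$, so $\mathrm{rad}(n)$ divides the primorial on the right; multiplying through by $n/\mathrm{rad}(n)$ yields $n\mid X_{n,p}$. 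Distinct primes $p>n$ produce distinct values $X_{n,p}$ (each additional prime contributes a fresh factor to the primorial), so the sub-family $\{X_{n,p}:p\in\mathbb{P},\,p>n\}$ supplies infinitely many sparsely totient multiples of $n$.

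Since the real work is already contained in Theorem \ref{t2}, there is no genuine obstacle left to overcome; the corollary reduces to the bookkeeping observation that the parameter $p$ in the construction $X_{n,p}$ can be chosen to dominate every prime divisor of $n$.
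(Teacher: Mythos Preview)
Your argument is correct and follows exactly the route the paper indicates: the paper derives the corollary in one line by ``observing that $X_{n,p}$ is a sparsely totient number for all $p>\frac{n}{2}$,'' and you have simply made the divisibility step explicit by taking $p>n$ so that $\mathrm{rad}(n)$ divides the primorial. There is nothing to add; your version is a faithful (and slightly more detailed) expansion of the paper's own remark.
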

 Elements of $\mathcal{Y}_2$ and $\mathcal{Y}_3$ depend on the sets $E(2)$ and $E(3)$ respectively and hence the study of elements of these sets 
 is linked to the study of distribution of consecutive primes. The next theorem in this paper gives some explicit elements in the sets  
 $\mathcal{Y}_2$ and $\mathcal{Y}_3.$

\begin{theorem}\label{t3}
 For $p \in\mathbb{P}, \ p \geq 11,$
we have: \begin{enumerate}[label=(\roman*)]
\item\label{t3part1} If $p, \ p + 2i$ are consecutive primes for some $i \in \{1,2,3, 4\},$
then $Y_{p, 2} \in N_1.$ 
\item \label{t3part2}If  $p, \ p+2$ and $p+6$ are consecutive primes, then $Y_{p,3}\in N_1.$
\item \label{t3part3} Let $a, b$ be distinct positive even integers.  If $p \geq 2ab$ and 
$p, \ p+a, \ p+b$ are consecutive primes, then $Y_{p, 2} \in N_1.$
\end{enumerate}
\end{theorem}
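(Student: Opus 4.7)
The plan is to invoke Theorem~\ref{t2}, which reduces each part to showing $p \in E(2)$ (for (i) and (iii)) or $p \in E(3)$ (for (ii)). The central algebraic identity I would prove first is
\[
A(p) + D(p) \;=\; 1 + \frac{(p_1-1)(p_2-1)}{p-1},
\]
obtained by putting $A(p) = p_1p_2/p$ and $D(p) = (p_1-p)(p_2-p)/(p(p-1))$ over the common denominator $p(p-1)$, expanding $(p_1-p)(p_2-p) = p_1p_2 - p(p_1+p_2) + p^2$, and recognising the numerator as $p(p_1-1)(p_2-1) + p(p-1)$. An analogous identity simplifies the upper endpoint in the definition of $E(3)$.

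I would treat part (iii) first, as it is cleanest. With $p_1 = p+a$ and $p_2 = p+b$, expansion gives $A(p) = p+a+b+ab/p$ and $A(p)+D(p) = p+a+b+ab/(p-1)$. The hypothesis $p \geq 2ab$ forces $ab/(p-1) \leq p/(2(p-1)) < 1$ for $p > 2$, so the interval $(A(p), A(p)+D(p)]$ is properly contained in the open unit interval $(p+a+b,\, p+a+b+1)$ and hence contains no integer. In particular it contains no prime, so $p \in E(2)$.

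For part (i), set $k := p_2 - p$ and use $p_1 - p = 2i$. A short calculation yields $A(p) = p_2 + 2i + 2ik/p$ and $A(p)+D(p) = p_2 + 2i + 2ik/(p-1)$, so an integer $M = p_2 + 2i + j$ lies in the interval iff the positive integer $j$ satisfies $j(p-1) \leq 2ik \leq jp - 1$; equivalently $M = (p_1p_2 + r)/p$ for some $r \in \{1, \ldots, 2i\}$ with $r \equiv -2ik \pmod{p}$. Combining a Bertrand-type bound on $p_2$ (Nagura's theorem that guarantees a prime in $(n, 6n/5]$ for $n \geq 25$, sufficient for $i \in \{1, 2\}$, supplemented by a sharper effective bound such as Dusart's $p_2 \leq p(1 + 1/(2\ln^2 p))$ for $p \geq 3275$ in the cases $i \in \{3, 4\}$) with a finite check of the remaining small $p$, I would show that no $j \geq 1$ ever works, except in the isolated case $(p, p_1, p_2) = (13, 17, 19)$ (the $i=2$ instance with $p=13$), where the sole integer in the interval is $M = 25 = 5^2$, which is composite. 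Hence $p \in E(2)$.

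Part (ii) proceeds along the same lines. For $p_1 = p+2$, $p_2 = p+6$, and each prime $q > p$, the $E(3)$ interval has length shrinking with $q$, and once $q > p_1p_2p_3/p$ the interval sits inside $(0, 2)$, containing no prime greater than $q$. One therefore checks only the finitely many primes $q$ in between, applying a Bertrand/Dusart-type bound on $p_3$ and a finite verification for small $p$. The hard part will be the sub-cases $i \in \{3, 4\}$ of part (i) and the analogous portion of part (ii): Nagura's $6n/5$ ratio is not sharp enough to force $k < (p-1)/(2i)$ directly, so either a sharper effective prime-gap inequality must be cited or, for each sporadic integer surviving the asymptotic bound, an explicit composite certificate must be produced. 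Verifying that the finitely many candidate integers $M$ are genuinely never prime---without invoking any unproven conjecture---is where the real care is needed.
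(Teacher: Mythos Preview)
Your reduction to membership in $E(2)$ and $E(3)$ via Propositions~\ref{Yp2noprimes} and~\ref{Yp3}, and your treatment of part~(iii), match the paper almost exactly. The difference lies in how the harder sub-cases are closed off.

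For part~(i) with $i\in\{3,4\}$, you propose importing Dusart's sharper effective bound because Nagura alone does not force $\Delta(d_p)<2$. The paper avoids this: Nagura still gives $\Delta(d_p)<4$, so the only surviving even value is $l=2$, i.e.\ $pq=p_1p_2+2$. Substituting $p_1=p+2d_p$ yields $q=p_2+2k_{d_p}$ with $k_{d_p}:=(d_pp_2+1)/p$, which must be an \emph{integer}. One line of Nagura then squeezes $d_p<k_{d_p}<d_p+1$ for all $p$ beyond an explicit small threshold ($53$ for $d_p=3$, $197$ for $d_p=4$), a contradiction; the finitely many remaining $p$ are dispatched by direct inspection. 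So the paper closes the gap with an integrality/parity argument rather than a stronger prime-gap input.

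For part~(ii), the paper is likewise more explicit than your outline: assuming $p\notin E(3)$, it takes the witnessing pair $q_1<q_2$ and splits into the three cases $q_1\geq p_3$, $q_1=p_2$, $q_1=p_1$. Each is killed by a short inequality using $p_3<\tfrac{9}{5}p$ (Lemma~\ref{Naguralemma}) together with a parity or mod~$3$ obstruction (e.g.\ in the last case one is forced to $p\equiv 1\pmod 3$, contradicting primality of $p+2$). Your plan to bound the range of $q$ and check finitely many candidates would also succeed, but the paper's case split extracts enough arithmetic structure that no appeal beyond Nagura is needed.

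In short: your strategy is sound and would go through, but you trade the paper's integrality/parity tricks for an external sharper prime-gap estimate. The paper's route is more self-contained; yours is more mechanical and perhaps easier to generalise to larger $i$ or $k$.
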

  By Theorem \ref{t3}\ref{t3part1}, $\mathcal{Y}_2$ will be an infinite set if there is an affirmative answer to the twin prime
  conjecture which is the  case $k=2$ of the following conjecture. 
  \begin{conjecture}[Polignac's conjecture]\label{twin_prime}
   For every even natural number k, there are infinitely many pairs of primes that differ by $k.$ 
  \end{conjecture}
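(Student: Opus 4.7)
The statement is Polignac's conjecture, which dates to 1849 and includes the twin prime conjecture as the case $k=2$; it is a famous open problem, so what follows is not a complete proof but a plan that reaches the current frontier and identifies where the argument must stop.

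The natural starting point is the GPY sieve of Goldston, Pintz, and Yıldırım, which attaches a Selberg-type weight to an admissible $k_0$-tuple $\{h_1,\dots,h_{k_0}\}$ and analyzes the weighted sum $\sum_{n\leq x}\bigl(\sum_i \mathbf{1}_{\mathbb{P}}(n+h_i)-1\bigr)w(n)^2$. Combined with primes-in-AP equidistribution at the Bombieri--Vinogradov level $\theta=1/2$ this gives $\liminf_n (p_{n+1}-p_n)/\log p_n = 0$. Zhang's enlargement of the admissible distribution level past $1/2$, together with the multidimensional weights of Maynard and the Polymath refinements, then converts this into $\liminf_n (p_{n+1}-p_n)\leq 246$ unconditionally and $\leq 6$ under Elliott--Halberstam, and more generally bounded $m$-gaps for every fixed $m$. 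Specializing the tuple to $\{0,k\}$ for a chosen even $k$, the plan would be to design a Maynard--Tao weight $w$ concentrated on that tuple so as to force $p_{n+1}-p_n=k$ infinitely often; for small $k$ this reduces to a carefully optimized polynomial extremal problem for the quadratic form appearing in the sieve.

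The decisive obstruction is Selberg's parity problem: a linear sieve cannot by itself distinguish integers with an odd number of prime factors from those with an even number, and this is precisely why even the optimistic Elliott--Halberstam input only reaches a gap of $6$ rather than $2$. To single out a specific even $k$ as an actual prime gap one would need a genuinely non-sieve arithmetic input, for example a power-saving estimate for the shifted convolution $\sum_{n\leq x}\Lambda(n)\Lambda(n+k)$ with sufficient uniformity in $k$, or an automorphic/spectral identity breaking the parity barrier. Producing such an input, not the sieve bookkeeping around it, is the step I do not see how to carry out with present methods, and it is the reason the conjecture remains open even for $k=2$.
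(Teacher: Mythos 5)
You correctly recognize that this statement is Polignac's conjecture, which the paper presents as an open \emph{conjecture} with no proof, using only Zhang's bounded-gaps result (Proposition~\ref{70000000}) as a proven weak substitute. Your summary of the GPY--Zhang--Maynard--Tao state of the art and of the parity obstruction is accurate and consistent with the paper's treatment, so there is nothing to reconcile: neither you nor the paper claims a proof, and none is currently known.
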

Our next theorem guarantees the infinitude of $\mathcal{Y}_2$ if there exist
infinitely many primes $p$ such that the fractional part 
of $\frac{p_1p_2}{p}$ is bounded above by an absolute constant $\delta_0<1.$
\begin{theorem}\label{t4}
Let $p_i$ be the $i\text{th}$ smallest prime greater than $p$ and $0 < \delta_0 <1.$ If 
$A:=\left\{p\in \mathbb{P} \colon \frac{p_1p_2}{p}-\left\lfloor\frac{p_1p_2}{p}\right\rfloor< \delta_0\right\}$ is an infinite set of primes,
then $Y_{p, 2} \in N_1$ for all but finitely many elements $p$ in $A.$
\end{theorem}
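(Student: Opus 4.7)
The plan is to reduce Theorem \ref{t4} directly to Theorem \ref{t2}, which asserts $Y_{p,2}\in N_1$ whenever $p\in E(2)$ and $p\ge 11$. Thus it suffices to show that $p\in E(2)$ for all but finitely many $p\in A$, since the bound $p\ge 11$ is automatic once $p$ is large. Unpacking the definition of $E(2)$, I need to verify that the half-open interval $(A(p),A(p)+D(p)]$ contains no prime for large $p\in A$; I will in fact show it contains no integer.

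Writing $a:=p_1-p$ and $b:=p_2-p$, a direct computation gives
\[
A(p)=p+(a+b)+\tfrac{ab}{p}\qquad\text{and}\qquad A(p)+D(p)=p+(a+b)+\tfrac{ab}{p-1}.
\]
Since $p$ is a prime strictly smaller than both $p_1$ and $p_2$, it divides neither, hence $p\nmid p_1p_2$; so $A(p)$ is not an integer and $\lceil A(p)\rceil=\lfloor A(p)\rfloor+1$. A short manipulation yields the identity $\{A(p)\}+D(p)=\tfrac{ab}{p-1}-\lfloor ab/p\rfloor$, and it follows that the interval $(A(p),A(p)+D(p)]$ lies inside the integer-free strip $(\lfloor A(p)\rfloor,\lfloor A(p)\rfloor+1)$ precisely when $\{A(p)\}+D(p)<1$.

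For $p\in A$ the defining condition furnishes $\{A(p)\}<\delta_0$ for free, so the task reduces to proving $D(p)\le 1-\delta_0$ for all sufficiently large $p$. Here the Prime Number Theorem (in the weak form $p_{n+1}/p_n\to 1$) gives $p_1/p\to 1$ and $p_2/p\to 1$ as $p\to\infty$ over primes, whence
\[
D(p)=\frac{(p_1-p)(p_2-p)}{p(p-1)}\longrightarrow 0.
\]
In particular $D(p)<1-\delta_0$ for every prime $p$ beyond some uniform threshold (independent of $A$), so for every sufficiently large $p\in A$ one has $\{A(p)\}+D(p)<\delta_0+(1-\delta_0)=1$; this forces $p\in E(2)$, and Theorem \ref{t2} then delivers $Y_{p,2}\in N_1$. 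The only ingredient beyond Theorem \ref{t2} is the limit $D(p)\to 0$, a one-line consequence of PNT, so I do not foresee any serious obstacle.
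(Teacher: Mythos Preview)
Your proof is correct and follows essentially the same route as the paper: show $D(p)\to 0$ (the paper isolates this as Lemma~\ref{Dptend0}), combine $D(p)<1-\delta_0$ with the hypothesis $\{A(p)\}<\delta_0$ to conclude that $(A(p),A(p)+D(p)]$ misses all integers, hence $p\in E(2)$, and invoke the $\mathcal{Y}_2\subset N_1$ part of Theorem~\ref{t2} (equivalently Proposition~\ref{Yp2noprimes}). Your explicit rewriting $A(p)=p+a+b+ab/p$, $A(p)+D(p)=p+a+b+ab/(p-1)$ and the remark $p\nmid p_1p_2$ are pleasant extra detail but not needed for the argument.
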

In 2014, Yitang Zhang  \cite{Zhang} proved the  weaker version of the twin prime conjecture.
\begin{proposition}[\cite{Zhang}, Yitang Zhang] \label{70000000}
Let $p_1$ be the next consecutive prime to a prime $p.$ Then there are infinitely many primes $p$ such that $p_1-p<7\times 10^{7}.$ 
\end{proposition}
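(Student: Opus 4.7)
This is Zhang's celebrated theorem on bounded gaps between primes. A genuine proof is far beyond a short sketch, but the plan is to implement the Goldston--Pintz--Yıldırım (GPY) sieve framework together with Zhang's enhancement. First, fix an admissible $k$-tuple $\mathcal{H}=\{h_1,\dots,h_k\}$ (for every prime $p$, the $h_i$ occupy fewer than $p$ residue classes mod $p$) and introduce a Selberg-type weight
\[
w(n) = \Bigg(\sum_{\substack{d \mid \prod_i (n+h_i) \\ d \leq R}} \lambda_d\Bigg)^{2}
\]
with $R = N^{\theta/2}$ and $\lambda_d$ chosen as in GPY (supported on squarefree $d$, derived from a smoothly truncated $(k{+}\ell)$-th divisor function). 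Compute $S_1=\sum_{n\sim N} w(n)$ and $S_2=\sum_{n\sim N} w(n)\bigl(\sum_i \mathbf{1}_{\mathbb{P}}(n+h_i)\bigr)$; whenever $S_2 > (\log N)\, S_1$, some $n$ yields at least two primes among the $n+h_i$, producing a prime gap of size at most $\operatorname{diam}(\mathcal{H})$.

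The asymptotic evaluation of $S_1$ and $S_2$ demands a level of distribution $\theta$ for primes in arithmetic progressions. Bombieri--Vinogradov supplies any $\theta<1/2$, which falls just short of what GPY needs. The main obstacle, and Zhang's decisive contribution, is to push $\theta$ strictly past $1/2$. He achieves this by restricting the sieve to $y$-smooth moduli with $y = N^{\varpi}$ and proving an equidistribution estimate with $\theta = 1/2 + 2\varpi$ for an explicit tiny $\varpi>0$ (Zhang takes $\varpi = 1/1168$). The technical heart is a Heath--Brown decomposition of $\Lambda$ into Type I/II/III convolutions, followed by dispersion estimates combined with the Weil bound for Kloosterman sums and Deligne's bounds for hyper-Kloosterman sums to control the Type III terms.

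With this extended level of distribution, the GPY optimization succeeds whenever $k$ is sufficiently large; Zhang verifies $k \geq 3{,}500{,}000$ suffices. The last step is to exhibit an admissible $\mathcal{H}$ of diameter less than $7\times 10^{7}$, for instance by taking an appropriate set of primes past a threshold and checking the admissibility congruence conditions by a Jacobsthal-style bound. The genuinely hard step is the smooth-moduli equidistribution; the rest is a fairly mechanical GPY computation together with an admissibility check, so all of the deep arithmetic input is concentrated in the exponential sum estimates for smooth moduli.
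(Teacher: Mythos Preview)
Your sketch is a fair high-level outline of Zhang's argument, but note that the paper does not prove this proposition at all: it is stated as an external result and simply cited to Zhang's \emph{Annals} paper \cite{Zhang}. There is therefore no ``paper's own proof'' to compare against; the authors treat bounded gaps as a black box and only use the consequence that infinitely many primes $p$ satisfy $p_1-p<7\times 10^{7}$ in the proof of Corollary~\ref{Infinite_Y_2}.

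That said, as a summary of Zhang's method your outline is accurate in its main beats (GPY weights, the need to push the level of distribution past $1/2$, restriction to smooth moduli, Type I/II/III decomposition, Kloosterman and hyper-Kloosterman estimates, and the final admissible-tuple check). For the purposes of this paper, however, no proof is expected here; a one-line citation to \cite{Zhang} is all that is required.
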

\noindent
Using this, we get that $\mathcal{Y}_2$ is an infinite set as a corollary of Theorem $\ref{t4}.$
\begin{figure*}
\begin{center}
\includegraphics{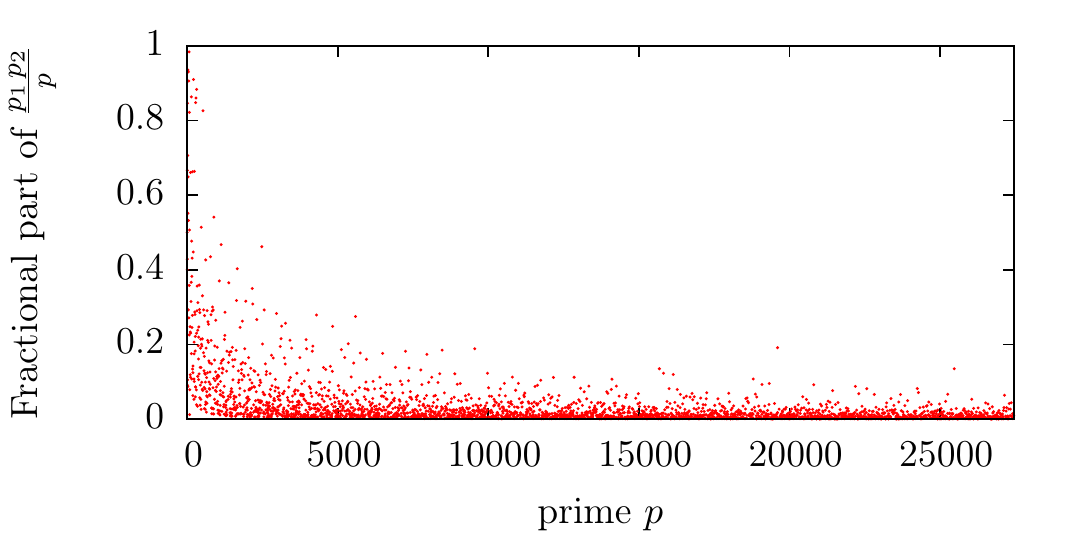}
 \caption{
 Plot of prime $p$ vs the fractional part of $\frac{p_1p_2}{p}$ where $p, p_1, p_2$ are consecutive primes.}
 \label{plot}
\end{center}
\end{figure*}
\begin{corollary}\label{Infinite_Y_2}
 $\mathcal{Y}_2$ is an infinite set.
\end{corollary}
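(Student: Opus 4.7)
The plan is to verify the hypothesis of Theorem~\ref{t4} for some $\delta_0 < 1$ by combining Proposition~\ref{70000000} (Zhang's bounded prime gaps) with a mild quantitative consequence of the Prime Number Theorem. First I would rewrite the fractional part in terms of prime gaps: if $p$ is prime with immediate successors $p_1, p_2$, then setting $a = p_1 - p$ and $b = p_2 - p$, the identity
\[
\frac{p_1 p_2}{p} \;=\; \frac{(p+a)(p+b)}{p} \;=\; p + a + b + \frac{ab}{p}
\]
shows that as soon as $ab < p$ we have $\mathrm{frac}(p_1 p_2 / p) = ab/p$. So the task reduces to exhibiting infinitely many primes $p$ with $ab < \delta_0\, p$ for some fixed $\delta_0 < 1$.

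By Zhang's theorem (Proposition~\ref{70000000}) there is an infinite set $Z$ of primes with $a \leq H := 7\times 10^7$. For these primes $b = a + (p_2 - p_1) \leq H + (p_2 - p_1)$, so one still has to control the second gap. Here I would invoke the classical consequence $p_n \sim n \log n$ of the Prime Number Theorem, which forces $p_{n+1}/p_n \to 1$ and hence $p_{n+1} - p_n = o(p_n)$; in particular $p_2 - p_1 = o(p_1) = o(p)$ along $Z$. Therefore $ab \leq H \cdot b = o(p)$ along $Z$, and in particular $ab/p < 1/2$ for all sufficiently large $p \in Z$. Consequently, the set $A$ appearing in Theorem~\ref{t4} is infinite when we take $\delta_0 = 1/2$.

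Theorem~\ref{t4} then produces $Y_{p,2} \in N_1$ for all but finitely many $p \in A$, and since distinct primes $p$ yield distinct squarefree integers $Y_{p,2}$ (they differ in their prime factorizations), this gives infinitely many elements of $\mathcal{Y}_2$. The main obstacle is a small one: Zhang bounds the first gap $p_1 - p$ but gives no information about the second gap $p_2 - p_1$, and a naive use of Bertrand's postulate yields only $p_2 - p_1 < p_1 < 2p$, which is far too weak because the factor $H$ from Zhang enters multiplicatively. Using the $o(p)$ consequence of PNT repairs this (Hoheisel's polynomial bound or Baker--Harman--Pintz would also suffice, but are overkill for the present purpose).
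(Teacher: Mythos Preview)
Your proof is correct and follows essentially the same route as the paper: both combine Zhang's bounded-gap theorem (to control $p_1-p$) with the PNT consequence $p_2-p=o(p)$ (to control the remaining gap) so that the fractional part of $p_1p_2/p$ equals $(p_1-p)(p_2-p)/p\to 0$ along an infinite set of primes, after which Theorem~\ref{t4} finishes the job. The only cosmetic difference is the algebraic rearrangement of $p_1p_2/p$; the paper writes it as $p_2+N+N(p_2-p)/p$ with $N=p_1-p$, which is the same identity you use.
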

On the basis of computations for the fractional part of $\frac{p_1p_2}{p}$ for $p\in \mathbb{P}$ in Figure \ref{plot}, one can hope that the fractional part of
$\frac{p_1p_2}{p}$ may tend to $ 0$
as $p\rightarrow \infty$ and  the fractional part of $\frac{p_1p_2}{p}$ is less than $0.983607$ for each $p\in \mathbb{P}$ [see Question \ref{sectionq3}]. If this expectation is true, then $Y_{p,2}$ is a sparsely totient 
number for all sufficiently large primes $p.$
In view of this observation we can ask the following question.
\begin{question}
 Is $\mathcal{Y}_2=\{Y_{p,2}\colon p\in \mathbb{P}, p\geq 11\}$?
\end{question}

In Section \ref{section_ampistn}, we study certain additive and multiplicative patterns in the sparsely totient numbers. More precisely,
we investigate patterns like $\{x, y, x + y\}, \{x, y, xy\}$ and generalizations of them.
$N_1$ contains the set of finite sums of arbitrarily long sequences (an $IP_0$ set)
but does not contain the set of finite sums of any infinite sequence (an $IP$ set). 
However, in the multiplicative scenario, it does contain the set of finite products of an infinite sequence (see Definition \ref{FS_FP} and \ref{IP_A_M}).
One can also ask whether both sum and product occur simultaneously in $N_1.$ This has been answered
in the negative for the subset $\mathcal{X}$ of $N_1$ [see Question \ref{sectionq5}]. We also look at the presence of arithmetic/geometric progressions
in  $N_1.$ We describe sparseness of the set $N_1$ using the notion of piecewise syndeticity [see Definition \ref{piecewise_defn}].
The following theorem  summarizes these properties: 
\begin{theorem} \label{t5}
 \begin{enumerate}[label=(\roman*)]
 \item The set of sparsely totient numbers is  multiplicatively piecewise syndetic but  not additively piecewise syndetic.
 \item The set of sparsely totient numbers is an additive $IP_0$ and  multiplicative $IP$ set but not an additive $IP$ set. 
\item There do not exist $x, y \in \mathbb{N}$ such that $x+y, xy \in \mathcal{X}.$
\item The set of sparsely totient numbers contains arbitrarily long arithmetic and  geometric progressions. 
\end{enumerate}
 \end{theorem}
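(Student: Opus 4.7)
The strategy relies throughout on the explicit construction in Theorem \ref{t2}: for every positive integer $n$ and every prime $p$ with $p > n/2$, one has $X_{n,p} = (n/\mathrm{rad}(n))\,P_p \in N_1$, where $P_p = \prod_{q \leq p,\, q \in \mathbb{P}} q$. The crucial identity is $X_{j\,\mathrm{rad}(j),\,p} = j P_p$, so $j P_p \in N_1$ whenever $p$ is a prime exceeding $j\,\mathrm{rad}(j)/2$. Given $k$ and a prime $p > k^2/2$ (Bertrand's postulate), the block $\{j P_p : 1 \leq j \leq k\}$ lies in $N_1$, producing simultaneously arithmetic progressions of length $k$ with common difference $P_p$ and, applied with $j = r^i$, geometric progressions of arbitrary ratio $r$ and length, proving (iv). Setting $x_i := 2^{i-1} P_p$ for $i = 1, \ldots, n$ with a prime $p > (2^n - 1)^2/2$, every non-empty finite sum equals $k P_p$ for some $1 \leq k \leq 2^n - 1$ and hence lies in $N_1$; this gives the additive $IP_0$ part of (ii).

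For the positive half of (i), I shall prove the stronger statement that $N_1$ is multiplicatively thick. Given any finite $F = \{s_1, \ldots, s_k\} \subseteq \mathbb{N}$, set $L := \mathrm{lcm}(F)$ and pick a prime $p > L^2/2$. Put $g := X_{L, p} \in N_1$. Because each $s_i$ divides $L$, we have $\mathrm{rad}(L s_i) = \mathrm{rad}(L)$, so
\[
g \cdot s_i \;=\; \frac{L}{\mathrm{rad}(L)}\,s_i\,P_p \;=\; \frac{L s_i}{\mathrm{rad}(L s_i)}\,P_p \;=\; X_{L s_i,\,p} \;\in\; N_1,
\]
since $p > L^2/2 \geq L s_i/2$. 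Thus $gF \subseteq N_1$, so $N_1$ is multiplicatively thick, and in particular multiplicatively piecewise syndetic. The multiplicative $IP$ claim in (ii) then follows from the Galvin--Glazer theorem: every piecewise syndetic subset of a discrete semigroup contains an $IP$ set.

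The negative statements come from the sparseness of $N_1$: Masser--Shiu's estimates imply that the gap between consecutive sparsely totient numbers tends to infinity. An additively piecewise syndetic set with shift-set of cardinality $r$ must contain arbitrarily long intervals on which it has density at least $1/r$, which is incompatible with unbounded gaps, so $N_1$ is not additively piecewise syndetic. Likewise, any infinite sequence $x_1 < x_2 < \cdots$ all of whose finite sums lie in $N_1$ would force $2^n - 1$ distinct elements of $N_1$ into $[x_1, n x_n]$; the same density bound applied to $x_n \in N_1$ then yields a contradiction for $n$ large, ruling out the additive $IP$ property.

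The most delicate part is (iii). Suppose $x+y = X_{n_1, p_1}$ and $xy = X_{n_2, p_2}$ lie in $\mathcal{X}$; a short $q$-adic check shows the largest prime factor of $X_{n,p}$ is always $p$ itself, so $p_1, p_2$ are intrinsic. Reducing $z^2 - (x+y)z + xy$ modulo any prime $q \leq \min(p_1, p_2)$ yields $z^2$, forcing $q \mid x$ and $q \mid y$; hence $P_{\min(p_1, p_2)} \mid \gcd(x, y)$. If $p_1 \geq p_2$, then $xy \geq P_{p_2}^2$, while the inequalities $n_2 < 2 p_2$ and $n_2/\mathrm{rad}(n_2) \leq n_2/2$ force $xy < p_2 P_{p_2}$, leading to $P_{p_2} < p_2$, which is impossible. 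If $p_1 < p_2$, then each prime $q \in (p_1, p_2]$ divides $xy$ and so divides $x$ or $y$, producing $xy \geq P_{p_1} P_{p_2}$; combining with $xy < p_2 P_{p_2}$ gives $p_2 > P_{p_1}$, hence $P_{p_2} \geq p_2 P_{p_1} > P_{p_1}^2$. On the other hand, $(x+y)^2 \geq 4 x y$ together with $x + y < p_1 P_{p_1}$ yields $P_{p_2} < p_1^2 P_{p_1}/4$, so $P_{p_1}^2 < p_1^2 P_{p_1}/4$, i.e.\ $4 P_{p_1} < p_1^2$, which fails for every prime $p_1$. The main obstacle is keeping this chain of inequalities tight enough that no parameter range slips through; the argument closes this off by stacking three lower bounds on $xy$ against a single AM--GM upper bound derived from the explicit form of elements of $\mathcal{X}$.
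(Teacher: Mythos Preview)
Your constructions for (iv), for the additive $IP_0$ part of (ii), and for the multiplicative half of (i) are correct and close in spirit to the paper's, though you obtain something slightly stronger: you show $N_1$ itself is multiplicatively thick, whereas the paper shows only that $2^{-1}\mathcal{X}$ is thick. Your treatment of (iii) is also correct but organized differently: the paper observes at once that $v_{p_2}(xy)=1$ forces $p_2\nmid(x+y)$ and hence $p_2>p_1$, then derives a single contradiction from $T(x,y)\ge p_2$ versus $T(x,y)<p_1$; you instead run a case split on $p_1\gtrless p_2$ and close each case with a chain of primorial inequalities and AM--GM. Both work; the paper's route is shorter, yours avoids appealing to the specific form of Remark~\ref{remark1}.

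There are, however, two genuine problems in (ii).

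First, your appeal to ``Galvin--Glazer: every piecewise syndetic subset contains an $IP$ set'' is false as stated: in $(\mathbb{N},+)$ the odd numbers are syndetic but contain no $IP_2$ set. What \emph{is} true is that every thick set contains an $IP$ set (the standard iterative construction: pick $x_1\in A$, then translate $\{1,x_1\}$ into $A$ to get $x_2$, etc.). Since you proved $N_1$ is multiplicatively thick, the conclusion survives, but the justification must be corrected.

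Second, your argument that $N_1$ is not an additive $IP$ set has a real gap. Packing $2^n-1$ finite sums into $[x_1,nx_n]$ gives no contradiction without control on the growth of $x_n$: if $x_n$ grows fast enough (say doubly exponentially), the interval is enormous and a zero-density set can easily accommodate that many points. Nor are the $2^n-1$ sums necessarily distinct. The paper's argument is much simpler and uses exactly the fact you already invoked for (i): for every $n\ge 2$ the elements $\sum_{i=1}^n x_i$ and $\sum_{i=2}^n x_i$ both lie in $N_1$ and differ by the \emph{fixed} quantity $x_1$; since (by Theorem~\ref{t1}) all sufficiently large elements of $N_1$ are divisible by any chosen prime $p>x_1$, their difference would have to be divisible by $p$, a contradiction. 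You should replace your counting sketch with this two-line argument.
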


 In Section \ref{section_question}, we pose some open problems arising from the present
work.

\begin{notation}
Let $\mathbb{N}, \mathbb{P}, \mathbb{Z}$ denote, respectively, the set of positive 
integers, the set of prime numbers and the set of integers. $p, q$ will always represent prime numbers
unless otherwise mentioned. 
$\lfloor x \rfloor$ denotes the greatest integer less than or equal to $x,$ $[a, b]$ denotes the set $\{x \in \mathbb{N}: a \leq x \leq b\}$ and similarly for the sets 
$(a, b], [a, b), (a, b)$ and $W(x)$ denotes the set of prime divisors of $x.$ By convention, we assume empty products and 
empty sums to take the values 1 and 0 respectively.
\end{notation}

\section{\textbf{The prime divisors of sparsely totient numbers}}\label{section2}
Euler's totient function $\phi$ is a well studied arithmetical function in number theory. For a positive integer $n,$
$\phi(n)$ counts the number of positive integers less than or equal to $n$ which are co-prime
to $n.$ It has a closed formula in terms of the prime factors dividing $n,$ namely, 
$$ \phi(n) = n \displaystyle\prod_{p \mid n}\Big(1 - \frac{1}{p}\Big).$$ From this formula, we immediately see
the following relations for $\phi$ which are frequently used in the sequel.
\begin{lemma} \cite{gupta} \label{basic.totient.bound}
Let $k, u, n \in \mathbb{N}$ and $p,p_1, p_2, \dots p_k$ be prime numbers such that $p_{i} \mid n$ for $i \in [1, k].$ Then 

   \begin{inparaenum}[(i)]
    \item \label{eularestimate}$\phi(n)\leq n\displaystyle\prod_{i=1}^{k}\displaystyle\left(1 - \frac{1}{p_{i}}\right),$
    \item \label{gupta} $\phi(pu) = \begin{cases} p\phi(u) &\mbox{if } p \mid u, \\
 (p - 1)\phi(u) &\mbox{if } p \nmid u. \end{cases}$ \\
   \end{inparaenum}
\end{lemma}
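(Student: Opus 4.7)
The plan is to deduce both assertions directly from the standard product formula $\phi(n) = n\prod_{p\mid n}(1-1/p)$ displayed immediately before the lemma, together with the multiplicativity of $\phi$. No new machinery is needed; the work is purely a matter of unpacking the definition of $\phi$ on prime powers.

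For part (i), I would first observe that by hypothesis $\{p_1,\dots,p_k\}$ is a subset of the set of prime divisors of $n$. Then I would split the product in the closed formula as
\begin{equation*}
\phi(n) \;=\; n\prod_{p\mid n}\Bigl(1-\tfrac{1}{p}\Bigr) \;=\; n\prod_{i=1}^{k}\Bigl(1-\tfrac{1}{p_i}\Bigr)\cdot\!\!\prod_{\substack{p\mid n\\ p\notin\{p_1,\dots,p_k\}}}\!\!\Bigl(1-\tfrac{1}{p}\Bigr),
\end{equation*}
and note that the second factor is a product of positive reals each strictly less than $1$, hence is at most $1$. The inequality follows.

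For part (ii), I would split into two cases depending on whether $p\mid u$. If $p\nmid u$ then $\gcd(p,u)=1$, so by multiplicativity $\phi(pu)=\phi(p)\phi(u)=(p-1)\phi(u)$. If $p\mid u$, write $u=p^{a}v$ with $a\geq 1$ and $\gcd(p,v)=1$; then $pu=p^{a+1}v$ and multiplicativity gives
\begin{equation*}
\phi(pu) \;=\; \phi(p^{a+1})\,\phi(v) \;=\; p^{a}(p-1)\phi(v) \;=\; p\cdot p^{a-1}(p-1)\phi(v) \;=\; p\,\phi(p^{a})\phi(v) \;=\; p\,\phi(u),
\end{equation*}
using $\phi(p^{j})=p^{j-1}(p-1)$ at each step.

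Honestly, I do not expect any genuine obstacle here: the lemma is a direct bookkeeping consequence of the product formula and the multiplicativity of $\phi$, and the only thing to be careful about in part (i) is to index the product over a subset of the prime divisors of $n$ rather than over an arbitrary collection of primes (otherwise one would be throwing in spurious factors $(1-1/p)$ corresponding to primes not dividing $n$, which would falsify the inequality if $n=1$ were allowed in a degenerate way). Handling the edge case $k=0$ is covered by the convention that the empty product equals $1$, stated in the notation block.
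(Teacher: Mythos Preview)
Your argument is correct. The paper does not supply its own proof of this lemma; it simply records the two facts with a citation to \cite{gupta} and moves on, so there is nothing to compare against beyond noting that your derivation from the Euler product formula and multiplicativity is exactly the standard justification.
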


From the definition of $N_1(m),$ it follows that $N_1(m) \rightarrow \infty$ as $m \rightarrow \infty.$ 
It also follows from the definition that if $N \in \mathbb{N}$ is not in $N_1,$ then there exists a positive integer $y$ greater than $N$
with $\phi(y)\leq \phi(N)$ and our goal is to try to investigate such elements in this section. 
To prove the \textbf{\ `sufficient'} part of Theorem \ref{t1}, it is enough to prove the equivalent statement: 
\begin{equation}
 \text{Given $p\in\mathbb{P}$ and $N \in N_1$ such that $p \nmid N,$ then $N$ must be ``small enough''.} \label{trail}
\end{equation}
\noindent
We make the following definition which will be used frequently
in the sequel.
\begin{definition}[Valuation]\label{valuation}
 Let $p$ be a prime number. Then the $p$-valuation on the integers $\mathbb{Z}$ is the map
 $v_{p} \colon \mathbb{Z}\rightarrow \mathbb{N}\cup \{0, \infty\}$ defined
 by $v_{p}(0)=\infty$ and $v_{p}(n)=r$ for $n \neq 0,$ where $r$ is the largest non-negative integer such that $p^{r} \mid n.$
\end{definition}
Therefore, for a given prime $p$, we need to find $c(p)\in \mathbb{N}$ and $p_0(p)\in \mathbb{P}$  to prove 
\ref{trail} such that if  $N \in N_1$ and $p \nmid N,$ then 
\begin{enumerate}[label=(\text{\Alph*})]
\item \label{N1_A} $v_q(N)=0 \ \forall q\geq p_0(p)$ and
\item  \label{N1_B} $v_q(N)< c(p) \ \forall q<p_0(p).$
\end{enumerate}
Proposition \ref{p1} deals with
the value of $v_q(N)$ in case \ref{N1_A}  and later on,
Proposition \ref{p2} deals with the value of $v_q(N)$ in case \ref{N1_B}.
In the proof of Proposition \ref{p1} below, the element $y$ satisfying $y > N$ and $\phi(y) \leq \phi(N)$
is obtained by using the smallest multiple of $p$ greater than $q^r.$ 

We provide the statement of Proposition \ref{p1} again in full detail.
\begin{proposition}\label{p1detail}
Suppose $N$ is a positive integer and $p, q$ are primes such that $p<q,\ p \nmid N$ and
$q^r \mid N$ for some $r\in\mathbb{N}.$
\begin{enumerate}
 \item If $r>2,$ then \ $\exists \ y>N$ such that $\phi(y)\leq \phi(N).$ 
 \item  If $r=2$ and $q\geq\frac{p+\sqrt{p^2+4(p-1)^2}}{2},$ then  $\exists \ y>N$ such that $\phi(y)\leq \phi(N).$ 
 \item If $r=1$ and $q \geq (p-1)^2+p,$ then \ $\exists \ y>N$ such that $\phi(y)\leq \phi(N).$ 
\end{enumerate}
\end{proposition}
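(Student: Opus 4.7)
The plan is to construct an explicit $y > N$ with $\phi(y) \leq \phi(N)$, using the construction indicated in the text preceding the statement. First I would write $N = q^r N'$ with $\gcd(q, N') = 1$, taking $r = v_q(N)$; since $p \neq q$ and $p \nmid N$, we also have $p \nmid N'$, and Lemma \ref{basic.totient.bound}(ii) gives $\phi(N) = (q-1)q^{r-1}\phi(N')$. Following the hint, the candidate is obtained by replacing the block $q^r$ of $N$ by the smallest multiple of $p$ strictly exceeding $q^r$:
\[
y := p \left\lceil \frac{q^r}{p} \right\rceil N'.
\]
Because $p \nmid q^r$, we have $p\lceil q^r/p \rceil > q^r$, and hence $y > N$ for free.

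To estimate $\phi(y)$, I would use $\phi(n) = n \prod_{\ell \in W(n)}(1 - 1/\ell)$ together with the inclusion $\{p\} \cup W(N') \subseteq W(y)$ (noting that $p \notin W(N')$). Since each factor $(1-1/\ell)$ lies in $(0,1)$, restricting the product to this subset only enlarges it, yielding
\[
\phi(y) \leq y \left(1 - \frac{1}{p}\right) \frac{\phi(N')}{N'} = (p-1) \left\lceil \frac{q^r}{p} \right\rceil \phi(N').
\]
Thus $\phi(y) \leq \phi(N)$ reduces to the arithmetic inequality $(p-1)\lceil q^r/p \rceil \leq (q-1)q^{r-1}$, which, after using $\lceil q^r/p \rceil \leq (q^r + p - 1)/p$ and a short simplification, further reduces to
\[
q^{r-1}(q - p) \geq (p-1)^2.
\]

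The three parts of the proposition now correspond to three regimes of $r$. For $r \geq 3$, one has $q^{r-1}(q - p) \geq q^2 \cdot 1 \geq (p+1)^2 > (p-1)^2$ since $q \geq p+1$, proving (1) with no additional hypothesis on $q$. For $r = 2$, the inequality becomes the quadratic $q^2 - pq - (p-1)^2 \geq 0$, whose larger root is precisely $(p + \sqrt{p^2 + 4(p-1)^2})/2$, giving (2). For $r = 1$, it collapses to the linear $q \geq p + (p-1)^2 = (p-1)^2 + p$, giving (3).

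The main obstacle is conceptual rather than technical: identifying the right perturbation of $N$. Natural alternatives such as $y = p^a N'$ or $y = p q^{r-1} N'$ either fail to exceed $N$ or do not yield the sharp thresholds, and schemes that alter several prime powers of $N$ simultaneously become unwieldy. The chosen $y = p \lceil q^r/p \rceil N'$ is the \emph{minimal} increment above $N$ that introduces $p$ as a new prime factor and removes the $q^r$-block. Crucially, the bound on $\phi(y)$ is insensitive to the prime factorization of $\lceil q^r/p \rceil$ — any extra prime factors it contributes only decrease $\phi(y)/y$ — so the clean threshold $q^{r-1}(q-p) \geq (p-1)^2$ is the sole requirement in each case.
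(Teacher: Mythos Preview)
Your proof is correct and follows essentially the same approach as the paper: your $p\lceil q^r/p\rceil$ is exactly the paper's $n=q^r+k$ (the smallest multiple of $p$ above $q^r$), and both arguments reduce to the identical inequality $q^{r-1}(q-p)\ge(p-1)^2$, treated by the same three-case analysis. The only cosmetic difference is that you set $r=v_q(N)$ at the outset (harmless, since $v_q(N)\ge r$ and the key inequality is monotone in $r$), whereas the paper carries the factor $q^{v_q(N)-r}$ throughout; your use of the Euler product to bound $\phi(y)$ is a slightly cleaner packaging of the paper's $I_1,I_2,S,T$ decomposition.
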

\begin{proof}
For $p,q\in\mathbb{P}$ with $p<q$ and  $r\in\mathbb{N},$ choose $k\in[1,p-1]$ such that $q^r+k\equiv 0 \pmod{p}.$
Since $k<p,$ it follows that $\gcd(q,q^r+k)=1.$ Now define
 \begin{align*}
 n:=q^r+k, \
  I_1:&=\{z\in W(N)\colon  z \mid n\}, ~
  I_2:=\{z\in W(N)\colon  z \nmid n \text{ and } z\neq q  \}. \end{align*}
Put $S = \prod_{z\in I_2}z^{v_z(N)}  \text{ and }  T=\prod_{z\in I_1}z^{v_z(N)}.$ Using the fact that $q^r \mid N,$
we get $v_q(N) \geq r.$ We set $y:=STnq^{v_q(N)-r}>N.$

Observe that $\phi(y)\leq q^{v_q(N)-r}\phi(STn)=q^{v_q(N)-r}\phi(S)\phi(Tn),$
 since $(Tn, S)=1.$
Using $I_1\subset W(n)$ along with $p \mid n$ and $p \notin I_1,$ we get
that $\phi(Tn)\leq \phi(T)\frac{p-1}{p}n$ by Lemma \ref{basic.totient.bound}. Since $n\leq q^r+p-1,$ we have $\phi(Tn)\leq \phi(T)\frac{p-1}{p}(q^r+p-1).$ Inserting this in the above estimate for $\phi(y),$ we 
find that
   \begin{equation}\phi(y)\leq q^{v_q(N)-r}\phi(ST)\left(\displaystyle\frac{p-1}{p}\right)(q^r+p-1).\label{p1detaileq1}\end{equation}                                                                                    
 \begin{equation}\displaystyle\frac{(q^r+p-1)(p-1)}{p}\leq q^{r-1}(q-1)\iff \begin{cases} (p-1)^2\leq q^{r-1}(q-p) &\mbox{if }r > 1, \\
q\geq(p-1)^2+p &\mbox{if }r=1. \end{cases}\label{p1detaileq2}\end{equation}
The right hand side of the equivalence in \eqref{p1detaileq2} is satisfied in the following cases.\\
\begin{inparaenum}[(I)]
 \item\label{p1detailcase1} $r>2,$ $q>p$ \item\label{p1detailcase2} $r=2,$ $q\geq \frac{p+\sqrt{p^2+4(p-1)^2}}{2}$ \item\label{p1detailcase3} $r=1,$ $q>p(p-1).$
\end{inparaenum}

Therefore, the inequality in \eqref{p1detaileq1} and the equivalence in \eqref{p1detaileq2} together  imply that  $\phi(y)\leq \phi(STq^{v_q(N)})=\phi(N)$ in the above cases \ref{p1detailcase1}, \ref{p1detailcase2} and \ref{p1detailcase3}. This completes the theorem.
\end{proof}
\noindent

We now proceed to prove Proposition \ref{p2} in which the lemma below will be crucial. 
\begin{lemma}
 For $p\in \mathbb{P},$  the sequence
 $\displaystyle\prod_{q\in\mathbb{P}, p<q<n}\displaystyle\frac{q}{q-1} \longrightarrow \infty \text{ as }~ n\longrightarrow \infty.$
\end{lemma}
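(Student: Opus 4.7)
The plan is to reduce to the divergence of the harmonic series via the Euler product identity; since removing the finitely many prime factors with $q \leq p$ only divides the product by a fixed nonzero constant, it suffices to show that $\prod_{q < n,\, q \in \mathbb{P}} \frac{q}{q-1} \to \infty$ as $n \to \infty$.

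First I would rewrite each factor geometrically: $\frac{q}{q-1} = (1 - 1/q)^{-1} = \sum_{k \geq 0} q^{-k}$. Expanding the finite product over primes $q < n$ by distributivity and collecting reciprocals, one sees that the resulting sum contains $1/m$ for every positive integer $m$ all of whose prime factors are $< n$. In particular, every $m \in [1, n-1]$ has all its prime factors bounded by $n$, so
\[
\prod_{q < n,\, q \in \mathbb{P}} \frac{q}{q-1} \ \geq\ \sum_{m=1}^{n-1} \frac{1}{m}.
\]
The right-hand side is a partial sum of the harmonic series and tends to infinity. Dividing through by the finite constant $\prod_{q \leq p,\, q \in \mathbb{P}} \frac{q}{q-1}$ then yields the lemma.

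An equivalent alternative would be to take logarithms and use the elementary bound $\log(1 + 1/(q-1)) \geq 1/(2q)$ for $q$ sufficiently large, then invoke Euler's classical theorem that $\sum_{q \in \mathbb{P}} 1/q$ diverges. Either route is consistent with the paper's goal of avoiding the prime number theorem; I prefer the Euler-product route because it is entirely self-contained and requires only the divergence of the harmonic series, which can be verified by elementary means. There is no substantive obstacle here: the only point requiring care is that the term-by-term expansion of the finite product is justified, and this is immediate because each geometric series $\sum_{k \geq 0} q^{-k}$ converges absolutely, so the product and sum may be interchanged.
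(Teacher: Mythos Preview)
Your argument is correct and matches the paper's proof almost exactly: both factor out the finitely many primes $q\le p$ as a constant, expand each remaining factor as a geometric series $\frac{q}{q-1}=\sum_{k\ge 0}q^{-k}$, and bound the resulting Euler product from below by the partial harmonic sum $\sum_{m=1}^{n-1}\frac{1}{m}$. The alternative you sketch via $\sum_q 1/q$ is not used in the paper, but your primary route is the same one it takes.
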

\begin{proof}
 Let $ I_n=\{x\in \mathbb{N}\colon q \mid x, q\in\mathbb{P} \Rightarrow q<n\}$ for $n\in \mathbb{N}$ and $C=\prod_{q\in\mathbb{P}\cap[2,p]}\frac{p-1}{p}.$ 
 $$\prod_{q\in\mathbb{P}, p<q<n}\displaystyle\frac{q}{q-1}=C \prod_{q\in\mathbb{P},q<n}\displaystyle\frac{q}{q-1}=C\displaystyle\prod_{q\in\mathbb{P}, q<n}\left(\displaystyle\sum_{r=0}^{\infty}\frac{1}{q^r}\right)
 =\sum_{k\in I_n}\displaystyle\frac{C}{k }\geq \sum_{k=1}^{n-1} \displaystyle\frac{C}{k},$$ which tends to $\infty$ 
 as $ n\longrightarrow \infty.$
\end{proof}

\noindent
It can be rephrased in the following manner which is used to show Proposition \ref{p2}.
\begin{corollary} \label{N1m_div_by_p_basic_lemma3}
 Let $p\in \mathbb{P}$ and $n\in\mathbb{N}.$ Then there exists a natural number $\beta(p,n)$
 such that for each prime  $t>\beta(p,n),$  $\phi(B_t)\leq n^{-1}B_t$ with $B_t=\prod_{q\in [p,t]\cap\mathbb{P}}q.$
\end{corollary}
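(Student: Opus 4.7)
The plan is to derive the corollary almost immediately from the preceding lemma by rewriting $B_t/\phi(B_t)$ as the kind of product handled there.

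First I would observe that $B_t$ is squarefree, so by the product formula for Euler's totient function,
\[
\frac{\phi(B_t)}{B_t} \;=\; \prod_{q \in [p,t] \cap \mathbb{P}} \left(1 - \frac{1}{q}\right),
\]
and hence
\[
\frac{B_t}{\phi(B_t)} \;=\; \prod_{q \in [p,t] \cap \mathbb{P}} \frac{q}{q-1}.
\]
So the statement $\phi(B_t) \leq n^{-1} B_t$ is equivalent to $\displaystyle\prod_{q \in [p,t] \cap \mathbb{P}} \frac{q}{q-1} \geq n$.

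Next I would relate this product to the one appearing in the previous lemma, namely $\displaystyle\prod_{q \in \mathbb{P},\, p < q < s} \frac{q}{q-1}$. The two products differ only by the bounded factor $\frac{p}{p-1}$ (from including $q = p$) together with at most one extra factor depending on whether $t$ itself is prime; in any event, for $t \geq p$ one has
\[
\prod_{q \in [p,t] \cap \mathbb{P}} \frac{q}{q-1} \;\geq\; \prod_{q \in \mathbb{P},\, p < q < t} \frac{q}{q-1}.
\]
The lemma asserts that the right-hand side tends to $\infty$ as $t \to \infty$, so the same holds for the left-hand side.

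Finally, given $p$ and $n$, the divergence furnishes some threshold $\beta(p,n) \in \mathbb{N}$ such that for every prime $t > \beta(p,n)$ the product is at least $n$, which is precisely the desired inequality $\phi(B_t) \leq n^{-1} B_t$. I do not anticipate a real obstacle here; the corollary is essentially a cosmetic reformulation of the lemma, the only minor care needed being to track that the extra boundary factor does not spoil the divergence.
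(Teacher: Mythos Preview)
Your proposal is correct and matches the paper's intended argument: the paper does not even give a separate proof of this corollary, introducing it with the sentence ``It can be rephrased in the following manner,'' so your derivation---rewriting $B_t/\phi(B_t)$ as the product $\prod_{q\in[p,t]\cap\mathbb{P}}\frac{q}{q-1}$ and invoking the divergence from the preceding lemma---is exactly the intended reformulation.
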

\begin{proof}[Proof of Proposition \ref{p2}]
Corollary \ref{N1m_div_by_p_basic_lemma3} gives
$\beta(p,q)\in\mathbb{P}$ such that 
\begin{equation}\phi(A)<q^{-1}A,\label{betapq}\end{equation}
where $A=\prod_{u\in[p,\beta(p,q)] \cap\mathbb{P}}u.$
Define $D_p(q):=\alpha$ where $\alpha$ is the unique non-negative integer satisfying $q^{\alpha}<A<q^{\alpha +1}.$ 
If $v_q(N)>\alpha,$ then choose $y:=Sq^{v_q(N)-\alpha}A,$ where  $S=Nq^{-v_q(N)}$ satisfies $\gcd(S,q)=1.$ Observe that 
 $y>N$ as $A>q^{\alpha}.$
Since the prime factors of $A$ are greater than or equal to $p,$ each prime factor of $N$ is less than $p$ and $\gcd(S,q)=1,$ it follows that the elements of $\{S,A,q\}$
are pairwise co-prime. Then
 $\phi(y)=\phi(S)\phi(q^{v_q(N)-\alpha})\phi(A).$
 Using equation \eqref{betapq} along with $A<q^{\alpha + 1}$ and Lemma \ref{basic.totient.bound}\eqref{gupta}, we get that $\phi(y)<\phi(N).$
This completes the proof.
\end{proof}
\begin{proof}[Proof of Theorem \ref{t1}]
To prove the sufficient part of the theorem, it is enough to assume that $n$ is a prime. Let $s$ be a prime and 
$s_0$ be the smallest prime greater than $(s - 1)^2 + s.$ If $N\in N_1$ such that $s\nmid N,$
then by Proposition \ref{p1detail}, $v_q(N) = 0 \ \forall \ q \geq s_0.$
Since $s < s_0,$ applying Proposition \ref{p2} with $p = s_0$ gives us $v_q(N)< D_s(q)$(depending only on $s$ and $q$)
and hence the `sufficient' part of the theorem follows.

To prove the converse, it is enough to show that $ Z_p = \prod_{q \leq p}q \in N_1$ $\forall$ $ p \in \mathbb{P}.$
Suppose for some $p,$ $Z_p \notin N_1.$ Then \ $\exists \ y > Z_p$ with $\phi(y) \leq \phi(Z_p).$ So, 
$\prod_{q \in W(y)}q^{v_q(y) - 1}(q - 1) \leq \prod_{q \leq p}(q - 1).$ Therefore $\left|W(y)\right| \leq \left|W(Z_p)\right|.$
Then the quantity
$Q:=\left(\prod_{q \in W(y)}\frac{q-1}{q}\right) \left(\prod_{q \leq p}\frac{q}{q-1}\right)\geq 1,$
because $x> p$ $\forall$ $x\in W(y)\setminus W(Z_p).$ 
It gives that
 $\frac{\phi(y)}{\phi(Z_p)} = \frac{yQ}{Z_p}\geq \frac{y}{Z_p}.$ Then 
 $y > Z_p$ implies $\phi(y)>\phi(Z_p).$ But, this is a contradiction to $\phi(y) \leq \phi(Z_p)$ and hence the theorem follows.
\end{proof}
\begin{proof}[Proof of Corollary \ref{t1corr1}]
By Proposition \ref{p1detail}, if $3 \nmid N_1(m)$ for some $m \in V,$ then $N_1(m)=2^a \cdot 5^b,$ with $a \geq 1, b \in \{0, 1\}.$
If $a>1,$ then $\phi(2^{a-1}\cdot 3 \cdot 5^b)=\phi(2^a \cdot 5^b),$ but $2^{a-1}\cdot 3 \cdot 5^b>2^a \cdot 5^b.$ Therefore,
$N_1(m)=2, 2\times 5.$ But $\phi(10)=\phi(12)$ and so $N_1(m) \neq 10.$ Hence, $N_1(m)=2,$ which implies that $m=1.$  
\end{proof}

There is a nice application of Theorem \ref{t1} to equations of the type
$N_1(m_0) = N_1(m_0 + 1) = \dots = N_1(m_0 + k)$  with $N_1(m_0 -1) < N_1(m_0)$ and $N_1(m_0 +k) < N_1(m_0+k+1).$
Recall the definition of the set $BN_1$:
$$ BN_1 = \{m \in V : N_1(m)=\max(\phi^{-1}(m))\}.$$ 
We observe that $m_0 \in BN_1.$ Note that the image of $BN_1$ under the function $h\colon x\mapsto N_1(x)$ is the whole of $N_1.$ We
enumerate the elements of $BN_1$ as $m_1, m_2, \cdots, m_t,..$ and so on.
We show that given any $n \in \mathbb{N},$ all large enough elements of $BN_1$ are divisible by $n.$ To show this, we make use 
of the following result:
\begin{lemma}( \cite[Theorem 2.17, page 88]{narkiewicz}). \label{simpledirichlet}
For any integer $n>1,$ there exists a prime $p \equiv 1 \pmod{n}.$ 
\end{lemma}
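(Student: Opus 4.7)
The plan is to give an elementary proof via cyclotomic polynomials, sidestepping the full strength of Dirichlet's theorem on primes in arithmetic progressions. Let $\Phi_n(x) \in \mathbb{Z}[x]$ denote the $n$-th cyclotomic polynomial, the monic polynomial of degree $\phi(n)$ whose roots are the primitive $n$-th roots of unity, so that $x^n - 1 = \prod_{d \mid n} \Phi_d(x)$ in $\mathbb{Z}[x]$.

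The first step will be to establish the key divisibility lemma: if $p$ is a prime, $a \in \mathbb{Z}$, and $p \mid \Phi_n(a)$, then either $p \mid n$ or $p \equiv 1 \pmod n$. Since $\Phi_n(a)$ divides $a^n - 1$, we get $a^n \equiv 1 \pmod p$, hence $\gcd(a, p) = 1$, and the multiplicative order $d$ of $a$ modulo $p$ divides $n$. If $d = n$, then Fermat's little theorem gives $d \mid p - 1$, so $n \mid p - 1$. If $d < n$, then both $\Phi_n(a) \equiv 0 \pmod p$ and $a^d - 1 = \prod_{e \mid d}\Phi_e(a) \equiv 0 \pmod p$ hold, which means $a$ is a double root of $x^n - 1$ modulo $p$; differentiating and using $p \nmid a$, this forces $na^{n-1} \equiv 0 \pmod p$ and hence $p \mid n$.

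The second step will be to exhibit an integer $a$ such that $\Phi_n(a) > 1$ and $\gcd(\Phi_n(a), n) = 1$, so that any prime divisor of $\Phi_n(a)$ is automatically $\equiv 1 \pmod n$ by the first step. Evaluating $x^n - 1 = \prod_{d \mid n} \Phi_d(x)$ at $x = 0$ gives $\prod_{d \mid n} \Phi_d(0) = -1$; combined with $\Phi_1(0) = -1$, a short induction on $n$ shows $\Phi_n(0) = 1$ for every $n \geq 2$. Therefore, putting $a = kn$ for a positive integer $k$, we have $\Phi_n(kn) \equiv \Phi_n(0) = 1 \pmod n$, so $\gcd(\Phi_n(kn), n) = 1$. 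Since $\deg \Phi_n = \phi(n) \geq 1$ for $n \geq 2$, the values $|\Phi_n(kn)|$ tend to infinity with $k$, and choosing $k$ large enough gives $|\Phi_n(kn)| > 1$. Any prime divisor $p$ of $\Phi_n(kn)$ then satisfies $p \equiv 1 \pmod n$.

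The only mildly delicate point I anticipate is the inductive verification that $\Phi_n(0) = 1$ for all $n \geq 2$; this is the one place where the multiplicative structure of $x^n - 1$ through its cyclotomic factorisation is exploited, though it is entirely elementary. Once the divisibility lemma and this computation of $\Phi_n(0)$ are in place, the remainder is a routine growth estimate on $|\Phi_n(kn)|$, and no analytic input is required.
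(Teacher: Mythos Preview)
Your argument is correct. The paper itself does not prove this lemma; it merely cites \cite[Theorem~2.17, p.~88]{narkiewicz} and uses the result as a black box in the proof of Corollary~\ref{t1corr2}. The proof you outline---produce a prime divisor of $\Phi_n(kn)$ for large $k$, use the fact that $\Phi_n(0)=1$ for $n\ge 2$ to ensure coprimality with $n$, and invoke the order/double-root dichotomy to conclude $p\equiv 1\pmod n$---is exactly the classical elementary argument via cyclotomic polynomials, and it is in fact the proof given in the cited reference. Both steps are sound: the induction $\prod_{d\mid n}\Phi_d(0)=-1$ with $\Phi_1(0)=-1$ does give $\Phi_n(0)=1$ for $n\ge 2$, and the double-root argument correctly forces $p\mid n$ when the order of $a$ is a proper divisor of $n$.
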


\begin{proof}[Proof of Corollary \ref{t1corr2}]
Consider the arithmetic progression $\{1+tn\}_{t \in \mathbb{N}}.$ By Lemma \ref{simpledirichlet}, there is a prime in this 
sequence, say $p,$ i.e., $1+k_0n=p$ for some $k_0 \in \mathbb{N}.$ So, $p-1=k_0n \Rightarrow n \mid (p-1).$ Using Theorem \ref{t1}, $\exists ~k_1$
such that $p \mid N_1(m_t) \ \forall \ t\geq k_1.$
So, $(p-1) \mid \phi(N_1(m_t)) = m_t \ \forall \ t \geq k_1.$ Hence $n \mid m_t$ for all $t\geq k_1.$
\end{proof}

Suppose $N\in N_1$ and there exists a prime $p, \ p \nmid N.$  
If there is a prime factor of $N$ greater than $p,$ then we have seen in Proposition \ref{p1detail} that $v_q(N) \leq 2.$
But the existence of such an $N$ in $N_1,$ which has holes in its prime factorization, is not guaranteed. In the next section, we show that there are infinitely many
elements of this type in $N_1.$ 

\section{\textbf{Explicit construction of sparsely totient numbers}}\label{section_construction_stn}
In this section, we explicitly construct several infinite families of elements in $N_1.$ As a corollary of these constructions,
we prove  a divisibility result analogous to Theorem \ref{t1} for non-squarefree integers. From the previous section, we can expect 
two types of elements, say $x$ and $y$, in $N_1.$ Here $x$ is divisible by all the primes smaller than some prime $p$ and $y$ has some ``holes''
in its prime factorization. The main tool in these proofs will be a generalization of the technique used in the proof of the 
`necessary' part of Theorem \ref{t1} (i.e., $\prod_{q\in [2,p]\cap\mathbb{P}}q\in N_1 \ \forall \ p\in \mathbb{P}).$ 

\subsection{Proof of Theorem \ref{t2}}
\begin{definition}[D(A,B)]
Let $A$ and $B$ be two finite subsets of $\mathbb{P}.$ Then 
\begin{align*}
 D(A,B):=\left(\prod_{q\in A}\frac{q-1}{q}\right)\left(\prod_{q\in B}\frac{q}{q-1}\right).
\end{align*}
\end{definition}
If $x \notin N_1,$ then $\exists \ y$ such that $y>x, \ \phi(y)\leq \phi(x).$ The following lemma gives the value of $D(W(y), W(x))$ in
this case. 
\begin{lemma} \label{Xnpmainlemma2}
 If $\phi(y)\leq \phi(x)$ and $y>x$ for  $y,x\in \mathbb{N},$ then 
 $D(W(y),W(x))$ $< 1.$ 
\end{lemma}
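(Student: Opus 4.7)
The plan is to derive the inequality directly from Euler's product formula for $\phi$. Writing $\phi(y) = y \prod_{q \in W(y)}\bigl(1 - \tfrac{1}{q}\bigr)$ and $\phi(x) = x \prod_{q \in W(x)}\bigl(1 - \tfrac{1}{q}\bigr)$, I would take the ratio and recognize that
\[
\frac{\phi(y)}{\phi(x)} = \frac{y}{x}\cdot \prod_{q \in W(y)}\frac{q-1}{q}\cdot \prod_{q \in W(x)}\frac{q}{q-1} = \frac{y}{x}\, D(W(y), W(x)).
\]
This is the whole content of the argument, since the definition of $D(A,B)$ matches the product that appears naturally when one forms the ratio.

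Next, I would invoke the two hypotheses. From $\phi(y) \leq \phi(x)$ it follows that $\tfrac{\phi(y)}{\phi(x)} \leq 1$, and substituting the identity above yields
\[
D(W(y), W(x)) \leq \frac{x}{y}.
\]
Then $y > x$ gives $\tfrac{x}{y} < 1$, so $D(W(y), W(x)) < 1$, as desired. Note that $x,y \geq 1$ ensures $\phi(x), \phi(y) > 0$ so the division is legal, and if $x=1$ or $y=1$ the corresponding product over $W(\cdot)$ is empty (equal to $1$ by the convention recorded in the notation), which poses no issue.

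There is no real obstacle here: the statement is essentially a reformulation of the identity $\phi(n) = n\prod_{p\mid n}(1 - p^{-1})$, and the only thing one has to notice is that the definition of $D$ is tailored precisely to capture $\phi(y)/\phi(x)$ divided by $y/x$. The lemma will then be used downstream to constrain the prime supports $W(y)$ that can arise in the construction of $X_{n,p}$ and $Y_{p,k}$, where showing $D(W(y), W(x)) \geq 1$ for all candidate $y > x$ forces $x \in N_1$.
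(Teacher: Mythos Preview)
Your proof is correct and follows exactly the same approach as the paper: both use the Euler product formula to write $\frac{\phi(y)}{\phi(x)} = \frac{y}{x}\, D(W(y),W(x))$ and then apply the two hypotheses to conclude. Your version is simply a bit more explicit about the positivity and empty-product checks.
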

\begin{proof}
Using Euler product formula for $x$ and $y,$ we have $\frac{\phi(y)}{\phi(x)}=\frac{y D(W(y),W(x))}{x}.$
  Applying  $\phi(y)\leq \phi(x)$ and $y>x,$ we get $D(W(y),W(x))<1.$
  \end{proof}

For an $x \in \mathbb{N}\setminus \{1\},$ if we are able to show that $D(W(y),W(x)) \geq 1$ for all $y$ satisfying $y>x$ and $\phi(y)\leq\phi(x),$ then it means that $x \in N_1.$ The two lemmas
below help us in this regard.

\begin{lemma} \label{Xnplemma_D(A,B)1}
Let $A$ and $B$ be two finite subsets of $\mathbb{P}$ such that $|B|\leq|A|.$ 
If $\min(B\setminus A)>\max(A)$ or $B\subset A,$ then $D(B,A)\geq1.$
\end{lemma}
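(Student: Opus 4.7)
My plan is to handle the two hypotheses separately, exploiting the cancellation of common prime factors in $A\cap B$ and the monotonicity of $q \mapsto q/(q-1)$.

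First I would rewrite
\[
D(B,A) \;=\; \prod_{q\in B}\frac{q-1}{q}\,\prod_{q\in A}\frac{q}{q-1},
\]
and split $A = C\sqcup A'$, $B = C\sqcup B'$, where $C := A\cap B$, $A' := A\setminus B$, $B' := B\setminus A$. The factors over $C$ cancel, leaving
\[
D(B,A)\;=\;\prod_{q\in B'}\frac{q-1}{q}\,\prod_{q\in A'}\frac{q}{q-1}.
\]
The hypothesis $|B|\le|A|$ gives $|B'|\le|A'|$.

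In the first case, when $B\subset A$, we have $B'=\varnothing$, so $D(B,A) = \prod_{q\in A'}\frac{q}{q-1}\ge 1$ since each factor exceeds $1$. This disposes of the easy case.

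In the second case, when $\min(B\setminus A) > \max(A)$, I would use that the function $f(q) = q/(q-1) = 1 + 1/(q-1)$ is strictly decreasing on the primes. Since every $b\in B'$ exceeds every $a\in A'$ (as $\max(A')\le \max(A) < \min(B')$), and $|B'|\le|A'|$, I can injectively match each $b\in B'$ with a distinct partner $a\in A'$, obtaining $f(a) > f(b)$, i.e.\ $\frac{a}{a-1} > \frac{b}{b-1}$, so $\frac{b-1}{b}\cdot\frac{a}{a-1} > 1$, for each matched pair. The unmatched elements of $A'$ contribute further factors $\frac{q}{q-1}>1$. Multiplying these inequalities gives $D(B,A)\ge 1$, as required.

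There is essentially no obstacle here; the only thing to be careful about is the direction of monotonicity of $q/(q-1)$ and the bookkeeping ensuring that $|B'|\le|A'|$ follows from $|B|\le|A|$ (both sides lose $|C|$), so that an injection $B'\hookrightarrow A'$ exists. Everything else is algebraic manipulation.
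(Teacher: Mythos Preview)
Your proof is correct and follows essentially the same approach as the paper: both construct an injection from the $B$-side into the $A$-side and use that $q\mapsto q/(q-1)$ is decreasing to compare matched factors. The only cosmetic difference is that you first cancel the common part $C=A\cap B$ and then inject $B'\hookrightarrow A'$, whereas the paper builds a single injection $f:B\to A$ with $f|_{A\cap B}=\mathrm{id}$; after the identical factors cancel, the arguments coincide.
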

\begin{proof}
 For the case $B \subset A,$  $D(B, A) = \left(\prod_{q \in A\setminus B}\frac{q}{q-1}\right) \geq 1.$  
In the case when $B \not\subset A,$ one can define an injective map $f\colon B\rightarrow A$ such that $f(x)=x$ for $x\in A\cap B,$ since $|B|\leq|A|.$
Using $\min(B\setminus A)>\max(A),$ it follows that
 $f(x)\leq x~\forall ~x\in B.$ Therefore, $$D(B,A)\geq \prod_{x\in B}\frac{(x-1)f(x)}{x(f(x)-1)}=\prod_{x\in B}\frac{xf(x)-f(x)}{xf(x)-x}\geq 1.$$
 \end{proof}

\begin{lemma} \label{Xnplemma1}
 Let $A,B$ be two finite subsets of $\mathbb{P}$ such that $|A|\leq |B|$ and $A\neq B.$ If $\min(B\setminus A)>\max(A),$
 then $ \prod_{q\in B}(q-1)> \prod_{q\in A} (q-1).$
\end{lemma}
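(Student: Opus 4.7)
The plan is to reduce the inequality to a comparison between the two symmetric differences and then exhibit a carefully chosen injection from one to the other that strictly increases the product of $(q-1)$ factors. After cancelling the common factors indexed by $A\cap B$, the target inequality
\[
\prod_{q\in B}(q-1)>\prod_{q\in A}(q-1)
\]
is equivalent to
\[
\prod_{q\in B\setminus A}(q-1)>\prod_{q\in A\setminus B}(q-1).
\]
So from here on I work only with the symmetric differences, whose cardinalities satisfy $|A\setminus B|\leq |B\setminus A|$ thanks to the hypothesis $|A|\leq|B|$ (since $|A\setminus B|=|A|-|A\cap B|$ and $|B\setminus A|=|B|-|A\cap B|$).

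First I would dispose of the easy case $A\setminus B=\varnothing$. Then $A\subsetneq B$, so $B\setminus A$ is non-empty, and every $q\in B\setminus A$ exceeds $\max(A)\geq 2$, hence $q\geq 3$ and $q-1\geq 2$. The right-hand side is the empty product $1$, so the strict inequality follows at once.

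Next, in the main case $A\setminus B\neq\varnothing$, I would fix any injection $f\colon A\setminus B\hookrightarrow B\setminus A$, which exists by the cardinality inequality noted above. For every $q\in A\setminus B$, the hypothesis $\min(B\setminus A)>\max(A)$ forces $f(q)>\max(A)\geq q$, and since $f(q)$ and $q$ are distinct integers this means $f(q)-1\geq q>q-1$. Writing $B\setminus A$ as the disjoint union of $f(A\setminus B)$ and its complement inside $B\setminus A$, and using that each factor $q-1$ in the latter complement is $\geq 1$, I get
\[
\prod_{q\in B\setminus A}(q-1)\;\geq\;\prod_{q'\in A\setminus B}\bigl(f(q')-1\bigr)\;>\;\prod_{q'\in A\setminus B}(q'-1),
\]
which is exactly what is needed.

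There is no serious obstacle here; the only subtleties are the edge cases. One must check that when $A\setminus B=\varnothing$ we do not run into trouble with $2\in B\setminus A$ making a factor equal to $1$, and this is precisely where the condition $\min(B\setminus A)>\max(A)$ (together with $A\neq B$ forcing $A$ to be non-empty, since $B\neq\varnothing$ and the hypothesis $|A|\leq |B|$ allows $A$ empty only when the statement is trivial) is used to guarantee $q\geq 3$ for every $q\in B\setminus A$. With these points in hand the argument is just the injection-and-comparison sketched above.
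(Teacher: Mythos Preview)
Your argument is correct and is essentially the same as the paper's: both construct an injection from (the relevant part of) $A$ into $B$ that strictly increases each factor $q-1$, using $\min(B\setminus A)>\max(A)$ to guarantee $f(q)>q$. The only cosmetic difference is that you first cancel $A\cap B$ and inject $A\setminus B\hookrightarrow B\setminus A$, whereas the paper injects $f\colon A\to B$ with $f$ the identity on $A\cap B$; these are equivalent formulations.

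One small remark: your parenthetical about the edge case $A=\varnothing$ is slightly garbled (the condition $A\neq B$ does not by itself force $A\neq\varnothing$). The cleanest way to handle this is simply to observe that the hypothesis $\min(B\setminus A)>\max(A)$ presupposes $A\neq\varnothing$, since $\max(\varnothing)$ is not defined; with $A\neq\varnothing$ your bound $\max(A)\geq 2$ and hence $q\geq 3$ for $q\in B\setminus A$ is fully justified. The paper's own ``trivially true'' for $A\subsetneq B$ relies on exactly the same implicit assumption.
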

\begin{proof}
 If $A$ is a proper subset of $B,$ then the result is trivially true. On the other hand,                 
if $A\setminus B$ is non-empty, define an injective map $f\colon A\rightarrow B$ such that $f(x)=x$ for $x\in A\cap B.$ This can be
done as $|A| \leq |B|.$ Then $f(x)> x ~ \forall ~ x\in A\setminus B$ as $\min(B\setminus A)>\max(A).$ Hence
 $ \prod_{q\in B}(q-1)\geq \prod_{q\in f(A)} (q-1) = \prod_{q\prime \in A} (f(q\prime)-1)> \prod_{q\in A} (q-1).$
\end{proof}

We will now proceed to construct elements of the type $X_{n, p}$ in $N_1.$ For this purpose, we use two parameters $K(n, y)$ and $L(n, y).$ 

\begin{definition}
 Let $n,y\in\mathbb{N}.$ Then, the quantity  $K(n,y)$ is defined by $q$-valuations as $v_{q}(K(n,y)):=v_q(y),$ if $v_q(n)>0,$ and $v_{q}(K(n,y)):=0,$ otherwise.
The quantity $L(n,y)$ is  defined as follows:
\begin{align*} 
  L(n,y):=\left(\prod_{q|K(n,y)}q^{v_q(n)-v_q(y)}\right)\left(\displaystyle\prod_{q|n,  q\nmid K(n,y)}q^{v_{q}(n)-1}\right).
 \end{align*}
\end{definition} 

\begin{lemma}\label{Lnby2}
 If $n\in \mathbb{N}\setminus \{1\},$ then $L(n,y)\leq \displaystyle\frac{n}{2}$ for each $y\in \mathbb{N}.$
 \end{lemma}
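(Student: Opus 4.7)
The plan is to compute the ratio $L(n,y)/n$ prime by prime directly from the definition, verify that every per-prime factor is at most $q^{-1}$, and then close the argument using the fact that $n$ has at least one prime divisor. To begin, I would unpack the indexing sets that appear in $L(n,y)$: by the definition of $K(n,y)$, a prime $q$ divides $K(n,y)$ precisely when $q \mid n$ and $q \mid y$, so the condition ``$q \mid n$ and $q \nmid K(n,y)$'' is equivalent to ``$q \mid n$ and $q \nmid y$''. Thus the two products defining $L(n,y)$ are indexed by disjoint sets of primes whose union is exactly $W(n)$.

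Next, I would match $q$-adic contributions to write $L(n,y)/n$ explicitly. Comparing with $n = \prod_{q\mid n} q^{v_q(n)}$, each prime $q\mid n$ with $q\mid y$ contributes a factor $q^{v_q(n)-v_q(y)}$ to $L(n,y)$ and hence a factor $q^{-v_q(y)}$ to the ratio, while each prime $q\mid n$ with $q\nmid y$ contributes $q^{v_q(n)-1}$ to $L(n,y)$ and hence $q^{-1}$ to the ratio. Since $v_q(y)\geq 1$ in the first case, both possibilities give a per-prime factor at most $q^{-1}$, so multiplying over all $q\mid n$ yields
\[
\frac{L(n,y)}{n} \leq \prod_{q\mid n} q^{-1}.
\]

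Finally, since $n>1$, it admits a smallest prime divisor $p_0\geq 2$, so the right-hand side is bounded by $p_0^{-1}\leq 1/2$, giving $L(n,y)\leq n/2$ as desired. There is no real obstacle here: the argument is pure definitional bookkeeping, and the only subtlety is checking that the two disjoint products in the definition of $L(n,y)$ together account for every prime divisor of $n$ exactly once, with no overlap and no omission.
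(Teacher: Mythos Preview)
Your proof is correct and follows essentially the same approach as the paper: both compute the $q$-adic contributions of $L(n,y)$ relative to $n$ and use that $n>1$ has at least one prime divisor $\geq 2$. Your version is slightly cleaner in that you bound every prime factor uniformly by $q^{-1}$ to obtain $L(n,y)/n \leq \prod_{q\mid n} q^{-1}$, whereas the paper splits into cases according to whether $A_2=\{q:q\mid K(n,y)\}$ or $A_3=\{q:q\mid n,\,q\nmid K(n,y)\}$ is nonempty and extracts a single factor of $q_0^{-1}$ or $q_1^{-1}$; but the underlying bookkeeping is the same.
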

\begin{proof}
For $n,y\in \mathbb{N},$ define 
  $A_1:=\{ q\in \mathbb{P}\colon q\mid n\},$
  $A_2:=\{ q\in \mathbb{P}\colon q\mid K(n,y)\}$ and 
  $A_3:=\{ q\in \mathbb{P}\colon q\mid n \text{ and } q\nmid K(n,y)\}.$
Observe that  $A_1=A_2\cup A_3.$
Since $n\geq 2,$ it follows that $A_1\neq\varnothing$ and hence $A_2\neq \varnothing$ or $A_3\neq\varnothing.$ If $A_2\neq\varnothing,$ there exists a 
 $q_0\in \mathbb{P}$ such that $v_{q_0}(K(n,y))\geq 1.$ It follows that $v_{q_0}(y)\geq 1.$ 
If $A_3\neq\varnothing,$ there exists $q_1\in A_3.$ 
Hence,
\begin{align*}
 L(n,y)&= \left(\prod_{q\in A_2}q^{v_q(n)-v_q(y)}\right)\left(\displaystyle\prod_{q\in A_3}q^{v_{q}(n)-1}\right)
 \leq \begin{cases}\displaystyle\frac{1}{q_0}\prod_{q \in A_1}q^{v_q(n)}  &\mbox{if $A_2\neq\varnothing$},\\
 \displaystyle\frac{1}{q_1}\prod_{q \in A_1}q^{v_q(n)} &\mbox{if $A_3\neq\varnothing$}.\\ \end{cases}
\end{align*}
Since $q_0, q_1 \geq 2$ and $n=\displaystyle\prod_{q \in A_1}q^{v_q(n)},$ we get $L(n,y)\leq \displaystyle\frac{n}{2}.$
\end{proof}

\begin{lemma} \label{Xnpmainlemma1}
Suppose that $p\in\mathbb{P},$ $y\in\mathbb{N}$ and $n \in \mathbb{N}\setminus \{1\}$ satisfy $p>\displaystyle\frac{n}{2}.$ Then 
 $\phi(y)\leq \phi(X_{n,p})  \Rightarrow D(W(y),W(X_{n,p}))\geq1.$ \end{lemma}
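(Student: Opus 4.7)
The plan is to apply Lemma \ref{Xnplemma_D(A,B)1} with $A=W(X_{n,p})$ and $B=W(y)$; this requires verifying either $B\subseteq A$, or $|B|\leq|A|$ together with $\min(B\setminus A)>\max(A)$.

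The first task is to pin down $A$ explicitly. The hypothesis $p>n/2$ forbids any prime $q>p$ from dividing $X_{n,p}$: if $q\mid n$ and $q>p$, then $q>n/2$ forces $n/q<2$, hence $n=q$, so $v_q(X_{n,p})=v_q(n)-1=0$. Together with the trivial inclusion $\{q\in\mathbb{P}\colon q\leq p\}\subseteq W(X_{n,p})$, this gives $A=\{q\in\mathbb{P}\colon q\leq p\}$ and hence $\phi(X_{n,p})=\tfrac{n}{\mathrm{rad}(n)}\prod_{q\in A}(q-1)$, where $\mathrm{rad}(n)=\prod_{q\mid n}q$. Combining this with the trivial bound $\phi(y)\geq\prod_{q\in B}(q-1)$ and the hypothesis $\phi(y)\leq\phi(X_{n,p})$ yields
\begin{equation*}
\frac{\prod_{q\in B\setminus A}(q-1)}{\prod_{q\in A\setminus B}(q-1)}=\frac{\prod_{q\in B}(q-1)}{\prod_{q\in A}(q-1)}\leq\frac{n}{\mathrm{rad}(n)}\leq\frac{n}{2}<p,
\end{equation*}
where the final strict inequality uses $\mathrm{rad}(n)\geq 2$ (since $n\geq 2$) and $p>n/2$.

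If $B\subseteq A$ the conclusion follows at once from Lemma \ref{Xnplemma_D(A,B)1}. Otherwise every element of $B\setminus A$ is a prime exceeding $p=\max(A)$, so $\min(B\setminus A)>\max(A)$ is automatic; the main obstacle is then the cardinality bound $|B|\leq|A|$, and this is precisely where the hypothesis $p>n/2$ really bites. I plan to argue by contradiction: supposing $|B|>|A|$, the bounds $q-1\geq p$ for $q\in B\setminus A$ and $q-1\leq p-1$ for $q\in A\setminus B$ give
\begin{equation*}
\frac{\prod_{q\in B\setminus A}(q-1)}{\prod_{q\in A\setminus B}(q-1)}\geq\frac{p^{|B\setminus A|}}{(p-1)^{|A\setminus B|}}\geq p^{|B\setminus A|-|A\setminus B|}=p^{|B|-|A|}\geq p,
\end{equation*}
contradicting the previous display. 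Hence $|B|\leq|A|$, and a second appeal to Lemma \ref{Xnplemma_D(A,B)1} completes the argument.
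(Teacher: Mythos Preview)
Your proof is correct and follows the same overall architecture as the paper's: establish that $W(X_{n,p})=\{q\in\mathbb{P}:q\le p\}$, derive from $\phi(y)\le\phi(X_{n,p})$ a bound of the shape $\prod_{q\in W(y)}(q-1)\le\tfrac{n}{2}\prod_{q\le p}(q-1)$, use this to force $|W(y)|\le|W(X_{n,p})|$, and finish with Lemma~\ref{Xnplemma_D(A,B)1}.

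The execution differs in two places, both simplifications. First, the paper reaches the factor $n/2$ via the auxiliary quantities $K(n,y)$, $L(n,y)$ and Lemma~\ref{Lnby2}; you bypass all of that with the one-line observation $\phi(X_{n,p})=\tfrac{n}{\mathrm{rad}(n)}\prod_{q\le p}(q-1)$ and $\mathrm{rad}(n)\ge 2$. Second, for the cardinality contradiction the paper peels off one prime $r>p$ from $W(y)$ and invokes Lemma~\ref{Xnplemma1} to compare the remaining products, concluding $r-1\le n/2<p$; your direct estimate $\prod_{q\in B\setminus A}(q-1)/\prod_{q\in A\setminus B}(q-1)\ge p^{|B|-|A|}\ge p$ achieves the same contradiction without that lemma. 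Your route is thus self-contained modulo Lemma~\ref{Xnplemma_D(A,B)1}, whereas the paper's proof leans on three preparatory lemmas; the paper's $L(n,y)$ machinery does not buy anything extra here.
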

\begin{proof}
Note that  since $p>\frac{n}{2}$, we have
\begin{align*}
\phi(y)&=\displaystyle \prod_{q|K(n,y)}q^{v_q(y)-1}\displaystyle \prod_{\substack{q\nmid K(n,y) \\ q\in W(y)}}q^{v_{q}(y)-1}\displaystyle\prod_{q\in W(y)}(q-1)\text{ and }\\
 \phi(X_{n,p})&=\prod_{q|n}q^{v_{q}(n)-1}\prod_{q\in W(X_{n,p})}(q-1). 
\end{align*}

 \begin{align*}
\therefore \phi(y)\leq \phi(X_{n,p})&\Rightarrow  \prod_{\substack{q\nmid K(n,y) \\ q\in W(y)}}q^{v_{q}(y)-1}\displaystyle\prod_{q\in W(y)}(q-1)\leq L(n,y)\prod_{q\in W(X_{n,p})}(q-1)\\ 
&\Rightarrow \displaystyle\prod_{q\in W(y)}(q-1)\leq L(n,y)\prod_{q\in W(X_{n,p})}(q-1).
 \end{align*} Since $n \in \mathbb{N}\setminus \{1\},$ by Lemma \ref{Lnby2}, we have
$\displaystyle\prod_{q\in W(y)}(q-1)\leq \frac{n}{2}\prod_{q\in W(X_{n,p})}(q-1).$
 If $|W(y)|>|W(X_{n,p})|,$ then $\exists$ $r \in \mathbb{P}$ such that $r>p$ and $r\in W(y).$ This gives 
$$(r-1)\displaystyle\prod_{q\in W(y),q\neq r}(q-1)\leq \frac{n}{2}\prod_{q\in W(X_{n,p})}(q-1).$$
Applying Lemma \ref{Xnplemma1}, we get $r-1\leq \displaystyle\frac{n}{2}$ which implies that $p\leq \displaystyle\frac{n}{2},$ 
a contradiction. Therefore $|W(y)|\leq |W(X_{n,p})|.$
Applying Lemma \ref{Xnplemma_D(A,B)1}, we get the desired result.
\end{proof}
From the above lemma and Lemma \ref{Xnpmainlemma2}, we get the following proposition:
\begin{proposition}\label{p3}
 $X_{n,p}\in N_1$ for $n\in\mathbb{N}$ and $p \in \mathbb{P}$ with $p > \frac{n}{2}.$
\end{proposition}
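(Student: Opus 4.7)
My plan is to prove $X_{n,p} \in N_1$ by contradiction, using Lemmas \ref{Xnpmainlemma2} and \ref{Xnpmainlemma1} as the two jaws of a vise on the quantity $D(W(y), W(X_{n,p}))$.

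First I would dispose of the trivial case $n = 1$. When $n = 1$, the product $n \prod_{q \mid n} q^{-1}$ is empty, so $X_{1,p} = \prod_{q \leq p} q$ is precisely the primorial $Z_p$ considered in the converse direction of Theorem \ref{t1}. That argument already establishes $Z_p \in N_1$, so there is nothing further to check when $n=1$ and I may assume $n \geq 2$.

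For $n \geq 2$ I would suppose, for contradiction, that $X_{n,p} \notin N_1$. By the definition of $N_1$ this produces some $y \in \mathbb{N}$ with $y > X_{n,p}$ and $\phi(y) \leq \phi(X_{n,p})$. Now Lemma \ref{Xnpmainlemma2}, applied with $x = X_{n,p}$, uses the pair $y > X_{n,p}$ and $\phi(y) \leq \phi(X_{n,p})$ to force $D(W(y), W(X_{n,p})) < 1$. On the other hand, Lemma \ref{Xnpmainlemma1}, whose standing hypotheses $n \geq 2$ and $p > n/2$ are exactly those of the proposition, uses only $\phi(y) \leq \phi(X_{n,p})$ to force $D(W(y), W(X_{n,p})) \geq 1$. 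These two conclusions contradict each other, so no such $y$ can exist and $X_{n,p} \in N_1$.

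There is essentially no obstacle here, because the substantive work has all been front-loaded into Lemma \ref{Xnpmainlemma1}: that lemma is where the numerical threshold $p > n/2$ gets used (through the bound $L(n,y) \leq n/2$ of Lemma \ref{Lnby2} to rule out an extra large prime in $W(y) \setminus W(X_{n,p})$), and where the combinatorial comparisons of Lemmas \ref{Xnplemma_D(A,B)1} and \ref{Xnplemma1} convert the resulting inclusion pattern on $W(y)$ into the inequality $D(W(y), W(X_{n,p})) \geq 1$. With those pieces in hand, the proof of Proposition \ref{p3} is just a one-line logical assembly.
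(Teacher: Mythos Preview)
Your proof is correct and follows the same approach as the paper, which simply notes that Proposition \ref{p3} follows from combining Lemma \ref{Xnpmainlemma1} with Lemma \ref{Xnpmainlemma2}. You have in fact been slightly more careful than the paper: Lemma \ref{Xnpmainlemma1} is stated only for $n \in \mathbb{N}\setminus\{1\}$, so your separate treatment of $n=1$ via $X_{1,p}=Z_p$ and the converse part of Theorem \ref{t1} fills a small gap that the paper's one-line deduction glosses over.
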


Now, we shall investigate elements of the type $Y_{p, k},$ which have ``holes'' in their prime factorization. This is achieved 
by the series of lemmas below.

\begin{lemma}\label{Ypk_lemma4}
 If $y, k\in \mathbb{N}, p \in \mathbb{P}\cap [3,\infty)$ and $ \phi(y)\leq \phi(Y_{p,k}),$ then $|W(y)|\leq |W(Y_{p,k})|.$
\end{lemma}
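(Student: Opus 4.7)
The plan is to prove this by contradiction. Assume $|W(y)| > |W(Y_{p,k})|$ and derive a contradiction with $\phi(y) \leq \phi(Y_{p,k})$. The key idea is that $Y_{p,k}$ is squarefree and has a very specific set of prime divisors, so $\phi(Y_{p,k})$ can be written out explicitly, while any $y$ with too many prime divisors is forced to have $\phi(y)$ too large.

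First I would observe that, by the definition of $Y_{p,k}$, the prime divisor set $W(Y_{p,k}) = ([2,p_k]\cap\mathbb{P})\setminus\{p\}$, so $|W(Y_{p,k})| = \pi(p_k) - 1$ and
\[
\phi(Y_{p,k}) \;=\; \prod_{q\in W(Y_{p,k})}(q-1) \;=\; \frac{1}{p-1}\prod_{q\in[2,p_k]\cap\mathbb{P}}(q-1).
\]
Next I would use the general lower bound $\phi(y) \geq \prod_{q\in W(y)}(q-1)$, which follows at once from the product formula for $\phi$.

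Now suppose for contradiction that $|W(y)| \geq \pi(p_k)$. Enumerating $W(y) = \{q_1 < q_2 < \cdots < q_m\}$ with $m \geq \pi(p_k)$, one has $q_j \geq P_j$, where $P_j$ denotes the $j$th smallest prime, simply because the first $j-1$ primes cannot exhaust $\{P_1,\dots,P_{j-1}\}$ below $q_j$. Hence
\[
\prod_{q\in W(y)}(q-1) \;\geq\; \prod_{j=1}^{m}(P_j-1) \;\geq\; \prod_{j=1}^{\pi(p_k)}(P_j-1) \;=\; \prod_{q\in[2,p_k]\cap\mathbb{P}}(q-1),
\]
where the second inequality uses that the extra factors $P_j-1$ for $j>\pi(p_k)$ are all $\geq 1$. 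Combining with the expression for $\phi(Y_{p,k})$ above yields
\[
\phi(y) \;\geq\; (p-1)\,\phi(Y_{p,k}) \;\geq\; 2\,\phi(Y_{p,k}) \;>\; \phi(Y_{p,k}),
\]
using the hypothesis $p\geq 3$. This contradicts $\phi(y)\leq\phi(Y_{p,k})$, completing the argument.

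The proof is essentially an optimisation: the set $W(Y_{p,k})$ is almost the smallest possible prime support for its size, so adding even one more prime to the support forces $\phi$ to grow by at least a factor of $p-1 \geq 2$. I do not anticipate a real obstacle here; the only thing to be careful about is the comparison $q_j \geq P_j$ and the trivial observation that omitting the prime $p$ from $[2,p_k]\cap\mathbb{P}$ costs exactly a factor $p-1$ in the totient, which is where the hypothesis $p \geq 3$ enters decisively.
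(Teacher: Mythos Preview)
Your proof is correct. The key steps---the lower bound $\phi(y)\geq\prod_{q\in W(y)}(q-1)$, the inequality $q_j\geq P_j$ for the $j$th smallest element of $W(y)$, and the identity $\phi(Y_{p,k})=(p-1)^{-1}\prod_{q\in[2,p_k]\cap\mathbb{P}}(q-1)$---are all valid, and they combine exactly as you describe to give $\phi(y)\geq(p-1)\phi(Y_{p,k})>\phi(Y_{p,k})$ under the assumption $|W(y)|\geq\pi(p_k)$.

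Your route differs from the paper's in organisation. The paper sets $B=W(y)\setminus\{p\}$ and argues by cases on the relationship between $B$ and $W(Y_{p,k})$: when $B\not\subset W(Y_{p,k})$ it invokes the auxiliary Lemma~\ref{Xnplemma1} (a comparison of products $\prod(q-1)$ over two prime sets) to force $|B|<|W(Y_{p,k})|$; when $B\subset W(Y_{p,k})$ the size bound is immediate unless $B=W(Y_{p,k})$ and $p\in W(y)$, in which case the factor $p-1$ gives the contradiction. Your argument bypasses both the case split and Lemma~\ref{Xnplemma1} by directly comparing $W(y)$ to the first $\pi(p_k)$ primes, which is the global minimiser of $\prod(q-1)$ among prime sets of that size. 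This is more self-contained and arguably cleaner; the paper's version has the advantage of reusing machinery (Lemma~\ref{Xnplemma1}) that is already in place for other parts of the argument.
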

\begin{proof}
 Let $B=W(y)\setminus \{p\}.$ If $B\not \subset W(Y_{p,k}),$ then $\min(B\setminus W(Y_{p,k}))>\max(W(Y_{p,k})).$ Since $ \phi(y)\leq \phi(Y_{p,k}),$ we have
$\prod_{q\in B}(q-1)\leq \prod_{q\in W(Y_{p,k})}(q-1).$ By Lemma \ref{Xnplemma1}, it follows that $|B|<|W(Y_{p,k})|$ and hence
$|W(y)|\leq |W(Y_{p,k})|.$
 If $B$ is a proper subset of $W(Y_{p,k})$ or $B=W(Y_{p,k}) \text{ with }$ $p \not\in W(y),$ then $|W(y)|\leq |W(Y_{p,k})|.$
 
 If
$B=W(Y_{p,k})$ and $p \in W(y),$ then
$\phi(y)=\prod_{q \in W(y)}q^{v_q(y)-1}(q-1)$ $\geq$ $(p-1) \phi(Y_{p, k}) > \phi(Y_{p, k})$ as $p \geq 3,$ a contradiction.
Hence, $|W(y)|\leq |W(Y_{p,k})|.$
\end{proof}

\begin{lemma}\label{Ypk_lemma5}
Suppose $y, k\in \mathbb{N}$ and $p\in \mathbb{P}$ satisfy $|W(y)|\leq |W(Y_{p,k})|.$ If there exists a prime $s\in [2,p]$ such that $s\notin W(y),$ then 
$D(W(y), W(Y_{p,k}))\geq 1 .$
\end{lemma}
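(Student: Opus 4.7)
The plan is to reduce the inequality to the construction of a suitable injection, in the spirit of Lemma \ref{Xnplemma_D(A,B)1}. Writing
\[
D(W(y), W(Y_{p,k})) = \prod_{q \in B'}\frac{q-1}{q}\cdot \prod_{q \in A'}\frac{q}{q-1}
\]
with $B' := W(y)\setminus W(Y_{p,k})$ and $A' := W(Y_{p,k}) \setminus W(y)$, and exploiting the fact that $x \mapsto \frac{x}{x-1}$ is decreasing on primes, it suffices to produce an injection $f \colon B' \to A'$ satisfying $f(q) \leq q$ for every $q \in B'$. The cardinality condition $|B'| \leq |A'|$ follows at once from $|W(y)| \leq |W(Y_{p,k})|$ by subtracting the common part $|W(y) \cap W(Y_{p,k})|$ from both sides.

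The key structural fact is that $W(Y_{p,k}) = ([2, p_k] \cap \mathbb{P}) \setminus \{p\}$, so any prime outside $W(Y_{p,k})$ is either equal to $p$ or strictly larger than $p_k$. I would split into two cases. If $p \notin W(y)$, then every element of $B'$ exceeds $p_k$ while every element of $A'$ lies in $[2, p_k]$, so any injection $f \colon B' \to A'$ (which exists by the cardinality comparison) automatically satisfies $f(q) \leq p_k < q$. If instead $p \in W(y)$, then the hypothesis $s \notin W(y)$ forces $s \neq p$, hence $s < p$, and in particular $s$ is a prime in $[2, p_k] \setminus \{p\}$ with $s \notin W(y)$, i.e.\ $s \in A'$. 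Setting $f(p) := s$ handles the only element of $B'$ that is not greater than $p_k$; the remaining elements of $B' \setminus \{p\}$ are all primes larger than $p_k$, so extending $f$ to any injection $B' \setminus \{p\} \to A' \setminus \{s\}$ (possible since $|B'|-1 \leq |A'|-1$) gives $f(q) \leq p_k < q$ for each such $q$.

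The expected main obstacle is precisely the case $p \in W(y) \setminus W(Y_{p,k})$: here $B'$ contains the prime $p$, which need not exceed $\max(A') \leq p_k$, so Lemma \ref{Xnplemma_D(A,B)1} does not apply off the shelf. The role of the prime $s$ in the hypothesis is exactly to furnish a valid target in $A'$ for this problematic element, thereby restoring a size-decreasing injection. With $f$ in hand the conclusion is immediate via the factorwise estimate $(q-1)f(q) \geq q(f(q)-1)$ whenever $f(q) \leq q$, which telescopes to $D(W(y), W(Y_{p,k})) \geq \prod_{q \in B'}\frac{(q-1)f(q)}{q(f(q)-1)} \geq 1$.
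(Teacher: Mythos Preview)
Your proof is correct and follows essentially the same approach as the paper: split on whether $p\in W(y)$, and in the problematic case pair $p$ with the missing small prime $s$ to recover a size-decreasing injection. The only cosmetic difference is that you build the injection $f\colon B'\to A'$ directly, whereas the paper strips $p$ from $W(y)$ and $s$ from $W(Y_{p,k})$, applies Lemma~\ref{Xnplemma_D(A,B)1} to the reduced sets, and then multiplies back the correction factor $\frac{s(p-1)}{(s-1)p}\geq 1$.
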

\begin{proof}
 If $s=p\notin W(y),$ then $\min(W(y)\setminus W(Y_{p,k}))>\max(W(Y_{p,k}))$ or $W(y)\subset W(Y_{p,k}).$
 Then Lemma \ref{Xnplemma_D(A,B)1} along with  $|W(y)|\leq |W(Y_{p,k})|$ ensure the result in this case.
 Suppose $p\in W(y)$ and $s\in [2,p)$ be a prime such that $s\notin W(y) .$ 
 Define  $$B:=W(y)\setminus\{p\}, \ C:=W(Y_{p,k})\setminus\{s\}.$$ Observe that $|B|\leq |C|$ as $|W(y)|\leq |W(Y_{p,k})|.$
 Since $s\not \in W(y),$ it follows that  $\min(B\setminus C)>\max(C)$ or $B\subset C.$ \ 
 So, $D(B,C)\geq$ 1 by Lemma \ref{Xnplemma_D(A,B)1}. Therefore,
 \begin{align*}
D(W(y),W(Y_{p,k}))& = D(B,C)\left(\frac{s(p-1)}{(s-1)p}\right)
\geq \frac{sp-s}{sp-p} >1 ~~\text{ as } s<p.
       \end{align*}
\end{proof}
Now we shall state an important quantitative result due to Nagura on primes in short intervals to characterize elements of 
the type $Y_{p, k}$ in $N_1.$ 
\begin{proposition}[Nagura \cite{nagura}] \label{Jisturo.prime.interval}
Let $n$ be a positive integer greater than $25.$ Then the interval $(n, 1.2n)$ contains a prime.
\end{proposition}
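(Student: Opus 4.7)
The plan is to adapt the elementary Erd\H{o}s-style proof of Bertrand's postulate to the tighter ratio $6/5$. Writing $\vartheta(x) := \sum_{p \le x} \log p$, the conclusion is equivalent to showing that $\vartheta(6n/5) - \vartheta(n) > 0$ for every integer $n > 25$, since this difference is a finite sum of $\log p$ over primes $p \in (n, 6n/5]$ and is therefore positive exactly when that set is non-empty.

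The core of the argument is to pit a Chebyshev-type upper bound on $\vartheta$ against a lower bound obtained from a well-chosen integer. For the upper bound, the product of primes in $(x, 2x]$ divides $\binom{2x}{x} \le 4^x$, which after telescoping yields $\vartheta(x) \le (2\log 2)\, x$, and sharper explicit constants can be extracted by iterating the same idea. For the lower bound, I would analyse $\binom{6n}{n}$. By Legendre's/Kummer's formula, each prime $p \le 6n$ contributes at most $\log(6n)$ to $\log\binom{6n}{n}$, and a careful dissection shows that primes lying in certain ``dead intervals'' (determined by base-$p$ carry conditions in the sum $n + 5n$) contribute nothing. Together with the Stirling estimate $\log\binom{6n}{n} \sim (6\log 6 - 5\log 5)\, n \approx 2.70\, n$, this forces the surviving mass --- carried precisely by primes in $(n, 6n/5]$ --- to grow linearly in $n$, so $\vartheta(6n/5) - \vartheta(n) \gg n$ once $n$ is large enough.

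Comparing the two estimates gives $\vartheta(6n/5) - \vartheta(n) > 0$ for every $n$ exceeding some explicit threshold $n_0$. The remaining finite range $26 \le n \le n_0$ is then dispatched by direct verification: for each such $n$ one tabulates an explicit prime in $(n, 1.2n)$, which is feasible because $n_0$ will be modest.

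The main obstacle will be purely quantitative. The constants in both the upper bound on $\vartheta$ and the lower bound on $\log\binom{6n}{n}$ must be tight enough that the positivity of $\vartheta(6n/5) - \vartheta(n)$ becomes effective already near $n = 26$, not merely asymptotically. This forces a careful accounting of the contributions to $\binom{6n}{n}$ coming from prime powers $p^k \le 6n$ with $k \ge 2$ and from small primes $p \le \sqrt{6n}$, so that the proportion of $\log\binom{6n}{n}$ attributable to ``large'' primes can be separated out with a usable explicit constant, followed by the short numerical check over the residual range.
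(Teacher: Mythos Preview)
The paper does not prove this proposition; it is simply quoted from Nagura's 1952 paper and used as a black box, so there is no in-paper argument to compare your plan against.

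That said, your plan contains a substantive error in the dissection step. You assert that, after removing the ``dead intervals'', the surviving contribution to $\log\binom{6n}{n}$ from primes $p>n$ is carried precisely by the primes in $(n,6n/5]$. This is false. For a prime $p$ with $n<p\le 6n$ the exponent of $p$ in $\binom{6n}{n}$ equals $\lfloor 6n/p\rfloor-\lfloor 5n/p\rfloor$, and a short case check shows this equals $1$ not only on $(n,6n/5]$ but also on $(5n/4,3n/2]$, $(5n/3,2n]$, $(5n/2,3n]$, and $(5n,6n]$. These extra intervals carry most of the mass (their combined length exceeds $2n$, against $n/5$ for your target interval), so you cannot attribute the $\approx 2.70\,n$ size of $\log\binom{6n}{n}$ to $\vartheta(6n/5)-\vartheta(n)$. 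To rescue the approach you would need explicit upper bounds on each of these other contributions, and with only the crude Chebyshev inequality $\vartheta(x)\le(2\log 2)x$ the constants do not close: the upper bounds on the unwanted intervals already exceed $2.70\,n$. Nagura's own proof does not analyse $\binom{6n}{n}$; he works with the combination $T(x)-T(x/2)-T(x/3)-T(x/5)+T(x/30)$, where $T(x)=\log(\lfloor x\rfloor!)$, which produces matched upper and lower bounds on $\psi$ sharp enough to force a prime in $(x,6x/5)$ once $x\ge 25$, followed by a short finite verification. If you want to push an Erd\H{o}s-style binomial argument through, you will need a comparably refined combination rather than a single $\binom{6n}{n}$.
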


\begin{lemma}\label{Naguralemma}
Let $p_i$ be the $i\text{th}$ smallest prime greater than $p.$ Then, $p_3 < \frac{9}{5}p \ \forall \ p \geq 11$
and $p_1 < \frac{9}{5}p \ \forall \ p \geq 5.$ 
\end{lemma}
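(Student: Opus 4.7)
The plan is to bootstrap from Nagura's theorem (Proposition~\ref{Jisturo.prime.interval}). The decisive numerical observation is that $(6/5)^3 = 216/125 = 1.728 < 1.8 = 9/5$, so three iterations of Nagura's interval bound suffice to control $p_3$, while a single iteration controls $p_1$.

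First I would dispose of the range $p > 25$. Applying Proposition~\ref{Jisturo.prime.interval} with $n = p$ yields a prime in $(p, (6/5)p)$, hence $p_1 < (6/5)p < (9/5)p$, which settles the $p_1$ bound on this range. Since $p_1 > p > 25$, I may apply Nagura again with $n = p_1$ to get $p_2 < (6/5)p_1 < (6/5)^2 p$, and once more with $n = p_2 > 25$ to obtain $p_3 < (6/5)^3 p = (216/125)p < (9/5)p$.

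For the remaining small primes I would enumerate. For the bound on $p_1$ with $p \geq 5$, one checks $p \in \{5, 7, 11, 13, 17, 19, 23\}$ by listing $p_1$ in each case (the worst ratio, at $p = 7$, is $p_1/p = 11/7 \approx 1.571 < 9/5$). For the bound on $p_3$ with $p \geq 11$, one checks $p \in \{11, 13, 17, 19, 23\}$: for instance $p = 11$ gives $p_3 = 19 < 19.8$, and $p = 23$ gives $p_3 = 37 < 41.4$. Both lists are finite and finish the proof.

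There is really no obstacle here beyond the routine finite case check; the entire content of the lemma is the elementary inequality $(6/5)^3 < 9/5$ combined with Nagura's estimate, plus a tiny numerical table for small primes.
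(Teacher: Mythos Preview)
Your argument is correct and matches the paper's proof essentially line for line: iterate Nagura's bound three times for $p>25$ using $(6/5)^3<9/5$, and verify the finitely many small primes by hand. The paper is merely terser, omitting the explicit justification that $p_1,p_2>25$ and the small-prime table you supply.
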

\begin{proof}
Let $p>25$ be a prime. Then by Proposition \ref{Jisturo.prime.interval}, we have $p_1<1.2p, \ p_2<(1.2)^{2}p$ and $p_3<(1.2)^{3}p.$
Hence $p_3 < \frac{9}{5}p.$ For primes less than twenty-five, one can easily verify the truth of the lemma.
\end{proof}

From Lemma \ref{Ypk_lemma4}, $y$ has at most $k-1$ prime factors greater than $p$ in the case when $W(y)\supset [2,p]\cap \mathbb{P}.$ The following lemma describes the situation
when $y$ has exactly $k-1$ prime factors greater than $p.$ 

\begin{lemma} \label{Ypk_Lemma3}
   Let $k \in \mathbb{N}, \ k \leq 3.$ If $p \in \mathbb{P}$ and $y \in \mathbb{N}$ satisfy
  \begin{inparaenum}[(a)]
   \item $W(y)\supset [2,p]\cap \mathbb{P},$
   \item $|W(y)\cap(p,\infty)|=k-1$ and 
   \item $y$ is not squarefree,
   \end{inparaenum}
   then $\phi(y) > \phi(Y_{p, 1}) \ \forall \ p \geq 5$ and $\phi(y) > \phi(Y_{p, k}) \ \forall \ p \geq 11, k\in\{2,3\}.$
   
\end{lemma}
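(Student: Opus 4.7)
The plan is to write $\phi(y)$ and $\phi(Y_{p,k})$ in terms of their prime factorizations and then use the non-squarefreeness hypothesis to extract an explicit numerical gain of at least a factor of $2$. Concretely, by hypothesis (a)--(b) I may list the primes dividing $y$ as
\[
W(y)=\bigl([2,p]\cap\mathbb{P}\bigr)\cup\{r_1<r_2<\cdots<r_{k-1}\},\qquad r_i>p,
\]
and since each $r_i$ is the $i$th prime strictly exceeding $p$ in $W(y)$, I have $r_i\ge p_i$. Writing $M(y):=\prod_{q\in W(y)} q^{v_q(y)-1}$, one gets
\[
\phi(y)=M(y)\Bigl(\prod_{q\in[2,p)\cap\mathbb{P}}(q-1)\Bigr)(p-1)\prod_{i=1}^{k-1}(r_i-1),
\]
whereas $\phi(Y_{p,k})=\bigl(\prod_{q\in[2,p)\cap\mathbb{P}}(q-1)\bigr)\prod_{i=1}^{k}(p_i-1)$.

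Next I invoke (c): since $y$ is not squarefree, some $v_q(y)\ge 2$, so $M(y)\ge 2$ (the smallest prime being $2$). Combining with $r_i\ge p_i$,
\[
\frac{\phi(y)}{\phi(Y_{p,k})}\ge\frac{2(p-1)\prod_{i=1}^{k-1}(r_i-1)}{\prod_{i=1}^{k}(p_i-1)}\ge\frac{2(p-1)}{p_k-1}.
\]
Therefore it suffices to establish the single inequality $2(p-1)>p_k-1$, i.e.\ $p_k<2p-1$, in each of the three cases.

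For $k=1$ I need $p_1<2p-1$ for $p\ge 5$: Lemma~\ref{Naguralemma} gives $p_1<\tfrac{9}{5}p$, and $\tfrac{9p}{5}<2p-1\iff p>5$, so the inequality holds for $p\ge 7$; the single case $p=5$ is verified by $p_1=7<9$. For $k=3$ and $p\ge 11$, Lemma~\ref{Naguralemma} yields $p_3<\tfrac{9}{5}p<2p-1$, the last inequality being equivalent to $p>5$. For $k=2$ and $p\ge 11$ the bound follows from $p_2<p_3<\tfrac{9}{5}p<2p-1$. (Equivalently, one can iterate Proposition~\ref{Jisturo.prime.interval} once to get $p_2<1.44\,p$ when $p>25$, and handle $11\le p\le 25$ by inspection.) This finishes the argument in all three cases.

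The only real subtlety is pinning down the worst configuration of exponents: among all non-squarefree $y$ with the prescribed prime support, $M(y)$ is minimized precisely when exactly one prime has multiplicity $2$ and it is the prime $2$, giving the tight factor $M(y)=2$. Any other configuration (larger exponent, or the doubled prime being $\ge 3$, or the doubled prime being one of the $r_i$) only strengthens the inequality. Thus the single numerical gain ``$2$'' is what must be matched against the Bertrand/Nagura-type bound $p_k<2p-1$, and the choice of the thresholds $p\ge 5$ (for $k=1$) and $p\ge 11$ (for $k=2,3$) in the statement is exactly what is needed to make $\tfrac{9p}{5}<2p-1$ combined with the appropriate case of Lemma~\ref{Naguralemma}.
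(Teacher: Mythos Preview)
Your proof is correct and follows essentially the same approach as the paper: both arguments extract a factor of at least $2$ from the non-squarefreeness hypothesis, use $r_i\ge p_i$ to cancel the first $k-1$ factors, reduce to the inequality $2(p-1)$ versus $p_k-1$, and finish with the Nagura-type bound $p_k<\tfrac{9}{5}p$ from Lemma~\ref{Naguralemma}. The only cosmetic differences are that the paper argues by contradiction (assuming $\phi(y)\le\phi(Y_{p,k})$ and deriving $p_k\ge 2p-1\ge\tfrac{9}{5}p$), and that you package the extra prime-power factors into $M(y)$ while the paper simply picks a single prime $r$ with $v_r(y)>1$.
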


\begin{proof}
Let $p\in\mathbb{P}.$ Let $p_i$ be the $i\text{th}$ smallest prime greater than $p$ for $i=1,2, \dots, k$
and $(q_i)_{i=1}^{k-1}$ be $k-1$ primes greater than $p$ in increasing order such that
\begin{align*}
y&=\left(\displaystyle \prod_{q\leq p}q^{v_q(y)}\right)\prod_{i=1}^{k-1}q_i^{v_{q_i}(y)} \text{ with } ~ v_q(y), v_{q_i}(y)\geq 1 \ \text{ and }
Y_{p,k}=\left(\prod_{q<p}q\right)\prod_{i=1}^{k}p_i.
\end{align*}

If possible, let $\phi(y) \leq \phi(Y_{p, k}).$ Since $y$ is not squarefree, $\ \exists \ r \in W(y)$ such that $v_r(y)>1.$  Then 
 $ \phi(y)\leq \phi(Y_{p,k})\Rightarrow r(p-1)\prod_{i=1}^{k-1}(q_i-1)\leq \prod_{i=1}^{k}(p_i-1).$ 
This implies that $2(p-1)\leq (p_k-1)$ i.e. $p_k\geq 2p-1$ using the facts that $r\geq 2$ and $q_i \geq p_i$ for $i\in[1,k-1].$ 
Then $p_k\geq \frac{9}{5}p$ $\forall \ p\geq 5$ which contradicts  
 Lemma \ref{Naguralemma}. 
Therefore, $\phi(y) > \phi(Y_{p, 1})$ $\forall$ $p \geq 5$ and $\phi(y) > \phi(Y_{p, k})$ $\forall$ $p \geq 11,~ k\in \{2,3\}.$

\end{proof}
\begin{proposition} \label{t2Y1}
For any prime $p \geq 5, \ Y_{p, 1}\in N_1.$ 
\end{proposition}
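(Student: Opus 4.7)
The plan is to argue by contradiction. Suppose $Y_{p,1}\notin N_1$; then there exists $y>Y_{p,1}$ with $\phi(y)\leq \phi(Y_{p,1})$, and Lemma \ref{Xnpmainlemma2} forces $D(W(y),W(Y_{p,1}))<1$. I intend to eliminate every possible shape of $W(y)$ using the lemmas already established, so that no such $y$ can exist. The first step is to apply Lemma \ref{Ypk_lemma4} with $k=1$ (permitted since $p\geq 5\geq 3$) to obtain the cardinality bound $|W(y)|\leq |W(Y_{p,1})|=\pi(p_1)-1=\pi(p)$.

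Next I split on whether every prime in $[2,p]$ divides $y$. In the first case, if some prime $s\in[2,p]$ fails to divide $y$, Lemma \ref{Ypk_lemma5} (whose hypothesis $|W(y)|\leq|W(Y_{p,1})|$ we just verified) yields $D(W(y),W(Y_{p,1}))\geq 1$, contradicting the strict inequality from Lemma \ref{Xnpmainlemma2}. In the second case, $W(y)\supseteq [2,p]\cap\mathbb{P}$; since both sides have cardinality $\pi(p)$ and $|W(y)|\leq\pi(p)$, equality must hold, i.e., $W(y)=[2,p]\cap\mathbb{P}$. I now split once more according to whether $y$ is squarefree.

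If $y$ is not squarefree, Lemma \ref{Ypk_Lemma3} applied with $k=1$ (valid since $p\geq 5$ and $|W(y)\cap(p,\infty)|=0$) gives $\phi(y)>\phi(Y_{p,1})$, another contradiction. If instead $y$ is squarefree, then $y=\prod_{q\leq p}q$, and the strict inequality $y>Y_{p,1}=p_1\prod_{q<p}q$ collapses to $p>p_1$, which is absurd because $p_1$ is by definition the next prime after $p$. Having ruled out all possibilities for $y$, the assumption $Y_{p,1}\notin N_1$ is untenable, so $Y_{p,1}\in N_1$. The main obstacle is not any single hard estimate but rather arranging the case split so that the three tools---the cardinality lemma, the $D$-inequality lemma, and the non-squarefree lemma---cover every configuration of $W(y)$ exhaustively; the squarefree subcase, although elementary, is the one that genuinely requires the definition of $Y_{p,1}$ via the \emph{next} prime $p_1$.
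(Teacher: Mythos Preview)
Your proof is correct and follows essentially the same route as the paper: contradiction via Lemma \ref{Xnpmainlemma2}, cardinality bound from Lemma \ref{Ypk_lemma4}, then the two-way split handled by Lemma \ref{Ypk_lemma5} and Lemma \ref{Ypk_Lemma3}. The only difference is cosmetic---you spell out the squarefree/non-squarefree subcases explicitly, whereas the paper compresses them into a single sentence concluding $y=\prod_{q\leq p}q<Y_{p,1}$.
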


\begin{proof}
Let $Y_{p, 1} \notin N_1$ for some $p \geq 5.$ Then, there exists 
$y\in \mathbb{N}$ such that $\phi(y)\leq\phi(Y_{p,1})$ and $y>Y_{p,1}.$ Then Lemma \ref{Xnpmainlemma2}
gives $D(W(y), W(Y_{p,1}))< 1$ and  Lemma \ref{Ypk_lemma4} gives $|W(y)|\leq |W(Y_{p,1})|.$ If there exists a prime $s \in [2, p]$ such that $s \notin W(y),$
then by Lemma \ref{Ypk_lemma5}, we have $D(W(y), W(Y_{p,1}))\geq 1,$ a contradiction.
On the other hand, if $W(y) \supset [2, p] \cap \mathbb{P},$
then Lemma \ref{Ypk_Lemma3} together with $\phi(y) \leq \phi(Y_{p, 1})$ and $|W(y)|\leq |W(Y_{p,1})|$ implies that
$y=\prod_{q \leq p}q < Y_{p, 1},$ a contradiction. 
Hence, $Y_{p, 1} \in N_1$ $\forall$ $p \geq 5.$
\end{proof}

In the next two lemmas, for $k=2, 3,$ we analyze the situation where $y$ contains at most $k-2$ prime factors greater than $p.$
\begin{lemma} \label{Ypk_Lemma1}
 For $k=2$ or $3$ and $p \geq 11,$ $(p-1)p_{k-1}p_k > p(p_{k-1}-1)(p_k-1),$
 where $p_i$ is the $i\text{th}$ smallest prime greater than $p.$
 \end{lemma}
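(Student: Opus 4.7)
The plan is to reduce the inequality to a clean form and then invoke the Nagura-type estimate in Lemma \ref{Naguralemma}.

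First I would expand and simplify. Writing out $p(p_{k-1}-1)(p_k-1) = p \cdot p_{k-1}p_k - p \cdot p_{k-1} - p \cdot p_k + p$ and subtracting this from $(p-1)p_{k-1}p_k$, the inequality
\[
(p-1)p_{k-1}p_k > p(p_{k-1}-1)(p_k-1)
\]
is seen to be equivalent to $p(p_{k-1}+p_k-1) > p_{k-1}p_k$, which after completing the product rearranges to the more transparent form
\[
(p_{k-1}-p)(p_k-p) < p(p-1).
\]
This reformulation is attractive because it isolates the prime gaps $p_{k-1}-p$ and $p_k-p$ on the left, exactly the quantities we can control with Nagura.

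Next I would apply Lemma \ref{Naguralemma}: for $p \geq 11$, one has $p_k < \tfrac{9}{5}p$ whenever $k \leq 3$, so for $k \in \{2,3\}$ both $p_{k-1}-p$ and $p_k-p$ are strictly less than $\tfrac{4}{5}p$. Hence
\[
(p_{k-1}-p)(p_k-p) < \tfrac{16}{25}p^2.
\]

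Finally, I would compare $\tfrac{16}{25}p^2$ with $p(p-1)$. The inequality $\tfrac{16}{25}p^2 \leq p(p-1)$ is equivalent to $p \geq \tfrac{25}{9}$, which is easily satisfied since $p \geq 11$. Chaining the two bounds gives the desired conclusion.

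There is no real obstacle here, the entire argument is a one-line algebraic reduction followed by a direct application of the prime-gap bound from Proposition \ref{Jisturo.prime.interval} (via Lemma \ref{Naguralemma}). The only mild subtlety is making sure the reformulation step is reversible (it is, being a chain of equivalent rearrangements valid for positive $p, p_{k-1}, p_k$) and checking that the Nagura bound is available in the exact range $k \in \{2, 3\}$ used by the lemma statement, which it is.
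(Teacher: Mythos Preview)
Your proof is correct and follows essentially the same strategy as the paper: an algebraic rearrangement of the inequality followed by the Nagura bound $p_k < \tfrac{9}{5}p$ from Lemma~\ref{Naguralemma}. The only difference is the form of the rearrangement: the paper rewrites the claim as $p > \dfrac{p_{k-1}p_k}{p_{k-1}+p_k-1}$ and then uses the extra observation $p_k > p_{k-1}+1$ to bound this by $\tfrac{p_k}{2} < p$, whereas your reformulation $(p_{k-1}-p)(p_k-p) < p(p-1)$ isolates the prime gaps directly and dispenses with that auxiliary step.
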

\begin{proof}

For $k\in \{2,3\},$ Lemma \ref{Naguralemma} gives $p_k <\frac{9}{5}p$ $\forall$ $p \geq 11.$ This along with  $p_k > p_{k-1}+1,$
 gives $ \frac{p_{k-1}p_k}{p_k+p_{k-1}-1}< \frac{p_k}{2}<p, \text{ i.e., } (p-1)p_{k-1}p_k > p(p_{k-1}-1)(p_k-1).$
\end{proof}

\begin{lemma} \label{Ypk_Lemma2}
  Let $k = 2$ or $3.$ If $p \in \mathbb{P}, \ p \geq 11$ and $y \in \mathbb{N}$ satisfy
  \begin{inparaenum}[(i)]
   \item $W(y)\supset [2,p] \cap \mathbb{P}$ and
   \item $|W(y)\cap(p,\infty)|\leq k-2.$
   \end{inparaenum}
   Then we have  $\phi(y)\leq \phi(Y_{p,k})\Rightarrow y\leq Y_{p,k}. $
\end{lemma}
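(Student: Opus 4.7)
The plan is to apply (the contrapositive of) Lemma \ref{Xnpmainlemma2}: combined with the hypothesis $\phi(y) \leq \phi(Y_{p,k})$, showing that $D(W(y), W(Y_{p,k})) \geq 1$ will force $y \leq Y_{p,k}$. Cancelling the primes common to both $W(y)$ and $W(Y_{p,k})$ in the definition of $D$, this reduces to proving
\[
\prod_{q \in W(y) \setminus W(Y_{p,k})} \frac{q-1}{q} \;\geq\; \prod_{q \in W(Y_{p,k}) \setminus W(y)} \frac{q-1}{q}.
\]

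To make the two symmetric differences explicit, I would set $T := W(y) \cap (p, \infty)$, so that $|T| \leq k - 2$ by hypothesis (ii), while $[2, p] \cap \mathbb{P} \subset W(y)$ by hypothesis (i) and $W(Y_{p,k}) = ([2, p_k] \cap \mathbb{P}) \setminus \{p\}$ by definition. A direct check then gives
\[
W(y) \setminus W(Y_{p,k}) = \{p\} \cup \bigl(T \cap (p_k, \infty)\bigr), \qquad W(Y_{p,k}) \setminus W(y) = \{p_1, \ldots, p_k\} \setminus T,
\]
and the inequality to be proved becomes
\[
\frac{p-1}{p} \prod_{r \in T \cap (p_k, \infty)} \frac{r-1}{r} \;\geq\; \prod_{i : p_i \notin T} \frac{p_i - 1}{p_i}.
\]

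What remains is a short case analysis on the at most one-element set $T$, whose only non-trivial inputs are Lemma \ref{Ypk_Lemma1} and the elementary monotonicity of $q \mapsto (q-1)/q$. For $k = 2$ one must have $T = \emptyset$, and the inequality is exactly Lemma \ref{Ypk_Lemma1} with $k = 2$. For $k = 3$, either $T = \emptyset$ or $T = \{r\}$ for a prime $r > p$. In the subcase $r > p_k = p_3$ one uses $(r-1)/r > (p_3-1)/p_3$ to absorb the $(p_3-1)/p_3$ factor on the right and reduce to Lemma \ref{Ypk_Lemma1} with $k = 2$; for $r = p_1$ the required two-factor inequality is Lemma \ref{Ypk_Lemma1} with $k = 3$; for $r = p_3$ it is Lemma \ref{Ypk_Lemma1} with $k = 2$; and for $T = \emptyset$ one simply drops the redundant $(p_1-1)/p_1 < 1$ factor and invokes $k = 3$. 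The only mildly delicate sub-case is $T = \{p_2\}$, which leaves the \emph{non-consecutive} pair $\{p_1, p_3\}$ on the right; here I would upgrade $p_1$ to $p_2$ via the monotonicity $(p_1-1)/p_1 < (p_2-1)/p_2$ and then apply Lemma \ref{Ypk_Lemma1} with $k = 3$. This covers all possibilities and completes the proof.
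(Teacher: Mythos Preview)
Your argument is correct: the contrapositive of Lemma \ref{Xnpmainlemma2} reduces the claim to $D(W(y),W(Y_{p,k}))\ge 1$, your computation of the two symmetric differences is accurate, and the five-way case split on $T$ (for $k=3$) is exhaustive and each case is handled properly via monotonicity of $q\mapsto (q-1)/q$ together with the appropriate instance of Lemma \ref{Ypk_Lemma1}.

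The paper proves the same lemma by a different, more uniform route. Instead of computing $D$ and splitting on the shape of $T$, it writes $y=\phi(y)\prod_{q\le p}\frac{q}{q-1}\prod_{i=1}^{l}\frac{q_i}{q_i-1}$ (with $q_1<\cdots<q_l$ the primes in $T$, $l\le k-2$), plugs in $\phi(y)\le\phi(Y_{p,k})$, and uses $q_i\ge p_i$ together with the fact that $q\mapsto q/(q-1)$ is decreasing to obtain in one stroke the single bound
\[
y \;\le\; Y_{p,k}\cdot\frac{p\,(p_{k-1}-1)(p_k-1)}{(p-1)\,p_{k-1}\,p_k},
\]
after which a single application of Lemma \ref{Ypk_Lemma1} finishes. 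The trade-off is clear: the paper's argument avoids any case analysis and would scale painlessly to larger $k$, while your approach is more explicit about exactly which inequality is needed in each configuration of $T$ but requires separate bookkeeping (and separate invocations of Lemma \ref{Ypk_Lemma1} with both $k=2$ and $k=3$) for every sub-case.
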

\begin{proof}
Let $p_i$ be the $i\text{th}$ smallest prime greater than $p$ and 
$q_1<q_2<\dots<q_l$ be the $l \ (\leq k-2)$ prime factors
of $y$ greater than $p.$ (In the case when $l=0,$ the rightmost product in the equation below is an empty product with value $1.$)
Then, $$y=\phi(y)\left(\prod_{q\leq p}\frac{q}{q-1}\right)\left(\prod_{i=1}^{l}\frac{q_i}{q_i-1}\right).$$
Inserting $\phi(y)\leq \phi(Y_{p,k})$ in the above equation, we get
$$y\leq\phi(Y_{p,k})\left(\prod_{q\leq p}\frac{q}{q-1}\right)\left(\prod_{i=1}^{l}\frac{q_i}{q_i-1}\right).$$
By the definition of $q_i$ and $p_i,$ we have $q_i\geq p_i$ for $i\in [1,l].$ Using this along with the Euler product formula for $Y_{p,k}$ in the above equation, we have
$$y\leq Y_{p,k}\left(\frac{p(p_{k-1}-1)(p_k-1)}{(p-1)p_{k-1}p_k}\right).$$
Using Lemma \ref{Ypk_Lemma1}, we obtain $y\leq Y_{p,k}.$ 
\end{proof}

 We shall see, for $k=2,$ which of the $Y_{p, k}$ are in $N_1.$ Recall $E(2), D(p)$ and $A(p)$ from Definition \ref{family_name}.
 
 \begin{proposition}\label{Yp2noprimes}
  Let $p\geq 11$ be a prime and $p\in E(2),$ then 
$Y_{p, 2}\in N_1.$
 \end{proposition}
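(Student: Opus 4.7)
My plan is to argue by contradiction: suppose $Y_{p,2}\notin N_1$, so there exists $y\in\mathbb{N}$ with $y>Y_{p,2}$ and $\phi(y)\leq\phi(Y_{p,2})$. By Lemma \ref{Xnpmainlemma2} this forces $D(W(y),W(Y_{p,2}))<1$, and by Lemma \ref{Ypk_lemma4} we have $|W(y)|\leq |W(Y_{p,2})|$. I will then show that all possibilities for $W(y)$ lead to a prime inside $(A(p),A(p)+D(p)]$, violating $p\in E(2)$.

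First I would rule out that some prime $s\in[2,p]$ is missing from $W(y)$: Lemma \ref{Ypk_lemma5} (with $k=2$) would give $D(W(y),W(Y_{p,2}))\geq 1$, contradicting the inequality above. Thus $W(y)\supset[2,p]\cap\mathbb{P}$, and by Lemma \ref{Ypk_lemma4} the number of primes of $y$ exceeding $p$ is $0$ or $1$. The case $0$ is handled by Lemma \ref{Ypk_Lemma2} (with $k=2$), which yields $y\leq Y_{p,2}$, a contradiction. If there is exactly one prime $r>p$ dividing $y$, then Lemma \ref{Ypk_Lemma3} (with $k=2$, $p\geq 11$) forces $y$ to be squarefree, so $y=r\prod_{q\leq p}q$.

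With $y$ of this explicit shape, the constraints $y>Y_{p,2}$ and $\phi(y)\leq\phi(Y_{p,2})$ translate into
\begin{align*}
pr &> p_1p_2, & (p-1)(r-1) &\leq (p_1-1)(p_2-1),
\end{align*}
i.e., $A(p)<r\leq 1+\frac{(p_1-1)(p_2-1)}{p-1}$. The key algebraic step is the identity
\[
1+\frac{(p_1-1)(p_2-1)}{p-1}-\frac{p_1p_2}{p}=\frac{(p_1-p)(p_2-p)}{p(p-1)}=D(p),
\]
obtained by expanding over the common denominator $p(p-1)$ and collecting the terms into $(p-p_1)(p-p_2)$. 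Hence $r$ is a prime lying in $(A(p),A(p)+D(p)]$, contradicting the hypothesis $p\in E(2)$. Therefore $Y_{p,2}\in N_1$.

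The main obstacle is the algebraic identity above: the whole argument hinges on recognizing that the upper bound on $r$ coming from $\phi(y)\leq\phi(Y_{p,2})$ differs from the lower bound $A(p)$ by exactly $D(p)$, which is what makes the definition of $E(2)$ the right sufficient condition. Every other step is an application of the already-established lemmas on $W(y)$ and $D(W(y),W(Y_{p,k}))$.
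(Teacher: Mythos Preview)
Your proof is correct and follows essentially the same route as the paper's own argument: contradiction via Lemma \ref{Xnpmainlemma2}, then Lemmas \ref{Ypk_lemma4}, \ref{Ypk_lemma5}, \ref{Ypk_Lemma2}, and \ref{Ypk_Lemma3} to reduce to the case $y=r\prod_{q\leq p}q$, and finally the observation that the constraints force $r\in(A(p),A(p)+D(p)]$. Your explicit verification of the identity $1+\frac{(p_1-1)(p_2-1)}{p-1}-\frac{p_1p_2}{p}=D(p)$ is a welcome detail that the paper leaves implicit.
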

 
 \begin{proof}
 To show $Y_{p, 2}\in N_1,$ it is enough to show that $D(W(y), W(Y_{p, 2})) \geq 1$ for $y > Y_{p, 2}$ with 
  $\phi(y) \leq \phi(Y_{p, 2})$ by Lemma \ref{Xnpmainlemma2}. 
  
  Consider $y \in \mathbb{N}$ and $p \geq 11$ satisfying $y > Y_{p, 2}$ with $\phi(y) \leq \phi(Y_{p, 2}).$ 
  Then by Lemma \ref{Ypk_lemma4}, we have $\left|W(y)\right| \leq \left|W(Y_{p, 2})\right|.$
  
  Suppose that $[2, p] \cap \mathbb{P} \subset W(y).$ Since $\left|W(y)\right| \leq \left|W(Y_{p, 2})\right|,$
  it follows that $W(y) \cap (p, \infty)$ contains at most one element.
  If $W(y) \cap (p, \infty) = \varnothing,$ then Lemma \ref{Ypk_Lemma2} gives us $y \leq Y_{p, 2},$ a contradiction. On the
  other hand, if $W(y) \cap (p, \infty)=\{q_1\}$ for some prime $q_1,$ then we have $y=\left(\prod_{q \leq p}q\right)q_1$  
  by Lemma \ref{Ypk_Lemma3} and the fact that 
  $\phi(y) \leq \phi(Y_{p, 2}).$ Since $y > Y_{p, 2}$ and $\phi(y) \leq \phi(Y_{p, 2}),$ it follows that 
  $q_1 \in (A(p), A(p) + D(p)]\cap \mathbb{P}$ and hence
  $p\not\in E(2),$ which is a contradiction.

  Therefore, $[2, p] \cap \mathbb{P} \not\subset W(y).$ Then by Lemma \ref{Ypk_lemma5}, $D(W(y), W(Y_{p, 2})) \geq 1.$
\end{proof}
Now, we move on to discuss elements of the type $Y_{p, 3}$ in $N_1.$ Recall $E(3)$ from Definition \ref{family_name}.
\begin{proposition} \label{Yp3}
  Let $p\geq 11$ be a prime and $p\in E(3),$
   then $Y_{p, 3}\in N_1.$
\end{proposition}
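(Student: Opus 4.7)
The plan is to mirror the proof of Proposition~\ref{Yp2noprimes} very closely, upgrading the key squarefree subcase from ``one extra prime factor above $p$'' to ``two extra prime factors above $p$,'' so that the condition defining $E(3)$ appears naturally. So I would start by supposing, for contradiction, that there exists $y\in\mathbb{N}$ with $y>Y_{p,3}$ and $\phi(y)\leq\phi(Y_{p,3})$. By Lemma~\ref{Xnpmainlemma2} this gives $D(W(y),W(Y_{p,3}))<1$, and by Lemma~\ref{Ypk_lemma4} we already have $|W(y)|\leq|W(Y_{p,3})|$.

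The first split is on whether $[2,p]\cap\mathbb{P}\subset W(y)$ or not. If not, some prime $s\in[2,p]$ is missing from $W(y)$, and Lemma~\ref{Ypk_lemma5} directly yields $D(W(y),W(Y_{p,3}))\geq 1$, contradicting the above. So we may assume $[2,p]\cap\mathbb{P}\subset W(y)$, and then $|W(y)|\leq|W(Y_{p,3})|$ forces $|W(y)\cap(p,\infty)|\in\{0,1,2\}$. For the cases $|W(y)\cap(p,\infty)|\leq 1 = 3-2$, Lemma~\ref{Ypk_Lemma2} (applied with $k=3$) gives $y\leq Y_{p,3}$, contradicting $y>Y_{p,3}$. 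Hence we are reduced to the single situation $|W(y)\cap(p,\infty)|=2$.

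In that remaining subcase, write $W(y)\cap(p,\infty)=\{q_1,q_2\}$ with $p<q_1<q_2$. Lemma~\ref{Ypk_Lemma3} (with $k=3$) tells us that if $y$ were not squarefree then $\phi(y)>\phi(Y_{p,3})$, contradicting our hypothesis; so $y$ must be squarefree, and therefore $y=\bigl(\prod_{q\leq p}q\bigr)q_1 q_2$. Now I would just translate the two surviving inequalities $y>Y_{p,3}$ and $\phi(y)\leq\phi(Y_{p,3})$ directly. Cancelling the common factor $\prod_{q<p}q$ on both sides gives $p\,q_1 q_2>p_1 p_2 p_3$ and $(p-1)(q_1-1)(q_2-1)\leq(p_1-1)(p_2-1)(p_3-1)$, which rearrange to
\[
q_2>\frac{p_1 p_2 p_3}{p\,q_1}\qquad\text{and}\qquad q_2\leq 1+\frac{(p_1-1)(p_2-1)(p_3-1)}{(p-1)(q_1-1)}.
\]
Since $q_1\in\mathcal{O}_p$ and $q_2\in\mathcal{O}_{q_1}$, this places a prime $q_2$ inside the half-open interval appearing in the definition of $E(3)$ with $q=q_1$, contradicting $p\in E(3)$.

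I do not anticipate any genuine obstacle: the argument is structurally identical to Proposition~\ref{Yp2noprimes}, and the only non-routine step is verifying that the definition of $E(3)$ was packaged precisely to rule out the final squarefree configuration $y=\bigl(\prod_{q\leq p}q\bigr)q_1 q_2$. The only thing to double-check carefully is the bookkeeping $|W(Y_{p,3})|=|[2,p)\cap\mathbb{P}|+3$ so that ``two extra prime factors above $p$'' is indeed the maximum allowed by $|W(y)|\leq|W(Y_{p,3})|$ once $[2,p]\cap\mathbb{P}\subset W(y)$; the requirement $p\geq 11$ enters through the invocations of Lemmas~\ref{Ypk_Lemma3} and~\ref{Ypk_Lemma2}.
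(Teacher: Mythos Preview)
Your proposal is correct and follows essentially the same approach as the paper's own proof: contradiction via Lemma~\ref{Xnpmainlemma2}, bound $|W(y)|$ by Lemma~\ref{Ypk_lemma4}, dispose of the case where some prime $\leq p$ is missing via Lemma~\ref{Ypk_lemma5}, handle $|W(y)\cap(p,\infty)|\leq 1$ by Lemma~\ref{Ypk_Lemma2}, and in the remaining two-prime case use Lemma~\ref{Ypk_Lemma3} to force $y$ squarefree and then read off the contradiction with $p\in E(3)$. Your version is in fact slightly more explicit than the paper's in unpacking the final step into the two inequalities that violate the definition of $E(3)$.
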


\begin{proof}

 To show  $Y_{p, 3}\in N_1,$ it is enough to show that $D(W(y), W(Y_{p,3})) \geq 1$ for $y > Y_{p,3}$ with 
  $\phi(y) \leq \phi(Y_{p,3}),$  by Lemma \ref{Xnpmainlemma2}. 
  
  Consider $y \in \mathbb{N}$ and $p \geq 11$ satisfying $y > Y_{p,3}$ with $\phi(y) \leq \phi(Y_{p,3}).$ 
  Then by Lemma \ref{Ypk_lemma4}, we have $\left|W(y)\right| \leq \left|W(Y_{p,3})\right|.$
  
  Suppose that $[2, p] \cap \mathbb{P} \subset W(y).$ Since $\left|W(y)\right| \leq \left|W(Y_{p,3})\right|,$
  it follows that $W(y) \cap (p, \infty)$ contains at most two elements.
  If $|W(y) \cap (p, \infty)| \leq 1,$ then Lemma \ref{Ypk_Lemma2} and $\phi(y)\leq \phi(Y_{p, 3})$ imply $y \leq Y_{p,3},$ a contradiction. On the
  other hand, if $W(y) \cap (p, \infty)=\{q_1, q_2\}$ for some distinct primes $q_1,\ q_2,$ we have $y=\left(\prod_{q \leq p}q\right)q_1q_2$ by Lemma \ref{Ypk_Lemma3} and the fact that 
  $\phi(y) \leq \phi(Y_{p,3}).$ Since $y > Y_{p,3}$ and $\phi(y) \leq \phi(Y_{p,3}),$ we get $p \not \in E(3),$ a contradiction.
  
  Therefore, $[2, p] \cap \mathbb{P} \not\subset W(y).$ Then by Lemma \ref{Ypk_lemma5}, $D(W(y), W(Y_{p,3})) \geq 1.$
\end{proof}
Recall $\mathcal{X},\mathcal{Y}_k$ for $k\in [1,3]$ from Definition \ref{family_name}. 
One can easily check that $11\in E(2)\cap E(3).$ This shows that $\mathcal{Y}_2$ and $\mathcal{Y}_3$ are non-empty families.
Therefore combining Propositions \ref{p3}, \ref{t2Y1}, \ref{Yp2noprimes} and \ref{Yp3}, we get the proof of Theorem \ref{t2}.

\begin{remark}\label{remark1}
If $a=X_{n, p} \in \mathcal{X},$ then we can rewrite this as $a=b\prod_{q \leq p}q$ where $b=n\prod_{q \in W(n)}q^{-1}$ depends only on
the value of $X_{n, p}.$ Infact, $\mathcal{X}$ can be expressed as
$\mathcal{X}=\left\{b\prod_{q \leq p}q \in \mathbb{N} \colon \ b\prod_{q \in W(b)}q<2p\right\}.$
\end{remark}

\subsection{Proof of Theorem \ref{t3}, Theorem \ref{t4} and Corollary \ref{Infinite_Y_2}}

In this section, we will proceed to find explicit elements of $\mathcal{Y}_2$ and $\mathcal{Y}_3$ and then study the infiniteness of these sets. 
Due to previous discussions, we turn our attention to the set $E(2)$ and $E(3).$ If $p \in E(2),$ then
$(A(p), A(p)+ D(p)]\cap \mathbb{P}=\varnothing.$ We shall see below that, as a consequence of the prime number theorem (Lemma \ref{prime_number_theorem}),
the length of the interval $(A(p), A(p)+ D(p)]$ becomes smaller and smaller as $p$ increases, which reduces the chances of there 
being a prime in it. Thus, one may expect that $Y_{p, 2} \in N_1$ for infinitely many $p.$

\begin{lemma} \label{prime_number_theorem}
 For $\epsilon >0,$ there is an $n_0 \in \mathbb{N}$ such that $(n, n(1+\epsilon))\cap \mathbb{P}\neq\varnothing \ \forall \ n>n_0.$
\end{lemma}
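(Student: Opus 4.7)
The plan is to derive this as a direct consequence of the prime number theorem, which asserts that $\pi(x) = \frac{x}{\log x}(1 + o(1))$ as $x \to \infty$, where $\pi(x)$ denotes the number of primes not exceeding $x$. The goal is to show that $\pi(n(1+\epsilon)) - \pi(n) \to \infty$ as $n \to \infty$, since this count equals the number of primes in the half-open interval $(n, n(1+\epsilon)]$, and in particular forces that interval (and hence the open interval $(n, n(1+\epsilon))$, after a small adjustment if $n(1+\epsilon)$ happens to be a prime) to contain a prime for all sufficiently large $n$.

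First, I would write
\begin{align*}
\pi(n(1+\epsilon)) - \pi(n) &= \frac{n(1+\epsilon)}{\log(n(1+\epsilon))}(1+o(1)) - \frac{n}{\log n}(1+o(1)).
\end{align*}
Next, I would use $\log(n(1+\epsilon)) = \log n + \log(1+\epsilon)$, so that $\frac{1}{\log(n(1+\epsilon))} = \frac{1}{\log n}(1 + o(1))$ as $n \to \infty$. Substituting back gives
\begin{align*}
\pi(n(1+\epsilon)) - \pi(n) = \frac{n\epsilon}{\log n}(1+o(1)),
\end{align*}
which clearly tends to infinity as $n \to \infty$.

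From here, the conclusion is immediate: for each fixed $\epsilon > 0$, pick $n_0$ large enough so that $\pi(n(1+\epsilon)) - \pi(n) \geq 2$ whenever $n > n_0$. Then at least two primes lie in $(n, n(1+\epsilon)]$, so in particular at least one prime lies in the open interval $(n, n(1+\epsilon))$. This gives the desired $n_0$.

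There is really no substantive obstacle here, since the statement is a textbook corollary of the prime number theorem; the only minor care needed is to ensure that one obtains a prime strictly less than $n(1+\epsilon)$ rather than equal to it, which is handled by the fact that the prime count in the interval grows without bound. If one wished to avoid invoking the full strength of PNT, an alternative route would be to apply Proposition \ref{Jisturo.prime.interval} (Nagura's theorem) iteratively: since $(1.2)^k \to \infty$, for any $\epsilon > 0$ there is some $k$ with $(1.2)^{1/k} < 1+\epsilon$, and one could then chain together $k$ applications of Nagura's bound on successive subintervals — but the PNT route is cleaner and suffices for the qualitative statement required.
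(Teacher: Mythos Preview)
Your argument is correct and is the standard derivation of this fact from the prime number theorem. The paper itself does not supply a proof of this lemma at all; it is stated without proof as a well-known consequence of the prime number theorem (note the label \texttt{prime\_number\_theorem}), so there is nothing to compare against beyond observing that your approach is exactly the intended one.
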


\begin{lemma}\label{Dptend0}
$D(p) \rightarrow 0$ as $ p \rightarrow \infty.$ 
\end{lemma}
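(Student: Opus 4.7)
The plan is to show $(p_1 - p)(p_2 - p) = o(p^2)$ and then divide by $p(p-1)$. Lemma \ref{prime_number_theorem} is exactly the tool that gives this: it says that consecutive prime gaps near $p$ are of order $o(p)$.

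More precisely, given any $\epsilon > 0$, I would apply Lemma \ref{prime_number_theorem} to get some $n_0$ such that for $p > n_0$ the interval $(p, p(1+\epsilon))$ contains a prime, so $p_1 < p(1+\epsilon)$. Applying the lemma again at $n = p_1$ (which is $> n_0$ for $p$ large) yields a prime in $(p_1, p_1(1+\epsilon))$, hence $p_2 < p_1(1+\epsilon) < p(1+\epsilon)^2$. Subtracting $p$ gives $p_1 - p < \epsilon p$ and $p_2 - p < p\bigl((1+\epsilon)^2 - 1\bigr) = p(2\epsilon + \epsilon^2)$. Multiplying these bounds,
\[
(p_1 - p)(p_2 - p) < \epsilon(2\epsilon + \epsilon^2)\, p^2,
\]
and therefore
\[
D(p) = \frac{(p_1-p)(p_2-p)}{p(p-1)} < \frac{\epsilon(2\epsilon+\epsilon^2)\,p}{p-1}.
\]
Letting $p \to \infty$, the right side tends to $\epsilon(2\epsilon + \epsilon^2)$, which can be made arbitrarily small by the choice of $\epsilon$, so $D(p) \to 0$.

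There is no real obstacle here, since Lemma \ref{prime_number_theorem} does all the heavy lifting; the only care needed is to apply it \emph{twice} (once at $p$, once at $p_1$) so as to get the required bound on $p_2 - p$ as well. A slightly cleaner variant would be to apply it once with $\epsilon$ replaced by some $\epsilon'$ chosen so that $(1+\epsilon')^2 - 1 < \epsilon$, giving $p_2 - p < \epsilon p$ directly; either phrasing works.
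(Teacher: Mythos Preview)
Your proof is correct and follows essentially the same route as the paper: apply Lemma \ref{prime_number_theorem} to get $p_1 < (1+\epsilon)p$ and $p_2 < (1+\epsilon)^2 p$, bound $(p_1-p)(p_2-p)$ by $\epsilon(2\epsilon+\epsilon^2)p^2$, and conclude. The only cosmetic difference is that the paper bounds $\tfrac{p}{p-1}\le 2$ to write $D(p)<2\epsilon(\epsilon^2+2\epsilon)<6\epsilon^2$ directly, whereas you pass to the limit in $p$; the substance is identical.
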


\begin{proof}
Let $\epsilon > 0.$ Lemma \ref{prime_number_theorem} gives $p_0(\epsilon)\in\mathbb{N}$ such that 
$p_1 < (1 + \epsilon)p$ and $p_2 < (1 + \epsilon)^{2}p$ for all primes $p > p_0(\epsilon).$
Hence, $D(p) < 2\epsilon(\epsilon^{2} + 2\epsilon) < 6\epsilon^{2}$ and the result follows.
\end{proof}
We now give another type of sufficient conditions for $Y_{p, 2}$ to be in $N_1.$ These conditions are in terms of the distance
between successive primes.
\begin{proof}[Proof of Theorem \ref{t3}\ref{t3part1}]
 To show that $Y_{p, 2} \in N_1,$ by Proposition \ref{Yp2noprimes}, it is enough to show that 
 $\left(\frac{p_1p_2}{p}, \frac{p_1p_2}{p}+D(p)\right]\cap\mathbb{P}=\varnothing,$ where $p_i$ is the $i\text{th}$ smallest prime greater than $p.$
 If possible, let there be a prime $q$ in  $\left(\frac{p_1p_2}{p}, \frac{p_1p_2}{p}+D(p)\right].$ Then, 
 $pq = p_{1}p_2 + l \ \text{ where } l \in \left[2, \frac{(p_1-p)(p_2-p)}{p-1}\right] \cap 2\mathbb{N}.$

 We now consider several cases according to the value of $p_1 - p.$ 
 Define $d_p := \displaystyle\frac{p_1-p}{2}$ and $\Delta(d_p) := \displaystyle\frac{(p_1-p)(p_2-p)}{p-1}.$ For $p > 25,$ we get the following
 upper bounds for $\Delta(d_p)$ for various values of $d_p,$ by using Proposition \ref{Jisturo.prime.interval}.
 \begin{center}\begin{tabular}{|c|c|c|c|c|c|c|}
  \hline
  $d_p$&1&2&3&4&5&6\\
  \hline
  $\Delta(d_p) \leq$&0.94&1.88&2.82&3.76&4.70&5.64\\
  \hline
 \end{tabular}
 \end{center}
Let $d_p\in\{1, 2\}.$ If $p>25$ or  $(p,d_p)=(11,1),$ $(17,1),$ then $\Delta(d_p)<2.$
It follows that there does not exist any even $l \in [2, \Delta(d_p)]$ such that $pq = p_{1}p_2 + l,$ a contradiction.
On the other hand, if $(p,d_p)=(13,2)$, $(19,2)$, then $\Delta(d_p)<3$ and 
thus $pq = p_{1}p_2 + 2.$ We can see by simple calculation that $q$ isn't a prime here, again giving a contradiction.

Now we come to the case when $d_p=3,4.$ For $p > 25,$ we again see that $l=2$ is the only possible value in $[2, \Delta(d_p)].$
So, $pq= p_{1}p_2 + 2.$ Inserting the value $p_1=p+2d_p$ in this, we get $q= p_2 + 2\frac{d_pp_2 + 1}{p}.$ This implies that
$k_{d_p}:=\frac{d_pp_2 + 1}{p}$ is an integer. Since $p<p_2,$ we observe that $k_{d_p}>d_p.$ Applying Proposition \ref{Jisturo.prime.interval}, we get
$p_2 < 1.2p_1= 1.2(p + 2d_p).$ Hence, \begin{equation}
                                     k_{d_p}=\frac{d_pp_2 + 1}{p} < 1.2d_p + \frac{2.4d_p^2 + 1}{p}. \label{Deltad3d4}
                                    \end{equation}
Inequality \eqref{Deltad3d4} gives us $k_3 < 4 \text{ for } p > 53$ and $k_4 < 5 \text{ for } p > 197,$ which results in a
contradiction in each case since $k_{d_p}$ is an integer greater than $d_p.$ So the only remaining cases are $p=23, 31, 47, 53$ for
$d_p=3$ and $p=89$ for $d_p=4.$ For $(p, d_p)= (47, 3), (53, 3)$ and $(89, 4), \Delta(d_p) < 2,$ which gives a contradiction to the existence of $q.$
For $(p, d_p)= (23, 3), (31, 3),$ we get $\Delta(d_p) < 3$ and so $pq = p_{1}p_2 + 2.$ We can see by simple calculation that $q$
isn't a prime here.

Therefore, in each of the above cases, we get a contradiction to the existence of a prime in $(\frac{p_1p_2}{p}, \frac{p_1p_2}{p}+D(p)]$ and hence $Y_{p, 2} \in N_1.$ 
\end{proof}
Generalizing the observations in Theorem \ref{t3}\ref{t3part1} to three successive primes, we give certain qualitative results for primes
$p$ such that $Y_{p, 2}$ is in $N_1$ by means of Theorem \ref{t3}\ref{t3part3}.
\begin{proof}[Proof of Theorem \ref{t3}\ref{t3part3}]
For given  $a, b\in 2\mathbb{N},$  let $p\geq2ab$ and $p\in \mathbb{P}.$ Suppose that $p, p+a$ and $ p+b$ are consecutive primes. Clearly $p\geq 11$
  and $\frac{ab}{p}\leq \frac{1}{2}.$ It follows that
 $p+a+b<\frac{(p+a)(p+b)}{p}\leq p+a+b+\frac{1}{2} \text{ and } D(p)\leq\frac{1}{2(p-1)}.$
Therefore
$\left(\frac{(p+a)(p+b)}{p}, \frac{(p+a)(p+b)}{p} + D(p)\right]\cap \mathbb{N}=\varnothing$, since $p\geq 11.$
So $p \in E(2)$ and hence, by Proposition \ref{Yp2noprimes}, $Y_{p, 2} \in N_1.$
\end{proof}

The Theorem \ref{t3}\ref{t3part2} gives concrete examples of elements of the type $Y_{p, 3}$ in $N_1.$
\begin{proof}[Proof of Theorem \ref{t3}\ref{t3part2}]
Let $p_i$ be the $i\text{th}$ smallest prime greater than $p.$ To prove that $Y_{p,3}\in N_1,$ it is enough to show that $p\in E(3)$
by Proposition \ref{Yp3}.
 
 Suppose $p\not \in E(3),$ then there exist primes $q_1, q_2\in \mathcal{O}_p$ with $q_1<q_2$ such that
$q_2\in \left(\frac{p_1p_2p_3}{pq_1}, 1+ \frac{(p_1-1)(p_2-1)(p_3-1)}{(p-1)(q_1-1)}\right].$ 
Consider the cases on the values of $q_1.$
 \\~
 $\underline{\textbf{Case I:}}$ ($q_1\geq p_3$)\\
  Since $q_2>q_1$ and $q_1\geq p_3,$ we have $q_2\geq p_3+2.$ It follows that
  $ p_3+1\leq \frac{(p_1-1)(p_2-1)}{p-1}.$
  Setting $p_1=p+2,$ $p_2=p+6$ and $p_3\geq p_2+2$ in the above inequality, we get $(p+9)(p-1)\leq(p+1)(p+5),$ i.e., $p\leq 7,$ 
  which is a  contradiction.
\\~
 $\underline{\textbf{Case II:}}$ ($q_1=p_2$)\\
In this case,  
$q_2\in \left(\frac{p_1p_3}{p}, \frac{p_1p_3}{p}+\frac{(p_1-p)(p_3-p)}{p(p-1)}\right].$ 
By Lemma \ref{Naguralemma}, we have  $p_3<\frac{9p}{5}$ for $p\geq 11.$ Using this along with $p_1=p+2,$ we get
$q_2p\in\left(p_1p_3, p_1p_3+\frac{8p}{5(p-1)}\right).$
Since $p\geq 11,$ we have $\frac{8p}{5(p-1)}<2.$ Inserting this in the above expression, we have $q_2p=p_1p_3+1$ which is not 
possible because $q_2p$ is odd and $p_1p_3+1$ is even.
\\~
 $\underline{\textbf{Case III:}}$ ($q_1=p_1$)\\
 In this case, 
$q_2\in \left(\frac{p_2p_3}{p}, \frac{p_2p_3}{p}+\frac{(p_2-p)(p_3-p)}{p(p-1)}\right].$  
By Lemma \ref{Naguralemma}, we have  $p_3<\frac{9p}{5}$ for $p\geq 11.$ Along with $p_2=p+6,$ this gives
$q_2p\in\left(p_2p_3, p_2p_3+\frac{24p}{5(p-1)}\right).$
Since $\frac{24p}{5(p-1)}< 6$ for $p\geq 11,$ $q_2p$ can be either $p_2p_3+2$ or $p_2p_3+4.$ Since $p_2=p+6,$ we have $q_2=p_3+2d,$ where
$d$ is an integer equal to either $\frac{3p_3+1}{p}$ or $\frac{3p_3+2}{p}.$ Since $p<p_3<\frac{9p}{5}$ by Lemma \ref{Naguralemma}, we get $3<d<6.$ It follows that 
either $\frac{3p_3+1}{p}=4$ or $\frac{3p_3+2}{p}=5$ as $3p_3+1$ is an even integer. In both cases, $p\equiv 1\pmod{3}$ and hence 
$p+2\equiv 0\pmod{3}.$ This contradicts the fact that $p+2$ is a prime.

Therefore, in each case, we get a contradiction to the existence of the prime $q_1.$ Hence $p \in E(3).$
 \end{proof}

Now we shall see a criteria for the infiniteness of the set $\mathcal{Y}_2$ in 
Theorem \ref{t4} by taking a condition on the fractional part of $\frac{p_1p_2}{p}.$

\begin{proof}[Proof of Theorem \ref{t4}]
By Lemma \ref{Dptend0}, $D(p) \rightarrow 0$ as
$p \rightarrow \infty.$ So there exists a prime $p_0(\delta_0)$ such that $D(p) < 1 - \delta_0 \ \forall \ p > p_0(\delta_0).$
Therefore, for $p \in A \cap \left[p_0(\delta_0), \infty\right),$ we have $$\frac{p_{1}p_2}{p} + D(p) < \left\lfloor\frac{p_{1}p_2}{p}\right\rfloor + \delta_0 + 1 - \delta_0 =\left\lfloor\frac{p_{1}p_2}{p}\right\rfloor + 1.$$
Hence, $\left(\frac{p_{1}p_2}{p}, \frac{p_{1}p_2}{p} + D(p)\right]$ doesn't contain any integers for 
$p \in A \cap [p_0(\delta_0), \infty).$ Therefore $p \in E(2)$ for each $p$ in $A \cap [p_0(\delta_0), \infty).$ Hence,
by Proposition \ref{Yp2noprimes}, $Y_{p, 2} \in N_1$ for $p \in A \cap [p_0'(\delta_0), \infty),$ where $p_0'(\delta_0)=\max\{11,p_0(\delta_0)\}.$
\end{proof}

We are now going to show that $\mathcal{Y}_2$ is an infinite set 
using the result on Bounded gaps between consecutive primes by Yitang Zhang (see Proposition \ref{70000000}). 
 \begin{proof}[Proof of Corollary \ref{Infinite_Y_2}]
 Let $\epsilon <1$ be a positive real number and $C=7\times 10^7.$ Let $p,p_1 $ and $p_2$ be consecutive primes for each $p\in\mathbb{P}.$
 By prime number theorem, there exists $p_0(\epsilon)\in \mathbb{N}$ such that $p_2-p<\frac{\epsilon p}{C}$ $\forall$ $p\geq p_0(\epsilon).$ Consider the set
 $$R=\{p\in \mathbb{P}\colon p_1-p<C \text{ and } p\geq p_0(\epsilon)\}.$$ By Proposition \ref{70000000}, $R$ is an infinite set. 
 
 Let $p\in R.$ Then $p_1=p+N$ for some $N<C.$ It gives us  $\frac{p_1p_2}{p}=p_2+N+\frac{N(p_2-p)}{p}$ where
$ \frac{N(p_2-p)}{p}<\frac{N\epsilon}{C}<\epsilon$ as $p_2-p<\frac{\epsilon p}{C}$ for $p\geq p_0(\epsilon)$ and $N<C.$ Hence  the fractional part of $\frac{p_1p_2}{p}$ is less than $\epsilon$ for each $p\in R.$ Then
Theorem \ref{t4} ensures that $Y_{p,2}$  is a sparsely totient number for all sufficiently large primes $p\in R.$ Since $R$ is an infinite set, $\mathcal{Y}_2$ is an infinite set.
 \end{proof}


\section{\textbf{Additive and multiplicative patterns in sparsely totient numbers}}\label{section_ampistn}
We now turn our attention to additive and multiplicative configurations like sets of finite sums, sets of finite products
and arithmetic and geometric progressions inside $N_1.$ Below, we give the relevant definitions.

Let $G$ be a non-empty set and let $P_f(G)$  be the collection of all non-empty finite subsets of $G.$ 
\begin{definition}\label{piecewise_defn}
 Let $(G,+)$ be a commutative semigroup and $A\subset G.$ Then 
 \begin{enumerate}[label=(\alph*)]
  \item $A$ is syndetic iff there exists $S\in P_f(G)$ such that $G=\displaystyle\cup_{t\in S}(-t+A).$
  \item $A$ is thick iff whenever $F\in P_f(G),$ there exists $x\in G$  such that $F+x\subset A.$
  \item $A$ is piecewise syndetic iff $\exists$ $S\in P_f(G)$ such that $\displaystyle\cup_{t\in S}(-t+A) \text{ is thick}.$
\end{enumerate}
   \end{definition}
   
If $G=\mathbb{N}$ with binary operation addition $+(\text{or } \cdot),$ then syndetic, thick and piecewise syndetic sets are called, respectively,
  additively (or multiplicatively) syndetic, additively (or multiplicatively) thick and additively (or multiplicatively) piecewise syndetic.
 
\begin{proposition}\label{piecewise_syndetic_N_1}
 $N_1$ is  multiplicatively piecewise syndetic but  not additively piecewise syndetic.
\end{proposition}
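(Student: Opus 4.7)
The plan is to prove the two assertions by separate arguments. For the multiplicative half, I would show the stronger statement that $N_1$ is multiplicatively thick, which immediately yields multiplicative piecewise syndeticity by taking $S=\{1\}$ in Definition \ref{piecewise_defn}. Given any finite $F=\{n_1,\dots,n_k\}\subset\mathbb{N}$, set $m_i:=n_i\prod_{q\in W(n_i)}q$ and choose a prime $P$ with $P>\max_i m_i/2$. Since $W(m_i)=W(n_i)$, one has $m_i/\prod_{q\in W(m_i)}q=n_i$, so Definition \ref{family_name} gives $X_{m_i,P}=n_i\prod_{q\leq P,\,q\in\mathbb{P}}q$. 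Setting $x:=\prod_{q\leq P,\,q\in\mathbb{P}}q$, Proposition \ref{p3} then yields $xn_i=X_{m_i,P}\in\mathcal{X}\subset N_1$ for every $i$, so $xF\subset N_1$, establishing multiplicative thickness.

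For the additive half, I would argue by contradiction. Suppose $N_1$ were additively piecewise syndetic; then some finite $S\subset\mathbb{N}$ makes
$$T:=\bigcup_{s\in S}(-s+N_1)=\{n\in\mathbb{N}:n+s\in N_1\text{ for some }s\in S\}$$
additively thick, so $T$ contains arbitrarily long intervals. Set $\Delta:=\max(S)-\min(S)$, and pick a prime $p$ large enough that the primorial $Z_p:=\prod_{q\leq p,\,q\in\mathbb{P}}q$ satisfies $Z_p\geq\Delta+2$. By Theorem \ref{t1} applied to the squarefree integer $Z_p$, there exists $N^*\in\mathbb{N}$ such that $Z_p$ divides every element of $N_1$ exceeding $N^*$; consequently any two consecutive elements of $N_1\cap(N^*,\infty)$ differ by at least $Z_p$.

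Each $m\in N_1$ contributes to $T$ only at the integers $m-S\subset[m-\max(S),\,m-\min(S)]$, a block occupying $\Delta+1$ consecutive positions. For any $n>N^*$ with $n\in T$, the witnessing element $n+s$ automatically exceeds $N^*$, so all blocks relevant above $N^*$ come from $m\in N_1\cap(N^*,\infty)$. For consecutive such $m_1<m_2$, the gap between their blocks has length $m_2-m_1-\Delta-1\geq Z_p-\Delta-1\geq 1$, so the blocks are separated by at least one integer missing from $T$. Hence every interval contained in $T\cap(N^*,\infty)$ has length at most $\Delta+1$, and combining this with the trivial bound of $N^*$ on any interval meeting $[1,N^*]$, one obtains a uniform bound on interval lengths inside $T$, contradicting thickness.

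The main technical point to execute carefully is the multiplicative construction---verifying the identification $xn_i=X_{m_i,P}$ via the observation $W(m_i)=W(n_i)$ and checking that the hypothesis $p>n/2$ of Proposition \ref{p3} can be met simultaneously for all $n_i\in F$ by taking one sufficiently large prime $P$. The additive failure, by contrast, is essentially immediate from Theorem \ref{t1}, since divisibility of every large sparsely totient number by the arbitrarily large primorial $Z_p$ forces unavoidable gaps that shifts by a fixed finite set $S$ cannot plug.
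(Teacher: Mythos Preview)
Your proof is correct and follows essentially the same strategy as the paper: for the additive failure you use Theorem~\ref{t1} to force large gaps in $N_1$ that a fixed finite shift set cannot cover, and for the multiplicative half you translate an arbitrary finite $F$ into $\mathcal{X}$ via the $X_{n,p}$ construction with a large prime.

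One point worth noting: your multiplicative argument is actually cleaner and slightly stronger than the paper's. The paper proves that $2^{-1}\mathcal{X}$ is multiplicatively thick (using $\lambda_u=2u\prod_{q\in W(2u)}q$ and hence landing in $2^{-1}\mathcal{X}$), which gives piecewise syndeticity with $S=\{2\}$. You dispense with the extra factor of $2$ and show directly that $\mathcal{X}$ itself is multiplicatively thick, since $m_i=n_i\prod_{q\in W(n_i)}q$ already satisfies $W(m_i)=W(n_i)$ and hence $X_{m_i,P}=n_i\prod_{q\le P}q$. This is a genuine (if minor) improvement. Conversely, on the additive side the paper is marginally tidier: it uses a single prime $p>\max(S)$ rather than the primorial $Z_p$, which suffices because divisibility by $p$ alone already forces consecutive elements of $N_1$ above the threshold to differ by at least $p$.
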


\begin{proof}
 Suppose $S\in P_f(\mathbb{N}).$  
Let $u=\max(S).$ Suppose $p$ is a prime with $p>u.$ Then by Theorem \ref{t1}, \ $\exists \ n_0(p) \in N_1$ such that 
\ $\forall \ n_1>n_2>n_0, \ n_1, n_2 \in N_1,$ we have $p \mid (n_1-n_2).$ In particular, $n_1-n_2 \geq p.$
Observe that the set $\left(\cup_{t=1}^u(-t + N_1)\right)\cap [n_0(p), \infty)$ does not contain any interval of length greater than $p.$
This means that $\left(\cup_{t=1}^u(-t + N_1)\right)$ is not additively thick  and therefore its subset $\left(\cup_{t \in S}(-t + N_1)\right)$
is also not additively thick. Therefore, $N_1$ is not additively piecewise syndetic.

We will prove that $N_1$ is multiplicatively piecewise syndetic  by showing that its subset $\mathcal{X}$ itself has this property.
It is enough to show that $2^{-1}\mathcal{X}=\{n\in \mathbb{N}\colon 2n\in \mathcal{X}\}$ is multiplicatively thick. 
Let $F$ be a finite set. Define $r=\max\{\lambda_u \colon u \in F\},$ where $\lambda_u=2u\prod_{q\in W(2u)}q.$

Let $p\in\mathbb{P}$ and $p>r.$
Then,  $X_{\lambda_u, p}=2u\prod_{q\leq p, q\in \mathbb{P}}q \ \forall \ u \in F.$ Since $\lambda_u\leq r<p,$ we have
$X_{\lambda_u, p} \in \mathcal{X}.$ 
Hence,  
$F\cdot\displaystyle\prod_{q\leq p, q\in \mathbb{P}}q \subset 2^{-1}\mathcal{X}$ and so $2^{-1}\mathcal{X}$ is a thick set. 
 \end{proof}

\begin{definition}[Finite sums and Finite products]\label{FS_FP}
  Let $J$ be a subset of   $\mathbb{N}$ and suppose $x=(x_n)_{n \in J}$ is a sequence
  indexed by this subset. Then we define the set of Finite sums $FS(x)$ and the set of Finite products $FP(x)$ as follows:
  $$FS(x)=\left\{\displaystyle \sum_{n\in F}x_n\colon F\in P_f(J)\right\} ~\text{ and }~
  FP(x)=\left\{\displaystyle \prod_{n\in F}x_n\colon F\in P_f(J)\right\}.$$
  \end{definition}
  \begin{definition}\label{IP_A_M}
 Let $A\subset \mathbb{N}$ and $r\in\mathbb{N}.$ Then
 \begin{enumerate}[label=(\alph*)]
  \item 
$A$ is called an additive $IP_r$ set (respectively, multiplicative $IP_r$ set) if there is
 a sequence $x=(x_n)_{n\in J}$ with $|J|=r$ such that $FS(x)\subset A$ $(FP(x) \subset A).$ 
 \item $A$ is called an additive $IP_0$ set 
 (respectively, multiplicative $IP_0$ set) if $A$ is an additive $IP_r$ set (multiplicative $IP_r$ set) for each $r\in \mathbb{N}.$
 \item $A$ is called an additive $IP$ set (respectively, multiplicative $IP$ set) if there exists an infinite sequence 
 $x=(x_n)_{n\in \mathbb{N}}$  such that $FS(x)\subset A$ $(FP(x) \subset A).$ 
 \end{enumerate}
\end{definition}

First, we study the existence of an additive $IP_r$ set in $N_1.$ For example, if $r=2,$ by an additive $IP_2$ set, we mean a set which contains a 
triplet of the form $\{x, y, x+y\}.$ In the following proposition, we show that $\mathcal{X}$ contains the
set of finite sums of arbitrarily long finite sequences.  

\begin{proposition}\label{XIp0}
$\mathcal{X}$ is an additive $IP_{0}$ set.
\end{proposition}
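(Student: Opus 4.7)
The plan is to fix $r \in \mathbb{N}$ and exhibit a sequence $(x_i)_{i=1}^{r}$ of positive integers whose set of finite sums is contained in $\mathcal{X}$; since $r$ is arbitrary, this yields the additive $IP_0$ property. The main tool will be the reformulation of $\mathcal{X}$ from Remark \ref{remark1}, namely
\[
\mathcal{X} \;=\; \left\{ b \prod_{q \leq p,\, q \in \mathbb{P}} q \in \mathbb{N} \colon b \prod_{q \in W(b)} q < 2p \right\},
\]
which reduces membership in $\mathcal{X}$ to a single size inequality comparing the cofactor $b$ (together with its radical $\prod_{q\in W(b)}q$) to the prime $p$ indexing the primorial.

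The construction is to set $P_p := \prod_{q \leq p,\, q \in \mathbb{P}} q$ and $x_i := i \cdot P_p$ for $i = 1, \ldots, r$, where the prime $p$ will be chosen at the end. The $x_i$ are distinct positive multiples of $P_p$, and for any nonempty $F \subset [1, r]$ one computes
\[
\sum_{i \in F} x_i \;=\; s_F \cdot P_p, \qquad s_F := \sum_{i \in F} i \;\in\; \left[1,\, \tfrac{r(r+1)}{2}\right].
\]
Thus every element of $FS\bigl((x_i)_{i=1}^{r}\bigr)$ has precisely the form $b \cdot P_p$ required by Remark \ref{remark1}, with $b=s_F$ lying in a finite, $r$-dependent range.

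To conclude, set $M_r := \max\bigl\{ b \prod_{q \in W(b)} q \colon 1 \leq b \leq r(r+1)/2 \bigr\}$, a finite constant depending only on $r$. By the infinitude of primes, pick any prime $p$ with $p > M_r/2$. Then for every nonempty $F \subset [1, r]$,
\[
s_F \prod_{q \in W(s_F)} q \;\leq\; M_r \;<\; 2p,
\]
so $s_F \cdot P_p \in \mathcal{X}$ by Remark \ref{remark1}, which gives $FS\bigl((x_i)_{i=1}^{r}\bigr) \subset \mathcal{X}$, as required. The only (mild) concern is that the cofactor-size inequality must hold uniformly over all $2^r - 1$ nonempty subsets $F$; this is handled in one stroke by defining $M_r$ as the maximum over the finite range of $s_F$ and then taking $p$ larger than $M_r/2$, so no substantive obstacle arises.
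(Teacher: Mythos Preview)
Your proof is correct and follows essentially the same approach as the paper's own argument. Both fix $r$, take a finite sequence of multiples of the primorial $P_p$, observe that every finite sum is again a multiple $bP_p$ with $b$ ranging over a bounded set, and then choose $p$ large enough that the cofactor condition $b\prod_{q\in W(b)}q<2p$ holds for all such $b$; the only cosmetic differences are that the paper starts from an arbitrary family $\{n_i\}$ (yielding cofactors $m_i=n_i/\mathrm{rad}(n_i)$) and bounds via $u_I\,\mathrm{rad}(u_I)\le u_I^2$, whereas you take the concrete choice $m_i=i$ and use the exact maximum $M_r$.
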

\begin{proof}
It is enough to show that $\mathcal{X}$ is an additive $IP_r$ set for each $r\in \mathbb{N}.$ Let $r\in\mathbb{N}$  and 
$\{n_i\colon i\in [1, r]\}\subset \mathbb{N}.$ 
Define
$m_i:={n_i}{\prod_{q\in W(n_i)}q^{-1}}$  and  $u_{I}=\sum_{i\in I}m_i$ $\forall$ $i\in[1,r]$ and $I\subset [1,r].$

Let $p$ be a prime such that $p> \left(\sum_{i=1}^{r}m_{i}\right)^2$ and 
consider the finite sequence $E=\{X_{n_i,p}\}_{i=1}^{r}.$ We will show that $\mathcal{X}$ is an additive $IP_r$ set by showing
$FS(E)\subset \mathcal{X}.$
 
 Let $P$ be the product of primes less than or equal to $p.$ Then $X_{n_i,p}= m_iP.$ It gives $\sum_{i\in I}X_{n_i,p}=\left( \sum_{i\in I}m_i\right)P=u_{I}P$ for  $I\subset [1,r].$
So, $\sum_{i\in I}X_{n_i,p}=X_{b_I,p},$ where $b_I=u_{I}\prod_{q \in W(u_I)}q \leq u_{I}^{2}<p, \text{ as } I \subset [1, r].$
Thus, $X_{b_I,p}\in \mathcal{X}.$ Hence, $FS(E)\subset \mathcal{X}.$
\end{proof}

However, as the next proposition shows, we see that $N_1$ does not contain the set of finite sums of any infinite sequence via an application
of Theorem \ref{t1}.

\begin{proposition}\label{additive_IP}
 $N_1$ is not an additive IP set.
\end{proposition}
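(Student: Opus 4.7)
The plan is to derive a contradiction from Theorem \ref{t1} by constructing two sequences inside $FS(x)$ whose difference is a fixed positive integer. Suppose, for contradiction, that there exists an infinite sequence $x = (x_n)_{n \in \mathbb{N}}$ of positive integers with $FS(x) \subset N_1$. The key observation is that by choosing subsets of the index set that differ only by a single fixed index, I get two subsequences of $N_1$ tending to infinity whose difference is constant.

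Concretely, I would set
\begin{align*}
s_k &:= \sum_{i=1}^{k} x_i \quad (k \geq 1), \\
t_k &:= \sum_{i=2}^{k} x_i \quad (k \geq 2),
\end{align*}
so that $s_k - t_k = x_1$ for every $k \geq 2$. Both $\{s_k\}$ and $\{t_k\}$ lie in $FS(x) \subset N_1$, and since $x_i \geq 1$ for all $i$, both tend to infinity. Now fix an arbitrary prime $p$. By Theorem \ref{t1} applied to the squarefree integer $n = p$, there exists $M_p$ such that every element of $N_1$ exceeding $M_p$ is divisible by $p$. Pick $k$ large enough that $t_k > M_p$ (hence $s_k > M_p$ too). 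Then $p \mid s_k$ and $p \mid t_k$, so $p \mid s_k - t_k = x_1$.

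Since $p$ was an arbitrary prime, this would force every prime to divide the fixed positive integer $x_1$, which is absurd. This contradiction completes the proof. There is essentially no hard step here once the construction is in place; the only subtlety is recognizing that one should not work with individual $x_n$'s (whose size is not controlled and which may even repeat) but rather with two families of partial sums whose constant difference $x_1$ becomes the target of the divisibility conclusion. The infinitude of the index set is used precisely to guarantee that $t_k \to \infty$, so that Theorem \ref{t1} can be invoked uniformly over all primes $p$ simultaneously.
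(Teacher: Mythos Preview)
Your proof is correct and is essentially identical to the paper's argument: both construct the partial sums $s_k=\sum_{i=1}^k x_i$ and $t_k=\sum_{i=2}^k x_i$ in $FS(x)\subset N_1$, use Theorem~\ref{t1} to force $p\mid s_k-t_k=x_1$ for large $k$, and derive a contradiction. The only cosmetic difference is that the paper fixes a single prime $p>x_1$ to obtain the contradiction, whereas you quantify over all primes; the substance is the same.
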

\begin{proof}
 If possible, suppose $N_1$ is an additive $IP$ set. Then there exists an increasing infinite sequence $x=(x_n)_{n\in \mathbb{N}}$ in $ N_1$ such that
 $ FS(x)\subset N_1 .$ 
 Set $y_n= \sum_{i=1}^{n}x_i $ and $z_n= \sum_{i=2}^{n}x_i$ for $n\geq 2.$ Since $FS(x)\subset N_1,$ it follows that 
 $y_n,z_n\in N_1$ and $y_n-z_n=x_1$ $\forall$ $n\geq 2.$
 
 Suppose $p$ is a prime greater than $x_1.$ Then by Theorem \ref{t1}, there exists $u_0\in \mathbb{N}$ such that $u\geq u_0$  and 
 $u\in N_1\Rightarrow u\equiv 0 \pmod{p}.$ Since $(y_n) $ and $(z_n)$ are strictly increasing sequences,
 there exists $n_0\in \mathbb{N}$ such that $y_n\geq u_0$ and $z_n\geq u_0$ for all $n\geq n_0.$
 Since $y_n, z_n \in N_1$ for all $n \geq \max\{n_0, 2\},$ it follows that $x_1=y_n-z_n \equiv 0\pmod{p},$ a contradiction to
 $0<x_1<p.$ Therefore $N_1$ is not an additive $IP$ set.
\end{proof}
\noindent
But, $N_1$ does contain the set of finite products of an infinite sequence as shown below. 
\begin{proposition}\label{multiplicative_IP}
$\mathcal{X}$ is a multiplicative $IP$ set.
\end{proposition}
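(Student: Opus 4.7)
The plan is to build, by induction, a strictly increasing sequence of primes $p_1 < p_2 < \cdots$ growing sufficiently fast so that $x_n := \prod_{q\le p_n,\,q\in\mathbb{P}} q$ satisfies $FP((x_n))\subseteq \mathcal{X}$. Writing $P_m := \prod_{q\le p_m,\,q\in\mathbb{P}} q$, each $x_m = P_m$ is already in $\mathcal{X}$, since $x_m = X_{1,p_m}$ and $p_m > 1/2$; the substantive issue is therefore to control the multi-element products.

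The basic computation is an explicit factorisation. Fix a non-empty $F = \{i_1 < i_2 < \cdots < i_k\}\subset \mathbb{N}$. Then $\prod_{i\in F} x_i = \prod_{j=1}^{k} P_{i_j}$, and for $k \ge 2$ this product equals $X_{n_F,\,p_{i_k}}$ with the choice $n_F := P_{i_{k-1}} \prod_{j=1}^{k-1} P_{i_j}$. Indeed, the set of primes dividing $n_F$ is exactly the set of primes $\le p_{i_{k-1}}$, so $\prod_{q\mid n_F,\,q\in\mathbb{P}} q = P_{i_{k-1}}$ and hence $n_F \cdot \prod_{q\mid n_F,\,q\in\mathbb{P}} q^{-1} = \prod_{j=1}^{k-1} P_{i_j}$; multiplying by $P_{i_k}$ recovers $\prod_{i\in F} x_i$. (For $k = 1$ the identity $x_{i_1} = X_{1,p_{i_1}}$ is trivial.) By Definition \ref{family_name}, it therefore suffices to ensure $p_{i_k} > n_F/2$.

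The induction is now routine. Suppose $p_1, \dots, p_{n-1}$ have been chosen so that $\prod_{i\in F} x_i \in \mathcal{X}$ for every non-empty $F \subseteq [1,n-1]$. For any $F \subseteq [1,n]$ with $n \in F$, the number $n_F$ depends only on $p_{i_1}, \dots, p_{i_{k-1}}$, all of which lie among $p_1,\dots,p_{n-1}$; there are finitely many such $F$, giving finitely many lower bounds of the form $p_n > n_F/2$. By the infinitude of primes one picks a prime $p_n$ exceeding all of them. (In particular $p_n > p_{n-1}$, since $F = \{n-1,n\}$ already forces $p_n > P_{n-1}^2/2$.) This extends the inductive hypothesis to $[1,n]$, and iterating produces the desired infinite sequence $(x_n)$ with $FP((x_n)) \subseteq \mathcal{X}$, proving that $\mathcal{X}$ is a multiplicative $IP$ set.

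There is no serious obstacle here: the only real content is the one-line algebraic verification that $\prod_{j=1}^{k} P_{i_j} = X_{n_F, p_{i_k}}$, after which the inductive construction goes through because each stage imposes only finitely many constraints on the next prime. The sequence one obtains grows essentially tower-like (one can take $p_n$ to be any prime above $P_{n-1}^n$), which is perfectly consistent with the sparseness of $N_1$.
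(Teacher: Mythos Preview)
Your proof is correct and takes essentially the same approach as the paper: both build a sequence $x_n = \prod_{q\le p_n} q$ along a rapidly growing sequence of primes $p_n$ (the paper writes this as $X_{2,p_n}$, which equals your $X_{1,p_n}$), and both verify that any finite product $\prod_j P_{i_j}$ equals $X_{b,\,p_{i_k}}$ for a parameter $b$ controlled by the earlier terms. The only cosmetic difference is that the paper fixes the explicit growth rate $p_n > x_{n-1}^n$ at the outset and then bounds $b$ uniformly, whereas you phrase the construction inductively, imposing finitely many constraints on $p_n$ at each stage; your final remark that $p_n > P_{n-1}^n$ suffices recovers exactly the paper's condition.
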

\begin{proof}
 Define an increasing sequence of primes $(p_n)_{n\in\mathbb{N}}$ and an increasing sequence $(x_n)_{n\in \mathbb{N}}$ in $\mathcal{X}$ 
 such that  
 \begin{align*}
  x_1&=X_{2, 2}=2, \ p_1=2,\\
  x_n&=X_{2,p_n}, \text{ where } p_n>x_{n-1}^n \text{ for } n>1. 
 \end{align*}
Let $E=FP((x_n)_{n\in\mathbb{N}}).$ To prove that $\mathcal{X}$ is a multiplicative $IP$ set, 
it suffices to show that $E\subset \mathcal{X}.$ 
 Suppose $\prod_{i=1}^{l}x_{k_i}\in E,$ where $k_i<k_{i+1}$ $\forall$ $i\in [1,l).$ If $k_l=1,$ then $l=1$ and $\prod_{i=1}^{l}x_{k_i}=x_1=X_{2, 2} \in \mathcal{X}.$
 On the other hand, if $k_l > 1,$ we have  
  $\displaystyle\prod_{i=1}^{l}x_{k_i}=X_{b,p_{k_l}}$ with
$b=X_{2,p_{k_1}}^l\displaystyle\prod_{i=2}^{l-1} \left(\displaystyle\frac{X_{2,p_{k_i}}}{X_{2,p_{k_{i-1}}}}\right)^{l+1-i} \leq X_{2,p_{k_{l-1}}}^{l}=x_{k_{l-1}}^l.$ 
  Since $k_i<k_{i+1}$ $\forall$ $i\in [1,l),$ we get that $ k_{l-1}\leq k_l-1$
 and $k_l \geq l.$ 
Then $b\leq x_{k_l-1}^{k_l}$ as $(x_n)_{n\in \mathbb{N}}$ is an increasing sequence. Since $k_l>1,$ we get $b<p_{k_l}$ and hence
 $\displaystyle\prod_{i=1}^{l}x_{k_i}=X_{b,p_{k_l}}\in \mathcal{X}.$ This completes the proof.
 \end{proof}
 
Next, we would like to know whether additive and multiplicative configurations can occur together in $N_1.$ 
Even for the simplest configuration $\{x+y, xy\}$ for $x, y \in \mathbb{N},$ we do not know the answer to this.
However, we prove that $\mathcal{X}$ doesn't contain such a pair.
\begin{proposition}\label{x+y_xy}
The set  $\mathcal{X}$ does not contain a configuration of the type $\{x+y,xy\}$ for any positive integers $x, y.$
  \end{proposition}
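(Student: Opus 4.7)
The plan is to derive a contradiction from the assumption that both $x+y$ and $xy$ lie in $\mathcal{X}$, by exploiting the rigid ``primorial times small cofactor'' description of $\mathcal{X}$ given in Remark \ref{remark1}. For notational convenience set $P(p) := \prod_{q \leq p} q$. First I dispose of the degenerate case $\min(x, y) = 1$: if $x = 1$, both $y$ and $y+1$ would lie in $\mathcal{X}$, hence both would be even (since $2 \mid P(p)$ for every prime $p$), which is impossible. So I may assume $x, y \geq 2$, and write $x + y = b_1 P(p_1)$ and $xy = b_2 P(p_2)$ with $b_i \prod_{q \in W(b_i)} q < 2 p_i$.

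A short observation shows $W(b_i) \subseteq \{q : q \leq p_i\}$: a prime factor $q > p_i$ of $b_i$ would force $b_i \prod_{q' \in W(b_i)} q' \geq q^2 > p_i^2 \geq 2 p_i$, contradicting the defining inequality. Consequently $W(x+y) = \{q : q \leq p_1\}$ and $W(xy) = \{q : q \leq p_2\}$. The key structural lever is that each prime $q \leq \min(p_1, p_2)$ divides both $xy$ and $x+y$, and therefore divides $\gcd(x, y)$.

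The argument now splits into two cases. If $p_1 > p_2$, then $\gcd(x, y) \geq P(p_2)$ and $\gcd(x, y)^2 \mid xy < 2 p_2 P(p_2)$ force $P(p_2) < 2 p_2$, leaving only $p_2 = 2$; but the admissible cofactors in that case force $xy = 2$, contradicting $x, y \geq 2$. If $p_1 \leq p_2$, I would write $d = \gcd(x, y) = P(p_1)\, d'$ with $W(d') \subseteq \{q \leq p_1\}$, $x = da$, $y = db$, $\gcd(a, b) = 1$, and decompose $P(p_2) = P(p_1)\, R$ with $R := \prod_{p_1 < q \leq p_2} q$ coprime to $P(p_1)\, d'^2$. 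The relation $d^2 \mid xy$ then yields $P(p_1)\, d'^2 \mid b_2$; in particular $W(b_2) \supseteq \{q \leq p_1\}$ and $b_2 \geq P(p_1)$, so $b_2 \prod_{q \mid b_2} q \geq P(p_1)^2$, and the defining inequality gives $p_2 > P(p_1)^2 / 2$.

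The final step is an AM--GM size comparison: $xy \leq (x+y)^2/4 < p_1^2 P(p_1)^2$, whereas $xy \geq P(p_2)$, so $P(p_2) < p_1^2 P(p_1)^2$. The case $p_1 = 2$ already fails, since the cofactor constraint forces $x + y = 2$; for $p_1 \geq 3$ we have $P(p_1)^2 / 2 \geq 18$, so $p_2 \geq 19$, and $P(19) > 9 \times 10^6$ dwarfs $p_1^2 P(p_1)^2$ (which equals $324$ for $p_1 = 3$ and grows far more slowly than $P(p_2)$). The main obstacle I anticipate is the divisibility bookkeeping in the second case that extracts $P(p_1)\, d'^2 \mid b_2$ and hence the bound $b_2 \prod_{q \mid b_2} q \geq P(p_1)^2$; once that is set up carefully, the closing size estimate is routine.
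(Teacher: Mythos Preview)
Your argument is correct, but it takes a substantially different route from the paper's. The paper avoids any case split on $p_1$ versus $p_2$: it first observes that $v_{p_2}(xy) = 1$ (in the representation $xy = X_{n_2,p_2}$ with $p_2 > n_2/2$, the prime $p_2$ occurs to exactly the first power), so $p_2$ divides exactly one of $x,y$ and hence $p_2 \nmid x+y$, forcing $p_2 > p_1$. Writing $x+y = T\cdot P(p_1)$ (your $b_1$), the paper then notes that $P(p_1)\mid x$, $P(p_1)\mid y$, and $p_2$ divides one of them, so $T \geq p_2$; but the cofactor constraint $T\prod_{q\in W(T)}q < 2p_1$ forces $T < p_1 < p_2$, an immediate contradiction. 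The whole thing is a handful of lines.

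Your approach instead works symmetrically with both cofactors, replacing the paper's single valuation observation with a case split and an analytic size comparison. The Case~1 argument ($p_1 > p_2$) via $\gcd(x,y)^2 \mid xy$ is clean. In Case~2, your key divisibility step $P(p_1)\,d'^2 \mid b_2$ is correct (for each prime $q\leq p_1$, compare $q$-valuations in $P(p_1)d'^2 ab = b_2 R$ and use $\gcd(R,P(p_1)d'^2)=1$), and it yields the strong bound $p_2 > P(p_1)^2/2$, after which AM--GM closes things out. One small point: your final clause (``grows far more slowly'') should be made explicit. For $p_1 \geq 7$ the chain $P(p_2) \geq P(p_1)\,p_2 > P(p_1)^3/2$ already exceeds $p_1^2 P(p_1)^2$ because $P(p_1) > 2p_1^2$; the cases $p_1\in\{3,5\}$ are then immediate since $p_2 \geq 19$ (resp.\ $p_2 \geq 457$) makes $P(p_2)$ astronomically larger than $324$ (resp.\ $22500$). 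What the paper's approach buys is brevity and a single unifying observation about the exact $p_2$-valuation of $xy$; what yours buys is a proof that never needs that exact valuation, relying only on the support $W(xy)$, a $\gcd$ argument, and crude growth bounds.
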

\begin{proof}
 If possible assume that $x+y, xy \in \mathcal{X}$ for some $x, y \in \mathbb{N}.$ Let $x+y=X_{n_1,p_1}$ and 
 $xy=X_{n_2,p_2}$ for some $p_1,p_2\in \mathbb{P}$ and $n_1,n_2\in\mathbb{N}$ satisfying $p_1>\frac{n_1}{2}$ and $p_2>\frac{n_2}{2}.$

 Since $xy=X_{n_2,p_2}$ with $p_2>\frac{n_2}{2},$ we have $v_{p_2}(xy)=1.$ This means that $p_2$ divides exactly one of $x$ and $y.$ Then $v_{p_2}(x+y)=0.$ Therefore $p_2>p_1$ as $x+y=X_{n_1,p_1}.$
 
 Since $p_1<p_2 ,$  we have $v_{q}(x+y)\geq 1$ and $v_q(xy)\geq 1$ for each prime 
 $q\leq p_1.$ This gives $v_q(x)\geq 1$ and $v_q(y)\geq 1$ for each prime $q\leq p_1.$ 
Then there exists $T(x,y)\in \mathbb{N}$ such that
  \begin{equation} 
  x+y=T(x,y)\displaystyle \prod_{q\leq p_1 } q, \label{T_(1)}
  \end{equation}
\begin{align}
\text{ where }T(x,y)=\left(\prod_{ q\leq p_1 } q^{v_q(x)-1}\right) \left(\prod_{p_1<q} q^{v_q(x)}\right)
+ \left(\displaystyle \prod_{q\leq p_1 } q^{v_q(y)-1}\right)\left(\prod_{p_1<q}q^{v_q(y)}\right).\nonumber\label{T_(2)}
\end{align}
Since $xy=X_{n_2,p_2},$ we have $p_2\mid x$ or $p_2\mid y$ and hence   $T(x,y)\geq  p_2$, as $p_2>p_1.$

Using the fact that $x+y=X_{n_1, p_1}\in \mathcal{X}$ along with the definition of $\mathcal{X}$ as stated in Remark \ref{remark1}, eq.\eqref{T_(1)}  gives us \begin{equation}
T(x, y)\prod_{q \in W(T(x, y))}q < 2p_1. \label{T'eq} \end{equation}
Since $T(x, y) >1,$ there exists a prime $q_0$ such that $q_0 \mid T(x, y).$ So, the inequality in  \eqref{T'eq}  gives
 $2T(x, y) \leq q_0T(x, y) < 2p_1$ i.e. $T(x, y) < p_1<p_2,$ a contradiction to  the fact that $T(x,y)\geq p_2.$

Hence, there exist no $x, y \in \mathbb{N}$ such that $x+y, xy \in \mathcal{X}.$
 \end{proof}
Now, we look for arithmetic and geometric progressions in $N_1.$
By Propopsition \ref{piecewise_syndetic_N_1} and a result in  \cite{hindman} stated below, we conclude that $N_1$ contains 
arbitrarily long arithmetic and geometric progressions.
\begin{proposition}[Hindman \cite{hindman}, p. 361]
 Let $A$ be a multiplicatively piecewise syndetic subset of $\mathbb{N}$ and $k\in\mathbb{N}.$ Then there exists 
 $a,d\in \mathbb{N}$ and $r\in \mathbb{N}\setminus \{1\}$ such that \{$r^j(a+id)\colon i,j\in\{0,1,\dots,k\}\}\cup \{dr^j\colon j\in\{0,1,\dots,k\}\}\subset A.$
\end{proposition}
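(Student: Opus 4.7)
The plan is to reduce the statement to a Ramsey-type fact about finite colorings of multiplicatively thick sets, and then derive that fact from a Central Sets Theorem in $(\beta\mathbb{N}, \cdot)$ that mixes additive and multiplicative structure.

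Unfolding the definition, multiplicative piecewise syndeticity of $A$ gives a finite set $S = \{t_1, \dots, t_m\} \subset \mathbb{N}$ such that $B := \bigcup_{i=1}^{m} t_i^{-1}A$ is multiplicatively thick. I would define a coloring $\chi \colon B \to \{1, \dots, m\}$ by $\chi(n) := \min\{i : t_i n \in A\}$. The crucial \emph{covariance} is that for any $t \in \mathbb{N}$ the map $n \mapsto tn$ sends a configuration $\{r^j(a+id) : 0 \leq i, j \leq k\} \cup \{dr^j : 0 \leq j \leq k\}$ to $\{r^j(ta + i(td))\} \cup \{(td)r^j\}$, a configuration of the same form with parameters $(ta, td, r)$. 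Hence it is enough to find a $\chi$-monochromatic copy of the required configuration inside $B$; multiplying by the corresponding $t_i \in S$ then transports it into $A$.

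Multiplicative thickness says that for every finite $C \subset \mathbb{N}$ there exists $x \in \mathbb{N}$ with $xC \subseteq B$. Hence the task collapses to a purely finitary lemma: for every $k, m \in \mathbb{N}$ there is a finite $C \subset \mathbb{N}$ such that any $m$-coloring of $C$ contains a monochromatic copy of $\{r^j(a+id) : 0 \leq i,j \leq k\} \cup \{dr^j : 0 \leq j \leq k\}$ with $r \geq 2$. I would prove this lemma by passing to $\beta\mathbb{N}$: a set is multiplicatively piecewise syndetic iff its closure meets the smallest two-sided ideal $K(\beta\mathbb{N}, \cdot)$. So after replacing $A$ by some translate $t^{-1}A$ we may assume $A$ is a member of a minimal idempotent $p = p \cdot p \in K(\beta\mathbb{N}, \cdot)$, i.e.\ $A$ is \emph{multiplicatively central}. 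A polynomial/mixed version of the Central Sets Theorem (in the style of Bergelson and McCutcheon) then produces $a, d \in \mathbb{N}$ and $r \geq 2$ such that the entire advertised configuration lies in $A$.

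The main obstacle is this mixed-structure step: the classical Central Sets Theorem is a statement about a single semigroup, while our target configuration combines an arithmetic progression $a + id$ (additive) with geometric scalings $r^j$ (multiplicative). Resolving this requires an idempotent ultrafilter that is both minimal in $(\beta\mathbb{N}, \cdot)$ and additively rich enough to generate long APs; such ultrafilters exist, but the construction and the extraction of the specific shape $\{r^j(a+id)\} \cup \{dr^j\}$ — in particular, recovering the auxiliary tail $\{dr^j\}$ with the \emph{same} $d$ and $r$ as in the AP grid — is the delicate combinatorial heart of the argument. The first two paragraphs are routine covariance and pigeonhole packaging; all the real work lives in this last step.
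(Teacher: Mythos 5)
The paper does not prove this proposition; it is imported verbatim from Hindman and Strauss \cite{hindman}, so there is no internal proof to compare your attempt against. On its own terms, your first two paragraphs are a sound (if slightly circular) set-up: the covariance under dilation and the translation to thickness are standard, though once you decide to work in $\beta\mathbb{N}$ the intermediate ``finitary lemma'' reduction becomes superfluous, since the ultrafilter argument handles the infinite set directly. The step from multiplicative piecewise syndeticity to ``some dilate $t^{-1}A$ is multiplicatively central'' is also a correct standard fact.

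The genuine gap is exactly where you flag it, and it is not a small one. You invoke ``a polynomial/mixed version of the Central Sets Theorem'' to deliver the entire configuration $\{r^j(a+id)\colon 0\le i,j\le k\}\cup\{dr^j\colon 0\le j\le k\}$ from a multiplicatively central set, but no Central Sets Theorem for a single semigroup produces this shape: the AP grid is additive, the geometric scalings are multiplicative, and the side condition that the \emph{same} $d$ and $r$ appear in both the grid and the tail $\{dr^j\}$ is precisely what rules out a one-line appeal to an off-the-shelf theorem. In Hindman--Strauss this geoarithmetic statement is a theorem in its own right, proved by exploiting the interaction between the additive and multiplicative structures on $\beta\mathbb{N}$ (in particular, relations between the closures of $K(\beta\mathbb{N},+)$ and $K(\beta\mathbb{N},\cdot)$), and the argument there is several pages. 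As written, your proposal is an accurate map of the terrain but not a proof: the ``delicate combinatorial heart'' you identify is the entire mathematical content of the result, and you leave it unproved.
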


\begin{proposition}\label{geo}
$N_1$ contains arbitrarily long arithmetic, geometric and geo-arithmetic progressions.
\end{proposition}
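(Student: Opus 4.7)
The plan is to derive this proposition as an almost immediate corollary of the two results that immediately precede it, namely Proposition \ref{piecewise_syndetic_N_1} (establishing that $N_1$ is multiplicatively piecewise syndetic) and the cited theorem of Hindman. Since Hindman's theorem concludes with the existence of $a,d\in\mathbb{N}$ and $r\in\mathbb{N}\setminus\{1\}$ such that the combined set $\{r^j(a+id):i,j\in\{0,1,\dots,k\}\}\cup\{dr^j:j\in\{0,1,\dots,k\}\}$ lies inside $A$, the entire proof reduces to identifying the three claimed configurations inside this set.

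Concretely, I would take an arbitrary $k\in\mathbb{N}$, apply Hindman's proposition with $A=N_1$ (which is legitimate by Proposition \ref{piecewise_syndetic_N_1}), and then extract three subsets of the guaranteed inclusion. Specializing to $j=0$ gives $\{a,a+d,a+2d,\dots,a+kd\}\subset N_1$, which is an arithmetic progression of length $k+1$. Specializing to $i=0$ gives $\{a,ar,ar^2,\dots,ar^k\}\subset N_1$, which is a geometric progression of length $k+1$ (with common ratio $r\geq 2$). Finally, the full doubly-indexed family $\{r^j(a+id):i,j\in\{0,1,\dots,k\}\}\subset N_1$ is by definition a geo-arithmetic progression in the sense intended: along each row (fixed $j$) it is an arithmetic progression with common difference $r^j d$, and along each column (fixed $i$) it is a geometric progression with common ratio $r$.

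Since $k$ was arbitrary, this simultaneously proves all three statements. There is no substantive obstacle; the only step that deserves mention is ensuring that Hindman's formulation is invoked correctly (the common ratio $r$ is automatically nontrivial because it is required to lie in $\mathbb{N}\setminus\{1\}$, which is precisely what makes the geometric progressions nondegenerate). Thus the proof is a one-line deduction plus three trivial restrictions of indices, and no further lemmas about the arithmetic of sparsely totient numbers are needed beyond Proposition \ref{piecewise_syndetic_N_1}.
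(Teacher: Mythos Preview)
Your proposal is correct and follows exactly the paper's approach: the paper derives Proposition~\ref{geo} immediately from Proposition~\ref{piecewise_syndetic_N_1} together with Hindman's theorem, without any additional argument. Your explicit extraction of the arithmetic, geometric, and geo-arithmetic progressions from the conclusion of Hindman's theorem is in fact slightly more detailed than the paper, which simply states the deduction.
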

Using the ideas of Proposition \ref{XIp0}, we will now give explicit examples of infinite families of arithmetic and 
geometric progressions in $N_1$ via $\mathcal{X}.$ Suppose $\{m_1, m_2, \dots, m_n\}$ is an arithmetic (or geometric) progression of length $n$ in
$\mathbb{N}.$ Let $b_i=m_i \prod_{q\in W(m_i)}q$ for each $1\leq i\leq n$ and let $p>\max\{\frac{b_i}{2}\colon 1\leq i\leq n\}.$
Then $\{X_{b_i, p}\}_{i=1}^{n}$ is an arithmetic (or geometric) progression in $\mathcal{X}$ of length $n.$

Combining Propositions \ref{piecewise_syndetic_N_1}, \ref{XIp0}, \ref{additive_IP},  \ref{multiplicative_IP}, \ref{x+y_xy} and \ref{geo}, we get Theorem \ref{t5}.

\section{\textbf{Questions}}\label{section_question}
 In this section, we state some problems on sparsely totient numbers  arising from the present work. 
 
For a prime $p,$ define $TN_1(p) :=\inf\{m\in V\colon k> m\Rightarrow p\mid N_1(k)\}.$  For example, $TN_1(2)=1$ and
Corollary \ref{t1corr1} gives $TN_1(3)=2.$ Theorem \ref{t1} ensures the existence of $TN_1(p)$ for each $p$. One can observe that
$TN_1(p)=\max\{m\in V\colon p\nmid N_1(m)\}.$
 Proposition \ref{p1} and Proposition \ref{p2} give us that if $p\nmid N_1(k),$ then $v_q(N_1(k))=0$ for $q>p(p-1)$, $v_q(N_1(k))\leq 2$ for $p<q\leq p(p-1),$
and $v_q(N_1(k))\leq D_p(q)$ for $q<p$, where $D_p(q)$ is independent of $k.$ Here we do not know quantitative information
about $D_p(q)$. A knowledge of this will give an explicit upper bound of $TN_1(p)$. These observations 
give rise to the following problems:

\begin{question}\label{sectionq1}
 For a prime $p,$ what is the value of $TN_1(p)$?
\end{question}
\begin{question}
 Let $p$ and $q$ be primes such that $q<p$. If $N$ is a sparsely totient number such that each prime factor of $N$ is
 less than $p,$ then find an explicit upper bound for $v_q(N)$.
\end{question}

Let  $m_1, m_2,\dots, m_k, \dots $ be the  
enumeration of  the elements of the set $BN_1,$ where $BN_1=\{m \in V : N_1(m) = \max(\phi^{-1}(m))\}$.
From the table of values of $N_1(m)$ below, it seems that $N_1(m_k)> k^2$ for each $k.$
Can we prove  $N_1(m_k)> k^{1+\epsilon},$ where $\epsilon$ is a small positive real number. This would also mean that $\sum_{m\in BN_1}(N_1(m))^{-1}$ is convergent. 
\begin{center}\begin{tabular}{|c|c|c|c|c|c|c|c|c|c|c|c|c|c|c|c|c|c|}
 \hline
 $k$& 1&2 &3 &4 &5 &6 &7 & 8&9 &10 &11 &12 &13    \\
 \hline
 $N_1(m_k)$&2& 6&12 &18&30 &42&60 &66&90 & 120&126 &150 &210    \\
 \hline
 $k^2 $&1 &4 & 9& 16& 25& 36& 49& 64& 81& 100& 121& 144& 169  \\
 \hline
\end{tabular}\end{center}
Hence we ask the following question.
\begin{question}\label{sectionq2}
Does there exist $\epsilon>0$ such that $N_1(m_k)>k^{1+\epsilon}$ for each $k\in\mathbb{N}$?
Does $\sum_{m\in BN_1}(N_1(m))^{-1}$ converge?\end{question}
As discussed in Section \ref{introduction} via observations from  Figure \ref{plot}, we raise the following two questions:  
\begin{question} \label{sectionq3}
 Let $p,p_1$ and $p_2$ be consecutive primes for each $p\in\mathbb{P}.$ Does the fractional part of $\frac{p_1p_2}{p}$ tend to $0$ as $p\rightarrow \infty$?  
\end{question}

 \begin{question}\label{sectionq4}
 Is $\mathcal{Y}_2=\{Y_{p,2}\colon p\in \mathbb{P}, p\geq 11\}$? Is the set $\mathcal{Y}_3$ infinite? 
 \end{question}
 
As we have seen in Proposition \ref{x+y_xy}, $\mathcal{X}$ doesn't contain a configuration of the type $\{x+y, xy\}.$ So, the question
arises whether such a configuration is possible in other families of subsets of $N_1.$ More generally,
\begin{question}\label{sectionq5}
 Does $N_1$ contain the set $\{x+y, xy\}$ for some $x, y \in \mathbb{N}$?
\end{question}
  
   To the best of the authors' knowledge, all the above problems are open. 

\section*{Acknowledgement}
We would like to thank the Department of Atomic Energy, Government of India for the financial support. 
In addition, the research of authors Bhuwanesh Rao Patil and Pramod Eyyunni was also supported by the `INFOSYS scholarship for senior students'.
We would also like to thank Harish-Chandra Research Institute for the excellent facilities.
We sincerely thank  Samrat Kadge for help with the 
computation and programming. We thank the referee for a  careful reading of this paper and suggestions.


\begin{thebibliography}{99}

\bibitem{gupta}  H. Gupta, 
{Euler's totient function and its inverse}, 
\emph{Indian J. Pure Appl. Math.}, \textbf{12} (1981), no. 1, 22-30.

\bibitem{hindman} N. Hindman and D. Strauss, 
\emph{Algebra in the Stone-\v{C}ech compactification. Theory and applications},
 Walter de Gruyter Co. (2012).



\bibitem{masser} D. W. Masser and P. Shiu,
{On sparsely totient numbers},
\emph{Pacific J. Math.}, \textbf{121} (1986), no. 2, 407-426.


\bibitem{nagura} J. Nagura, 
{On the interval containing at least one prime number},
\emph{Proc. Japan Acad.}, \textbf{28} (1952), 177-181. 

\bibitem{narkiewicz} W. Narkiewicz,
\emph{The development of prime number theory. From Euclid to Hardy and Littlewood},
Springer-Verlag (2000).

\bibitem{Zhang} Yitang Zhang, {Bounded gaps between primes},
\emph{Ann. of Math.(2)}, \textbf{179} (2014), no. 3, 1121-1174. 


\end{thebibliography}
\end{document}